\documentclass[11pt]{article}
\usepackage{amssymb, authblk}
\usepackage{amsmath, bbm}
\usepackage{fullpage}
\usepackage{amsthm, natbib, hyperref}
\usepackage{graphicx}
\usepackage{verbatim}

\usepackage{algorithm}
\usepackage{algpseudocode}
\usepackage[dvipsnames]{xcolor}
 \usepackage{tikz}
\usetikzlibrary{arrows.meta}
\usetikzlibrary{decorations.pathreplacing}

\algnewcommand\algorithmicinput{\textbf{INPUT:}}
\algnewcommand\INPUT{\item[\algorithmicinput]}
\algnewcommand\algorithmicoutput{\textbf{OUTPUT:}}
\algnewcommand\OUTPUT{\item[\algorithmicoutput]}

\usepackage{hyperref}[]
\hypersetup{
    colorlinks=true,
    linkcolor=blue,
    filecolor=magenta,      
    urlcolor=cyan,
      citecolor=blue,
    }

\usepackage{cleveref}

\newtheorem{theorem}{Theorem}
\newtheorem{lemma}[theorem]{Lemma}

\newtheorem{definition}{Definition}
\newtheorem{remark}{Remark}
\newtheorem{assumption}{Assumption}

\newcommand{\tf}{\widetilde f^{s,e}}
\newcommand{\p}{\mathcal P}

\newcommand{\kse}{\kappa^{s,e}_{\max}}

\allowdisplaybreaks

\DeclareMathOperator*{\argmin}{arg\,min}
\DeclareMathOperator*{\argmax}{arg\,max}

\date{\vspace{-5ex}}

\title{Univariate Mean Change Point Detection: \\Penalization, CUSUM and Optimality}
\author[1]{Daren Wang}
\author[2]{Yi Yu}
\author[3]{Alessandro Rinaldo}

\affil[1]{\small Department of Statistics, University of Chicago}
\affil[2]{\small School of Mathematics, University of Bristol}
\affil[3]{\small Department of Statistics and Data Science, Carnegie Mellon University}

\begin{document}
\maketitle
\begin{abstract}
The problem of univariate mean change point detection and localization based on a sequence of $n$ independent observations with piecewise constant means has been intensively studied for more than half century, and serves as a blueprint for change point problems in more complex settings. We provide a complete characterization of this classical problem in a general framework in which the upper bound $\sigma^2$ on the noise variance, the minimal spacing $\Delta$ between two consecutive change points and the minimal magnitude $\kappa$ of the changes, are allowed to vary with $n$. We first show that consistent localization of the change points, when the signal-to-noise ratio $\frac{\kappa \sqrt{\Delta}}{\sigma} < \sqrt{\log(n)}$, is impossible. In contrast, when $\frac{\kappa \sqrt{\Delta}}{\sigma}$ diverges with $n$ at the rate of at least $\sqrt{\log(n)}$,  we demonstrate that two computationally-efficient  change point estimators, one based on the solution to  an $\ell_0$-penalized least squares problem and the other on the popular wild binary segmentation algorithm, are both consistent and achieve a localization rate of the order $\frac{\sigma^2}{\kappa^2} \log(n)$. We further show that such rate is minimax optimal,  up to a $\log(n)$ term.


\vskip 5mm
\textbf{Keywords}: Change point detection; Minimax optimality; $\ell_0$-penalization;  CUSUM statistics; Binary segmentation; Wild binary segmentation. 
\end{abstract}

\section{Introduction}
\label{section:intro}

Research on change point detection in time series data has a relatively long history in modern statistics, covering both online \citep[e.g.][]{Wald1945, Page1954, JamesEtal1987} and offline \citep[e.g.][]{vostrikova1981detection, YaoAu1989} search problems.  It has been recently going through a renaissance due to the routinely collected complex and large amount of data sets in the `Big Data' era.  Change point detection problems in high-dimensional means \citep[e.g.][]{ChoFryzlewicz2015, Cho2015, AstonKirch2014, Jirak2015, wang2016high}, in covariance structures \citep[e.g.][]{AueEtal2009, avanesov2016change, WangEtal2017}, in dynamic networks \citep[e.g.][]{GibberdRoy2017, WangEtal2018}, and in sequentially-correlated time series \citep[e.g.][]{Lavielle1999, DavisEtal2006, AueEtal2009} have been actively studied in recent years.  

Arguably, the simplest and best-studied change point detection problem is on univariate mean from independent observations.  It is fair to say that this is the most important ingredient in more complex problems.  We formalize the model in \Cref{assume:change}.

\begin{assumption}[Model]\label{assume:change}

Let $Y_1, \ldots, Y_n \in \mathbb{R}$ be independent sub-Gaussian random variables with continuous density such that $\mathbb{E}(Y_i) = f_i$ and $\|Y_i\|_{\psi_2} \leq \sigma$ for all $i \in \{1, \ldots, n\}$.  

Let $\{\eta_k\}_{k=0}^{K+1} \subset \{0, \ldots, n\}$ be a collection of change points such that $0 = \eta_0 < \eta_1 < \ldots < \eta_K <  \eta_{K+1} = n$ and 
	\[
		f_{\eta_{k} +1} = \ldots =  f_{\eta_{k+1}}, \quad \mbox{for all } k = 0, \ldots, K.
	\]

Assume the minimal spacing $\Delta$ and the jump size $\kappa$ satisfy
	\[
		\min_{k = 1, \ldots, K+1} \bigl\{\eta_k - \eta_{k-1}\bigr\} \geq \Delta > 0,
	\]	
	and
	\[
		\min_{k = 1, \ldots, K+1} \bigl|f_k - f_{k-1}\bigr| = \min_{k = 1, \ldots, K+1} \kappa_k = \kappa > 0.
	\]

\end{assumption} 

\begin{remark}
In fact we do not need the condition that $Y_i$'s have continuous densities.  We include it here for simplicity, such that we do not need to consider the event in which that two sets of random variables have the same sample mean.  This is the only place this condition is used.
\end{remark}

The model is completely characterized by the sample size $n$, the upper bound $\sigma$ on the fluctuations in terms of Orlicz-$\psi_2$-norm\footnote{For any random variable $X$, let $\|X\|_{\psi_2}$ be its Orlicz-$\psi_2$ norm, i.e.
	\[
	\|X\|_{\psi_2} = \sup_{k \geq 1} \bigl\{\mathbb{E}\bigl(|X|^k\bigr)\bigr\}^{1/k}.
	\]}, the minimal spacing $\Delta$ between two consecutive change points and the lower bound $\kappa$ of the jump size in terms of the absolute value of the difference between two consecutive population means.  All three parameters $\sigma$, $\Delta$ and $\kappa$ are allowed to change as $n$ grows.  Since the number of change points $K$ is upper bounded by $n/\Delta$, we will not keep track of $K$, as its upper bound can be derived from the other parameters.

The goal of a change point detection problem is to obtain {\bf consistent} change point estimators $\{\hat{\eta}_k\}_{k = 1}^{\widehat{K}}$, with $\widehat{\eta}_1 < \widehat{\eta}_2 < \ldots < \widehat{\eta}_{\widehat{K}}$, such that
	\begin{equation}\label{eq:eta}
	\widehat{K} = K \quad \mbox{and} \quad \max_{k = 1, \ldots, \widehat{K}} \bigl|\hat{\eta}_k - \eta_k\bigr| 
	\leq \epsilon(n) = \epsilon,
	\end{equation}
	where $\epsilon/n \to 0$, with probability tending to 1 as $n \to \infty$.  In the rest of the paper, we will refer to the sequence $\epsilon/n$ as  the {\bf localization rate}.  Notice that the inequality in \eqref{eq:eta} can be seen as providing an upper  bound on the Hausdorff distance between $\{ \eta_k\}_{k=1}^K$ and $\{ \widehat{\eta}_k\}_{k-1}^{\widehat{K}}$, both viewed as subsets of $\{1,\ldots,n\}$; see \eqref{hausdorf} below.
	
In order to quantify the difficulty of the problem, we rely on the quantity 
	\begin{equation}\label{eq-def-SNR}
		\kappa\sqrt{\Delta}/\sigma, 
	\end{equation}
	which can be thought of  as measuring the {\bf signal-to-noise ratio}.  As we will see, the intrinsic statistical hardness of the change point detection and localization problems is fully captured by this quantity. In particular, the difficulty of the problem increases as $\kappa$ and $\Delta$ decrease, and $\sigma$ increases.  \Cref{eq-def-SNR} is rooted in two-sample mean testing (with known variance), resembling $t$-statistics used therein, and has counterparts in high-dimensional mean, covariance and network change point detection problems \citep[e.g.][]{wang2016high, WangEtal2017, WangEtal2018}. 
	
With the previously defined localization rate and signal-to-noise ratio, the optimality of the estimators possesses two aspects.
	\begin{itemize}
		\item [(i)]	Consistency. The first natural question one might ask is under what conditions localization is itself possible. We tackle this problem by identifying combinations of the model parameters, which we express using the signal-to-noise ratio  \eqref{eq-def-SNR}, for which no estimator of the change points is guaranteed to be consistent, in a minimax sense. 
		\item [(ii)] Outside the region of impossibility identified in the previous step, the second natural question is to derive a lower bound on the localization rate that holds for any estimator. Once the information-theoretic lower bound is established, one may then proceed to demonstrate a computationally-efficient algorithm whose localization rate matches such lower  bound. This algorithm is therefore minimax optimal.   
	\end{itemize}
	
We would like to point out that the phase transition phenomenon in terms of signal-to-noise ratio for the localization that we demonstrate below in Section~\ref{section:lb} has been shown previously found in the literature.  For instance, Theorem~1 in \cite{chan2013} showed a phase transition for testing the presence of a single change point that matches the one we obtain for localization.   \cite{FrickEtal2014} have further generalized this type of detection results to allow for an unbounded number of change points.  In term of localization, Theorem~2.8 in \cite{FrickEtal2014} has also provided a localization error rate that match the minimax rate we derive in this paper.  Similar results can also be found in other papers including \cite{dumbgen2001multiscale}, \cite{dumbgen2008multiscale}, \cite{LiEtal2017}, \cite{jeng2012simultaneous}, \cite{enikeeva2018bump}, to name but a few.

In this article we will be focusing on two types of change point estimators, one based on penalized least squares and the other on CUSUM statistics. Both types of estimator have been thoroughly studied. 

\begin{itemize}
	\item  
		There exist several results and algorithms for change point detection using $\ell_0$ penalization, including  \cite{LiebscherWinkler1999}, \cite{FriedrichEtal2008}, \cite{BoysenEtal2009} and \cite{KillickEtal2012}.   It is worth comparing three papers providing theoretical results based on $\ell_0$-penalization methods.  \cite{LavielleMoulines2000} studied the $\ell_0$-penalization approach under general distributions, and showed that if one chooses the penalization parameter $\lambda$ properly, then one would get similar asymptotic results to the case where the model assumes Gaussian noise.  The closest-related result there is Theorem~9, which only showed asymptotic results.  In this paper, we obtain the lower bounds based on Gaussian noise, but the upper bounds are achieved for sub-Gaussian noise, and provide non-asymptotic results.   \cite{BoysenEtal2009} studied consistent estimation of a general class of functions  based on the solution of the $\ell_0$ least squares problem given in \Cref{eq-uhat} below, which they referred to as the Potts functional. In particular, under the assumption that the mean function is piecewise-constant with a fixed number of change points, the authors show that a solution to \eqref{eq-uhat} can consistently locate the change points if the minimal spacing satisfies $\Delta=cn$ for some $0<c<1$ and the change size $\kappa$ is a constant.  We extend such results by allowing all the parameters in the model -- namely $\kappa$, $\Delta$ and $\sigma$ -- to change with $n$ at a nearly minimax rate, and will demonstrate the existence of a phase transition in the space of model parameters. Furthermore, our analysis is non-asymptotic. \cite{FanGuan2017} studied the $\ell_0$-denoising on a general class of graphs including chains, i.e. piecewise-constant time series signals, and provided a number of information-theoretic results.  Our paper and theirs have different targets -- we focus on the change point localization but theirs focus on prediction, which are complementary to each other.  
		
		There are also a number of papers in 1980's studying the univariate mean change point detection problem from the least squares estimators perspective, for instance, \cite{YaoDavis1986}, \cite{Yao1988}, \cite{YaoAu1989}.  The change point estimators are derived from least squares estimators, and the number of change points are chosen via the Schwarz' information criterion.  It can be shown \citep[e.g.][]{TickleEtal2018} that the Schwarz' information criterion is asymptotically equivalent to the $\ell_0$ penalization.  Note that the results obtained there are asymptotic, while ours are non-asymptotic and allow all parameters to vary as the sample size $n$.  Another related area is the reduced isotonic regression problem, which assumes the monotonic signal is piecewise-constant and which aims to recover the signal.  \cite{GaoEtal2017} has shown an iterated logarithmic lower bound when there are multiple change points.  Despite the close connection, the focus and results thereof are different from ours.
		
		It is worth mentioning that $\ell_0$-penalization method is appealing from the computational aspect, at least in the univariate case.  \cite{FriedrichEtal2008} showed that \eqref{eq-uhat} can be computed using dynamic programming and its computational cost is of order $O(n^2)$.  \cite{KillickEtal2012} introduced the pruned exact linear time (PELT) method, which has the worst case computational cost of order $O(n^2)$; while in the situations where the number of change points increases linearly with $n$, the expected time of PELT is of order $O(n)$.  There are also other algorithms, including \cite{Rigaill2010} and \cite{MaidstoneEtal2017}, which have been shown to have an expected cost which is smaller than that of PELT, but which have the worst case cost also of order $O(n^2)$.

	\item The CUSUM (see \Cref{def-cusum}) is short for the cumulative sums, proposed in \cite{Page1954} for an online change point problem, and has been a cornerstone in numerous change point detection methods.  We will show in \Cref{section:bs} that in the univariate situation, it is identical to the likelihood ratio test statistics to test whether or not there exists a change point.  Binary segmentation (BS) \citep[e.g.][]{ScottKnott1974, vostrikova1981detection} based on CUSUM statistics has been shown to be consistent, yet optimal, in locating the change points.  In the last few years, a considerable amount of efforts have been made into developing variants of BS in order to handle multiple change points scenarios, see e.g. \cite{fryzlewicz2014wild}, \cite{BaranowskiEtal2016} and \cite{EichingerKirch2018}.
	
	An important reference is \cite{fryzlewicz2014wild}, who put forward the wild binary segmentation (WBS) algorithm,  is a variant of BS, and provide an analysis of its performance. Unfortunately, the proof Theorem 3.2 in that reference suffers from critical errors. In this paper we rectify those issues and present a more comprehensive analysis of WBS that keeps track explicitly of all the relevant parameters and, in particular, allows to conclude  that the localization rate afforded by WBS is nearly minimax rate optimal. Although our efforts in this regard are non-trivial, we acknowledge that the results we derive in \Cref{section:bs} and the proofs in \Cref{sec:app-3} borrow heavily from \cite{fryzlewicz2014wild}.  As a result, we provide optimal results with all parameters being allowed to change with $n$ and weaker conditions.
\end{itemize}

The univariate mean change point detection problem has been studied intensively, and we are aware that the results in this paper have been produced in different forms in existing literature.  However, we still see the need to produce this paper merely focusing on this simple scenario, providing systematical analysis on various theoretical points, which can be served as benchmarks in more modern challenges.  

We summarize our contributions as follows.
	\begin{itemize}
		\item [(i)] We describe a phase transition in space of the model parameter that separates parameter combinations for which consistent change point estimation is impossible (in a minimax sense) from those for which there exist algorithms that are provably consistent. Furthermore, we provide a global information-theoretic lower bound on the localization rate that holds over most of the region of the parameter space for which consistent estimation is possible. It is worth pointing out that this same phrase transition could be indirectly deduced from the existing literature on minimax change point detection and on change point localization for univariate piecewise signals. See, in particular, \cite{chan2013} and \cite{FrickEtal2014}.  Here we provide a direct proof of this phenomenon based on formal minimax arguments.

		\item [(ii)] We demonstrate that the $\ell_0$-penalization method produces a minimax rate-optimal estimator of the change points.  
		In addition, we demonstrate that the localization error rate of $\ell_0$-penalization method is locally adaptive to the  jump size at each change point, a desirable feature both in theory and in practice (see \Cref{remark:local adaptive}). 

		\item [(iii)] Among CUSUM-based methods, we show that the WBS algorithm put forward by \cite{fryzlewicz2014wild} is also minimax rate-optimal.  While our analysis of the WBS is heavily inspired by the proof techniques in \cite{fryzlewicz2014wild}, we are able to provide more refined results  with optimal tracking of the underlying parameters, thus obtaning optimak rates. We also require weaker conditions than in \cite{fryzlewicz2014wild}.  
	\end{itemize}

The  paper is organized as follows.  The information-theoretic results are exhibited in \Cref{section:lb}.  Matching upper bounds provided by $\ell_0$-penalization method and WBS can be found in Sections~\ref{section:potts} and \ref{section:bs}, respectively.  Most of the proofs and technical details are in the Appendices.



\section{Phase Transition and Optimality Minimax Rates}
\label{section:lb}

Recall the two aspects of optimality we describe in \Cref{section:intro}: to identify parameter combinations for which  consistent localization is possible and to determine a minimax lower bound on the localization rate.  In \Cref{lemma-low-snr} we describe the low signal-to-noise ratio regime for which estimating the location of the change points cannot be done. In detail,  we show that if
	\begin{equation}\label{eq-low-snr}
		\kappa\sqrt{\Delta}/\sigma < \sqrt{\log(n)},
	\end{equation}
	then no consistent estimator of the locations of the change points exists. On the other hand, when $\kappa\sqrt{\Delta}/\sigma \geq \sqrt{\log(n)}$, \Cref{lemma-error-opt} demonstrates a minimax lower bound on the localization rate of the form $\frac{\sigma^2}{\kappa^2 n}$, for all $n$ large enough. 
The analysis of the localization  procedures described in Sections~\ref{section:potts} and \ref{section:bs} will confirm that these results are in fact quite sharp. Specifically, we will verify both the existence of a phase transition for the localization task as the  signal-to-noise ratio crosses the threshold $\sqrt{\log(n)}$, as prescribed by \Cref{lemma-low-snr},  and the near minimax optimality of the lower bound of  \Cref{lemma-error-opt}.

Below, for two subsets $E_1$ and $E_2$ of $\{1,\ldots,n\}$, we let 
\begin{equation}\label{hausdorf}
H(E_1,E_2 ) = \max \Big\{  \max_{x \in E_1} \min_{y \in E_2} | x-y|, \max_{y \in E_1} \min_{x \in E_2} | x-y|\Big\}
\end{equation}
denote their Hausdorff distance. 
		
\begin{lemma}\label{lemma-low-snr}
Let $\{Y_i\}_{i=1}^n$ be a time series satisfying \Cref{assume:change} and let $P^n_{\kappa, \Delta, \sigma}$ denote the corresponding joint distribution.  For any $0 < c < 1$, consider the class of distributions
	\[
		\mathcal{P}^n_{c} = \left\{P^n_{\kappa, \Delta, \sigma}: \, \Delta = \min \left\{\left\lfloor c\frac{\log(n)}{\kappa^2/\sigma^2}\right\rfloor, \left\lfloor \frac{n}{4} \right\rfloor\right\} \right\}.
	\]
	Then, there exists an $n(c)$, which depends on $c$, such that, for all $n$ larger than $n(c)$, 
	\[
		\inf_{\hat{\eta}} \sup_{P \in \mathcal{P}^n_c} \mathbb{E}_P\bigl( H(\hat{\eta}, \eta(P) ) \bigr) \geq \frac{n}{8},
	\]
	where the infimum is over all estimators $\widehat{\eta} = \{ \widehat{\eta}_k\}_{k=1}^{\widehat{K}}$ of the change point locations and $\eta(P)$ is the set of locations of the change points of $P \in  \mathcal{P}^n_c$.
\end{lemma}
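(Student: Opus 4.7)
The plan is to apply Le Cam's method with two mixture priors, exploiting the fact that when $c<1$ a mixture of single-bump distributions is statistically indistinguishable (in total variation) from pure Gaussian noise, even though each individual bump is relatively easy to detect. First I would reduce to a convenient parameter regime: set $\kappa=\sigma=1$ so that, for $n$ large enough, $\Delta=\lfloor c\log n\rfloor\leq\lfloor n/4\rfloor$, placing us in the first branch of the minimum defining $\mathcal{P}^n_c$. All distributions I construct will belong to $\mathcal{P}^n_c$ with these parameters.

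Next I would build the two priors. Fix $N=\lfloor n/(32\Delta)\rfloor$ and choose $L_0\subseteq[\Delta,n/8]$ and $L_1\subseteq[7n/8,n-2\Delta]$, each a set of $N$ ``bump start-positions'' separated by $2\Delta$, so that the bumps they index are pairwise disjoint. For each $j\in L_0\cup L_1$, let $P_j$ denote the joint law of an independent Gaussian sequence with unit variance whose mean equals $1$ on $(j,j+\Delta]$ and $0$ elsewhere; each $P_j$ lies in $\mathcal{P}^n_c$ (the two change-points $\{j,j+\Delta\}$ satisfy the required spacing, and the jump sizes equal $\kappa=1$). Let $\pi_i$ be the uniform prior on $\{P_j\}_{j\in L_i}$ and write $m_i$ for the resulting data marginal.

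Two computations drive the proof. First, the minimum Hausdorff separation between the two hypothesis classes is
\[
\omega\;:=\;\min_{j_0\in L_0,\,j_1\in L_1}H\bigl(\{j_0,j_0+\Delta\},\{j_1,j_1+\Delta\}\bigr)\;=\;\min(j_1-j_0)\;\geq\;\tfrac{3n}{4}.
\]
Second, writing $P_0^*=\mathcal{N}(0,I_n)$ and $\mu_j\in\mathbb{R}^n$ for the mean vector of $P_j$, the standard Gaussian identity $\mathbb{E}_{P_0^*}[(\mathrm{d}P_j/\mathrm{d}P_0^*)(\mathrm{d}P_{j'}/\mathrm{d}P_0^*)]=\exp(\langle\mu_j,\mu_{j'}\rangle)$ combined with the disjointness of the bumps (so $\langle\mu_j,\mu_{j'}\rangle=0$ for distinct $j,j'\in L_i$) yields
\[
\chi^2\bigl(m_i\,\|\,P_0^*\bigr)\;=\;\frac{e^{\Delta}-1}{N}\;=\;O\bigl(\log(n)\,n^{c-1}\bigr),
\]
which tends to $0$ as $n\to\infty$ because $c<1$. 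Combining the triangle inequality with the standard bound $d_{\mathrm{TV}}(\mu,\nu)\leq\tfrac12\sqrt{\chi^2(\mu\|\nu)}$ gives $d_{\mathrm{TV}}(m_0,m_1)\leq\sqrt{\chi^2(m_i\|P_0^*)}\leq\tfrac13$ for all $n$ sufficiently large (depending on $c$).

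Finally I would apply Le Cam's two-prior lower bound: for any estimator $\hat\eta$,
\[
\sup_{P\in\mathcal{P}^n_c}\mathbb{E}_P\bigl[H(\hat\eta,\eta(P))\bigr]\;\geq\;\tfrac12\bigl(\mathbb{E}_{\pi_0}+\mathbb{E}_{\pi_1}\bigr)\bigl[H(\hat\eta,\eta(P))\bigr]\;\geq\;\frac{\omega}{4}\bigl(1-d_{\mathrm{TV}}(m_0,m_1)\bigr)\;\geq\;\frac{3n}{16}\cdot\frac{2}{3}\;=\;\frac{n}{8}.
\]
The main obstacle is the delicate balance in the chi-squared calculation: $N$ must be large enough to drive the chi-squared divergence to zero, while $L_0$ and $L_1$ must remain tightly clustered in narrow intervals at the two ends of $[0,n]$ so as to preserve $\omega\geq 3n/4$. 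These two demands are simultaneously satisfiable precisely because $c<1$; at $c=1$ the chi-squared bound no longer vanishes, which is the source of the phase transition.
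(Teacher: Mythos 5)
Your proof is correct and arrives at the required bound, but it takes a genuinely different construction from the paper. The paper's proof fixes $\Delta = 1$ and uses very narrow, tall bumps of height $\sqrt{c\sigma^2\log n}$ (so that $\kappa^2/\sigma^2 = c\log n$ and the minimum defining $\Delta$ evaluates to $1$). The two priors are obtained by mixing over all bump positions $l \in \{1,\ldots,n/4\}$ and then comparing against the mirror image obtained by reflecting about the center of $\{1,\ldots,n\}$; the key trick there is to recognize $\widetilde P$ and $\widetilde Q$ as product measures $P\otimes P_0$ and $P_0\otimes P$ over the two halves of the sequence, which gives $d_{\mathrm{TV}}(\widetilde P,\widetilde Q)\le 2\,d_{\mathrm{TV}}(P,P_0)$ and reduces the computation to a single $\chi^2(P\|P_0)$ against pure noise. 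Your construction instead sets $\kappa=\sigma=1$ and $\Delta=\lfloor c\log n\rfloor$, using wide bumps of height $1$ and placing $N=\lfloor n/(32\Delta)\rfloor$ of them in a narrow window at each end of the sequence; you bound $d_{\mathrm{TV}}(m_0,m_1)$ directly via the triangle inequality through $P_0^*$. Both constructions exploit exactly the same mechanism (indistinguishability of a sparse-bump mixture from pure noise, driven by the $n^{c-1}$ factor that vanishes for $c<1$), and both deliver the same phase transition. Your chi-squared bound is worse by a $\log n$ factor because $N$ is smaller than $n/4$, but since the bound tends to zero anyway this costs nothing. One stylistic point worth noting: the Le Cam step going from $\frac{1}{2}(\mathbb{E}_{\pi_0}+\mathbb{E}_{\pi_1})$ to $\frac{\omega}{4}(1-d_{\mathrm{TV}}(m_0,m_1))$ is the two-prior (fuzzy hypotheses) version of Le Cam, which relies on the fact that the Hausdorff metric satisfies the triangle inequality so that any estimate is $\omega/2$-far from at least one of the two hypothesis clusters; you invoke this correctly but it would be worth spelling out the intermediate testing argument, since it is the step that actually uses $\omega$ and not just the chi-squared computation.
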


In the above result, it is possible to let $c \to 0$ as $n \to \infty$ (and in fact, the value of $n(c)$ is increasing in $c$). Thus, we conclude that, if $\kappa\sqrt{\Delta}/\sigma < \lfloor \sqrt{\log(n)} \rfloor < \lfloor n/4 \rfloor $, the localization rate is bounded away from $0$, i.e. the estimator is not consistent.  

In our next result we  complement \Cref{lemma-low-snr} by showing that if instead 
\begin{equation}\label{eq-hi-snr}
		\kappa \sqrt{\Delta}/\sigma \geq \zeta_n,
	\end{equation}
for any sequence $ \{ \zeta_n \}_{n=1,2,\ldots}$ of positive numbers diverging to infinity at an arbitrary pace  as $n \rightarrow \infty$, then the corresponding lower bound is at least of order $\frac{\sigma^2}{\kappa^2}$, for all $n$ large enough.  Of course, in light of \Cref{lemma-low-snr}, this lower bound is interesting only when $\zeta_n$ is larger than $\sqrt{\log (n)}$. In the next sections, we will further show that, provided that $\zeta_n$ is of the order $\sqrt{\log^{1 + \xi}(n)}$ or larger, for any $\xi>0$, then, up to a logarithmic factor in $n$, $\frac{\sigma^2}{\kappa^2}$ yields the asymptotic minimax lower bound on the localization rate.

\begin{lemma}\label{lemma-error-opt}
Let $\{Y_i\}_{i=1}^n$ be a time series satisfying \Cref{assume:change} with one and only one change point.	 Let $P^n_{\kappa, \Delta, \sigma}$ denote the corresponding joint distribution.  Consider the class of distributions
	\[
		\mathcal{Q}^n = \left\{P^n_{\kappa, \Delta, \sigma}: \, \Delta < n/2, \, \kappa\sqrt{\Delta}/\sigma \geq \zeta_n \right\},
	\]
	for any sequence $\{ \zeta_n \}$ such that $\lim_{n \rightarrow \infty} \zeta_n = \infty $. 
	Then, for all $n$ large enough, it holds that 
	\[
		\inf_{\hat{\eta}} \sup_{P \in \mathcal{Q}^n} \mathbb{E}_P\bigl(\bigl|\hat{\eta} - \eta(P)\bigr|\bigr) \geq \max \left\{ 1, \frac{1}{2} \Big\lceil\frac{\sigma^2}{\kappa^2} \Big\rceil e^{-2} \right\}, 
	\]
	where the infimum is over all estimators $\widehat{\eta}$ of the change point location and $\eta(P)$ denotes the change point location of $P \in \mathcal{Q}^n$.
	
\end{lemma}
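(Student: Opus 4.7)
My plan is to apply Le Cam's two-point method with two Gaussian hypotheses differing only in the location of their single change point; this is the canonical route for a parameter-estimation lower bound of this type.

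First I would reduce to the Gaussian setting. A product of independent $\mathcal{N}(\mu_i, \sigma^2)$ coordinates has Orlicz-$\psi_2$ norm bounded by a universal constant times $\sigma$, so product Gaussians with piecewise-constant means, a single change point of jump size $\kappa$, and variance $\sigma^2$ form a non-empty subset of $\mathcal{Q}^n$ whenever $\mathcal{Q}^n$ itself is non-empty. It therefore suffices to lower bound the supremum restricted to this sub-class. For fixed $\kappa, \sigma$ satisfying $\zeta_n^2 \sigma^2/\kappa^2 \leq n/2$, set $d := 2\lceil \sigma^2/\kappa^2 \rceil$, let $\eta_1 := \lfloor n/2 \rfloor$ and $\eta_2 := \eta_1 + d$, and let $P_j$ be the product Gaussian with coordinate-wise means $f^{(j)}_i = \kappa\, \mathbf{1}\{i > \eta_j\}$ and variance $\sigma^2$. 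For $n$ large enough the minimal spacing of each $P_j$ is of order $n$, so it exceeds the admissible value $\Delta$ meeting $\kappa\sqrt{\Delta}/\sigma \geq \zeta_n$ and $\Delta < n/2$, and hence $P_1, P_2 \in \mathcal{Q}^n$.

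Next I would compute the KL divergence. Since the two mean vectors agree except on the $d$ coordinates in $\{\eta_1+1, \ldots, \eta_2\}$, independence and the Gaussian KL formula give
\[
\mathrm{KL}(P_1, P_2) \;=\; d \cdot \frac{\kappa^2}{2\sigma^2} \;\leq\; 1 + \frac{\kappa^2}{\sigma^2},
\]
which is at most $2$ in the binding regime $\sigma \geq \kappa$. Invoking the standard two-point testing lower bound (e.g.\ Tsybakov's Theorem~2.2 combined with his Lemma~2.6, which rests on the Bretagnolle-Huber inequality), for any estimator $\hat\eta$,
\[
\sup_{P \in \mathcal{Q}^n} \mathbb{E}_P \bigl|\hat\eta - \eta(P)\bigr| \;\geq\; \max_{j \in \{1,2\}} \mathbb{E}_{P_j}\bigl|\hat\eta - \eta_j\bigr| \;\geq\; \frac{d}{4}\exp\bigl(-\mathrm{KL}(P_1, P_2)\bigr) \;\geq\; \frac{d}{4}\, e^{-2} \;=\; \frac{1}{2}\Bigl\lceil \frac{\sigma^2}{\kappa^2} \Bigr\rceil e^{-2}.
\]

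The $\max$ with $1$ requires a supplementary argument in the complementary low-noise regime $\kappa > \sigma$, where the quantitative bound above degenerates to $\tfrac{1}{2} e^{-2} < 1$. Here a separate two-point Le Cam argument with $|\eta_1 - \eta_2| = 2$, combined with the discreteness observation that $|\hat\eta - \eta| \geq 1$ whenever $\hat\eta \neq \eta$ for integer-valued change points, suffices to pin the uniform floor of $1$. The main obstacle is not the Le Cam step itself, which is routine, but rather (i) verifying that the constructed pair sits inside $\mathcal{Q}^n$ with matched $\Delta$ for every relevant $(\kappa, \sigma)$ and (ii) aligning constants to produce the specific prefactor $\tfrac{1}{2}$; this is precisely what forces the choice $d = 2\lceil \sigma^2/\kappa^2\rceil$ (rather than the smaller $\lceil\sigma^2/\kappa^2\rceil$) and the restriction to $\sigma \geq \kappa$ in the KL estimate.
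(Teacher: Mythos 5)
Your argument is correct and is essentially the paper's own: a two-point Le Cam reduction with Gaussian product measures that differ only in the location of the single change point, followed by the Bretagnolle--Huber/Tsybakov bound $1 - d_{\mathrm{TV}}(P_0,P_1) \geq \tfrac12 e^{-\mathrm{KL}(P_0,P_1)}$. The one substantive difference is constant bookkeeping, and yours is the more accurate side of it: you use the exact Gaussian divergence $(\mu_1-\mu_2)^2/(2\sigma^2)$ together with the standard Le Cam form $\tfrac{d}{4}e^{-\mathrm{KL}}$, which forces the separation $d = 2\lceil\sigma^2/\kappa^2\rceil$ in order to reproduce the stated prefactor $\tfrac12\lceil\sigma^2/\kappa^2\rceil e^{-2}$; the paper instead takes $\delta = \lceil\sigma^2/\kappa^2\rceil$ but omits the $\tfrac12$ in the Gaussian divergence and writes the Le Cam step as $\delta(1-d_{\mathrm{TV}})$ rather than $\tfrac{\delta}{2}(1-d_{\mathrm{TV}})$, two slips that happen to cancel and land on the same bound, so your doubling is exactly what is needed to get there with the right constants. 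The one place your sketch does not close is the $\max\{1,\cdot\}$ floor: with separation two, $\mathrm{KL} = \kappa^2/\sigma^2$ is unbounded when $\kappa > \sigma$, so the two-point bound can be made arbitrarily small there, and the discreteness remark only yields $\mathbb{E}|\hat\eta - \eta| \geq \mathbb{P}(\hat\eta \neq \eta)$, not a uniform floor of one. The paper's own proof is silent on that part of the claim as well, so this is a shared looseness in the lemma rather than a defect specific to your proposal.
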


The bounds in \Cref{lemma-low-snr} and \Cref{lemma-error-opt} are slightly sharper than the minimax  lower bounds obtained by taking $p=1$ in  Proposition 3 in the supplementary material of \cite{wang2016high}. Indeed, our analysis allows for a more refined characterization of the phase transition for the localization task by exhibiting  the threshold value of $\sqrt{\log n}$ describing the transition from the low to high signal-to-noise ratio regime.

\section{$\ell_0$ Penalization}
\label{section:potts}

In this section we describe an estimator of the change point locations based on the $\ell_0$ penalty and demonstrate  that it is minimax rate optimal.

We first formalize the $\ell_0$-penalized optimization problem, and define the change point estimators generated therefrom.  For the sake of analysis, we will provide an alternative objective function, which, we will show, generates identical change point estimators.

For fixed tuning parameter $\lambda > 0$ and data $\{Y_i\}_{i=1}^n$, define the $\ell_0$-penalized sum of squares objective function as 
	\begin{equation}\label{eq-H}
	H(u, \{Y_i\}_{i=1}^n, \lambda) = \sum_{i=1}^n \bigl(Y_i - u_i\bigr)^2 + \lambda \|Du\|_0,
	\end{equation}
	where $\|\cdot\|_0$ is the $\ell_0$-norm of a vector, $D \in \{\pm 1, 0\}^{(n-1) \times n}$ satisfies $(Du )_j = u_{j+1}-u_j$, for $j \in \{1, \ldots, n-1\}$.  Let 
	\begin{equation}\label{eq-uhat}
	\widehat{u}(\lambda) = \argmin_{u \in \mathbb{R}^n} H(u, \{Y_i\}_{i=1}^n, \lambda).
	\end{equation}
	Let $\bigl\{\hat{\eta}_k\bigr\}_{k = 1}^{\widehat{K}}$ be the collection 
	\[
		\mathcal{J}\bigl(\widehat{u}\bigr) = \bigl\{i \in \{1, \ldots, n-1\}:\, \widehat{u}_i\bigl(\lambda\bigr) \neq \widehat{u}_{i+1}\bigl(\lambda\bigr) \bigr\}.
	\]
	We thus call $\bigl\{\hat{\eta}_k\bigr\}_{k = 1}^{\widehat{K}}$ the change point estimator induced by $\widehat{u}\bigl(\lambda\bigr)$, or the change point estimator from the optimization problem \eqref{eq-uhat}.  If one replace the penalty term $\|Du\|_0$ with the $\ell_1$-norm $\|Du\|_1$, then \eqref{eq-H} is the fused Lasso objective function, see e.g. \cite{TibshiraniEtal2005} and \cite{Rinaldo2009}.

Alternatively, let $\mathcal{P}$ be any {\it interval partition} of $\{0, 1, \ldots, n\}$, i.e. a collection of $\mathcal{P}_k$ disjoint subsets of $\{1,\ldots,n\}$ of the form
	\[
	\mathcal{P} = \bigl\{\{1, \ldots, i_1\}, \{i_1 + 1, \ldots, i_2\}, \ldots, \{i_{\mathcal{P}_{k}-1} + 1, \ldots, i_{\mathcal{P}_k} \}\bigr\},
	\]
	for some integers $0 < i_1 < \cdots < i_{\mathcal{P}_k} = n$, where $\mathcal{P}_{k} \geq 1$. In particular, if $\mathcal{P}_k = 1$, then $\mathcal{P} = \bigl\{\{1, \ldots, n\}\bigr\}$.
For a fixed positive tuning parameter $\lambda > 0$ and data $\{Y_i\}_{i=1}^n$, let 
\begin{equation}\label{eq-p-hat}
	\widehat{\mathcal{P}}(\lambda) =\argmin_{\mathcal P} G\bigl(\mathcal P, \{Y_i\}_{i=1}^n, \lambda\bigr).
	\end{equation}
where the minimum ranges over all interval partitions of $\{1,\ldots,n\}$ and, for any such partition $\mathcal{P}$,
	\begin{equation}\label{eq-G}
	G\bigl(\mathcal P, \{Y_i\}_{i=1}^n, \lambda\bigr) = \sum_{I\in \mathcal P} \sum_{i\in I}\bigl(Y_i - \overline{Y}_I\bigr)^2 + \lambda \bigl(| \mathcal P| - 1\bigr), 
	\end{equation}
	with $\overline{Y}_I = |I|^{-1} \sum_{i\in I} Y_i$.
	The optimization problem \eqref{eq-p-hat} is known as the minimal partition problem and can be solved using dynamic programming in polynomial time \citep[e.g. Algorithm~1 in][]{FriedrichEtal2008}.   
The change point estimator resulting from the solution to \eqref{eq-p-hat} is simply obtained from taking  all the right endpoints of the intervals  $I \in \widehat{\mathcal{P}}$ except $n$.  In general,  without assuming any conditions on the inputs, there is no guarantee that the minimizers are unique.

We now make the simple observation that the optimization problems \eqref{eq-uhat} and \eqref{eq-p-hat} with the same inputs yield the same change point estimators. To see this equivalence we will introduce some notation that we will be using throughout.  For any vector $v \in \mathbb{R}^n$ and any $i \in \{1, \ldots, n-1\}$, if $v_i \neq v_{i+1}$, one calls $i$ an induced change point of $v$, and the collection of all the induced change points of $v$ is denoted as $J(v)$.  The set $J(v)$ yields an interval partition, i.e., if $J(v) = \{i_1, \ldots, i_N\}$, then one can define the interval partition induced by $v$ as 
	\[
	\mathcal{P} = \bigl\{\{1, \ldots, i_1\}, \{i_1 + 1, \ldots, i_2\}, \ldots, \{i_{N} + 1, \ldots, n\}\bigr\}.
	\]
	Conversely, for any interval partition $\mathcal{P}$ and a sequence $\{Y_i\}_{i=1}^n$, define their induced piecewise-constant vector $v$ as $v_i = \overline{Y}_I$, for any $i \in I$ and $I \in \mathcal{P}$.  
		Since for $I \subset \{1, \ldots, n\}$,
		\[
		\overline{Y}_I = \argmin_{x \in \mathbb{R}} \sum_{i \in I}(Y_i - x)^2,
		\]
		it follows that with the same inputs $\{Y_i\}_{i=1}^n$ and $\lambda > 0$, the solutions to \eqref{eq-uhat} and \eqref{eq-p-hat} induce each other in the sense specified above.	

\begin{remark}[Tuning parameter]
	If we view any vector $u \in \mathbb{R}^n$ as a step function with at most $n-1$ jumps, then the tuning parameter $\lambda$ penalizes the number of jumps in $u$.  For an integer interval $I \subset \{1, \ldots, n\}$, the tuning parameter $\lambda$ works in the following way.  If an integer interval $I$ is to be split into two integer sub-intervals $I_1$ and $I_2$, then it follows from \Cref{lemma-pre} that the sum of squares will decreases by 
		\begin{equation}\label{eq-remark-lambda}
			\frac{|I_1| |I_2|}{|I_1| +|I_2|}  \bigl(\overline Y_{I_1} - \overline Y_{I_2}\bigr)^2,
		\end{equation}
		but, at the same time, the penalty term will increase by $\lambda$.  Therefore the trade-off guiding the choice between refining a candidate integral partition of $\{1,\ldots,n\}$ by introducing one additional split and leaving it unchanged (so that this partition must then provide an optimal solution to \eqref{eq-p-hat}), is described by comparing \eqref{eq-remark-lambda} to $\lambda$.
		In \Cref{prop:1d localization} we will provide a theoretically optimal choice for $\lambda$. 
\end{remark}

\begin{remark}
In the rest of this paper, when there is no ambiguity, we allow the following abuse of notation.  If $s < e$, $s, e \in \mathbb{Z}$, we sometimes refer $\{s, s+1, \ldots, e\}$ and $\{s+1, \ldots, e\}$ as $[s, e]$ and $(s, e]$, respectively.  
\end{remark}

\subsection{Optimal change point localization}
\label{section:change point}

Recall in \Cref{lemma-low-snr} we have shown that if $\kappa\sqrt{\Delta}/\sigma < \sqrt{\log(n)}$, then no algorithm is guaranteed to produce consistent change point estimators.  To demonstrate the performances of \eqref{eq-uhat}, we thus require the signal-to-noise ratio  $\kappa\sqrt{\Delta}/\sigma$ to be larger than a diverging function of $n$, which we take to be of the form  $\log^{(1+\xi)/2}(n)$. As remarked in the previous section, such choice is  consistent with \Cref{lemma-error-opt}, which in principle allows for a vanishing localization rate. 

\begin{assumption}\label{assum-phase}
	There exists a sufficiently large absolute constant $C_{\mathrm{SNR}} > 0$ such that for any $\xi > 0$,
	\[
		\kappa\sqrt{\Delta}/\sigma \geq C_{\mathrm{SNR}}\sqrt{\log^{1+\xi}(n)}.
	\]
\end{assumption}
We remark that the introduction of the parameter $\xi > 0$ is to guarantee that even if $\Delta \asymp n$, the resulting estimator is still consistent.   
We do not know whether the above assumption can be relaxed by allowing for a rate of increase for $\kappa\sqrt{\Delta}/\sigma$ slower than   $\sqrt{\log^{1+\xi}(n)}$. In our proofs, this seems to be the slowest rate that we can afford.

\begin{theorem}\label{prop:1d localization}
	Let $\{Y_i\}_{i=1}^n$ satisfy \Cref{assume:change} and, for any $\lambda>0$, set
	\begin{equation}\label{eq:minimizer}
		\widehat  u(\lambda) = \argmin_{u\in \mathbb R^n} H(u, \{Y_i\}_{i=1}^n, \lambda).
		\end{equation}
	Let $\{\widehat v_{k} (\lambda) \}_{k=1}^{\widehat{K}(\lambda)}$ be the collection of change points induced by $\widehat u(\lambda)$.
	Under \Cref{assum-phase}, for any choice of $c>3$, there exists a constant  $C_{\lambda} > 0$, which depends on $c$ such that, for $\lambda = C_{\lambda}\sigma^2 \log(n)$, it holds that
	\[
		\mathbb{P} \bigl\{\widehat{K}(\lambda) = K,\, \mbox{ and } |\widehat \nu_k(\lambda) -\nu_k| = \epsilon_k \le C_{\epsilon}\sigma^2 \log(n) /\kappa^2_k, \, \forall k \in \{1,\ldots,K\} \bigr\} \geq 1 - e\cdot n^{3-c},
	\]
	where $C_{\epsilon} > 0$ is a constant depending on $C_{\lambda}$ and $C_{\mathrm{SNR}}$.  
\end{theorem}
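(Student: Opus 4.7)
My approach is to work with the equivalent partition formulation $\widehat{\mathcal{P}}(\lambda)$ from \eqref{eq-p-hat}--\eqref{eq-G} and to structure the proof in three stages: (a) a high-probability uniform concentration event $\mathcal{A}$; (b) a global structural analysis on $\mathcal{A}$ showing $\widehat K = K$ with a natural bijection between estimated and true change points; (c) a local optimality argument on $\mathcal{A}$ yielding the $\sigma^2 \log(n)/\kappa_k^2$ rate at each change point.

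\textbf{The good event.} First I would define $\mathcal{A}$ to be the event on which, for every integer interval $I \subset \{1,\ldots,n\}$,
\begin{equation*}
\bigl|I\bigr| \bigl(\overline{Y}_I - \overline{f}_I\bigr)^2 \le C_1 \sigma^2 \log(n),
\end{equation*}
where $C_1$ is chosen so that a standard sub-Gaussian tail bound combined with a union bound over the at-most $n^2$ intervals yields $\mathbb{P}(\mathcal{A}) \ge 1 - e \cdot n^{3-c}$. All subsequent arguments are deterministic on $\mathcal{A}$.

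\textbf{Structural step.} Using the splitting identity from the Tuning Parameter remark, I would establish on $\mathcal{A}$ that $\widehat{\mathcal{P}}(\lambda)$ satisfies two properties. No over-estimation: if $\widehat{\mathcal{P}}$ contained two consecutive intervals $I_1, I_2$ lying entirely within one constant segment of $f$, merging them would change the least-squares term by exactly $|I_1||I_2|/(|I_1|+|I_2|)\,(\overline Y_{I_1}-\overline Y_{I_2})^2$, which is bounded by $O(\sigma^2 \log n)$ on $\mathcal{A}$; for $C_\lambda$ sufficiently large this is smaller than $\lambda$, contradicting optimality. No under-estimation: if some interval $I \in \widehat{\mathcal{P}}$ contained two true change points $\eta_k < \eta_{k+1}$, then splitting at $\eta_k$ would decrease the least-squares term by an amount of order $\kappa_k^2 \min(|I_{\mathrm{left}}|,\Delta)$, which under \Cref{assum-phase} dominates $\lambda$ plus the noise fluctuations. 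Together these give $\widehat K = K$ and identify a unique $\widehat\nu_k$ in the same piece of $\widehat{\mathcal{P}}$ as $\eta_k$.

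\textbf{Localization with local adaptivity.} For each $k$, I would isolate a neighborhood of $\eta_k$ by looking at the two adjacent estimated change points $\widehat\nu_{k-1}, \widehat\nu_{k+1}$; thanks to the structural step, both lie in the segments flanking $\eta_k$. On the interval $J = (\widehat\nu_{k-1}, \widehat\nu_{k+1}]$ the true signal has exactly one jump at $\eta_k$, and optimality of $\widehat\nu_k$ means that swapping the split at $\widehat\nu_k$ for the split at $\eta_k$ cannot decrease the objective. Writing out this inequality and applying the splitting identity, the ``signal gain'' of splitting at $\eta_k$ versus $\widehat\nu_k$ is of order $\kappa_k^2 |\widehat\nu_k - \eta_k|$ (ignoring constants depending on the proportions of $J$ on each side of $\eta_k$), while the ``noise cost'' is bounded on $\mathcal{A}$ by $O(\sigma \kappa_k \sqrt{|\widehat\nu_k - \eta_k| \log n})$ via the concentration for the inner product of the noise with the indicator of the small sub-interval between $\widehat\nu_k$ and $\eta_k$. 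Balancing these two terms yields $|\widehat\nu_k - \eta_k| \le C_\epsilon \sigma^2 \log(n)/\kappa_k^2$.

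\textbf{Main obstacle.} The principal difficulty is obtaining the \emph{local} rate $\sigma^2\log(n)/\kappa_k^2$ rather than the global rate $\sigma^2\log(n)/\kappa^2$. This requires that the signal-vs-noise comparison be performed on an interval whose structure is effectively controlled by $\kappa_k$ alone, which in turn demands that the neighboring estimators $\widehat\nu_{k\pm1}$ be close enough to $\eta_{k\pm1}$ not to pollute the argument. I would handle this by establishing a coarse a priori bound $|\widehat\nu_j - \eta_j| < \Delta/4$ for every $j$ first (using only the global $\kappa$), and then feeding this coarse control back into the refined local argument above to obtain the sharp $\kappa_k$-dependent rate. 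A careful deterministic treatment of the boundary cases $k=1$ and $k=K$ and of the noise cross-term on the asymmetric sub-interval $(\min(\widehat\nu_k,\eta_k), \max(\widehat\nu_k, \eta_k)]$ is where the accounting is most delicate.
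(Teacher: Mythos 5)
Your overall architecture (a single high-probability concentration event, then a deterministic case analysis on $\widehat{\mathcal P}(\lambda)$, then a local swap argument) matches the paper's, and your good event $\mathcal A$ is a workable — indeed slightly more elementary — alternative to the paper's event $\mathcal B$: on $\mathcal A$ one recovers the control of $\frac{|I_1||I_2|}{|I_1|+|I_2|}(\overline Y_{I_1}-\overline Y_{I_2})^2$ up to a factor of $4$, which is what the splitting identity needs. Your coarse-then-fine strategy for local adaptivity is also a reasonable reorganization. However, there are two genuine gaps in the structural step.

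First, your ``no under-estimation'' claim is false as stated: an interval $I\in\widehat{\mathcal P}$ \emph{can} contain two true change points. What the argument can show is only that any change point inside $I$ lies within $O(\sigma^2\log n/\kappa^2)$ of one endpoint of $I$; a pair $\eta_k,\eta_{k+1}$ with $\eta_k$ near $s$ and $\eta_{k+1}$ near $e$ is perfectly consistent with optimality. The paper's proof therefore does not attempt to rule out two changes per interval; its Step~1 only shows there is no \emph{undetected} change point (one farther than $\Delta/3$ from both endpoints), and its Step~2 separately handles the exactly-two-change case. Concretely, in your sketch $\kappa_k^2\min(|I_{\mathrm{left}}|,\Delta)$ is of order $\sigma^2\log n/\kappa_k^2 \cdot \kappa_k^2$ when $|I_{\mathrm{left}}|$ is already small, which is comparable to $\lambda$, not dominating it — so no contradiction follows, and the interval with two near-endpoint changes survives. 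Consequently, $\widehat K=K$ cannot be deduced from the structural step alone; in the paper it is obtained only after the full $4$-way case analysis (no, one, or two change points per interval).

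Second, the ``splitting at $\eta_k$ decreases the SSE by $\kappa_k^2\min(|I_{\mathrm{left}}|,\Delta)$'' computation is not correct without first isolating the jump. If $I=(s,e]$ contains $\eta_k$ and possibly further change points, then $\overline f_{(\eta_k,e]}$ is an average of several levels and can be arbitrarily close to $f_{\eta_k}$ by cancellation, making the one-split signal gain arbitrarily small even when $\eta_k-s$ is large. The paper's Lemma~\ref{lemma:three changes} sidesteps this by inserting \emph{two} extra cuts at $\eta_k\pm\Delta/3$, so that the comparison is made between two abutting $\Delta/3$-length intervals $I_2=(\eta_k-\Delta/3,\eta_k]$ and $I_3=(\eta_k,\eta_k+\Delta/3]$, both of which lie entirely within a single constant segment of $f$; only then is the gain guaranteed to be $\gtrsim\Delta\kappa_k^2$. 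Your proposal needs this isolating double-split (or an equivalent device) to make the under-estimation/Step-1 argument go through; without it the argument fails whenever the flanking signal oscillates. The localization-via-swap step is broadly sound once this is fixed and the coarse $\Delta/4$ bound forces $J=(\widehat\nu_{k-1},\widehat\nu_{k+1}]$ to contain at most one undetected change point; the boundary-pollution bookkeeping you flag as ``delicate'' corresponds exactly to the cascading Lemmas~\ref{lemma:one change then no change} and \ref{lemma-eps-delta} in the paper and is nontrivial to carry out.
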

Recalling \Cref{lemma-error-opt}, we see now that the error bound we derived in \Cref{prop:1d localization} is minimax rate optimal aside from possibly  a $\log(n)$ factor.
\Cref{prop:1d localization} shows that with a proper choice of the tuning parameter, \eqref{eq-uhat} provides consistent change point estimators in the sense that with probability tending to 1 as $n \to \infty$, it holds that $\widehat{K}(\lambda) = K$ and for all $k \in \{1, \ldots, K\}$,
\[
\epsilon_k/n \leq C_{\epsilon}\frac{\sigma^2}{\kappa^2}\frac{\log(n)}{n} \leq C_{\epsilon}\frac{\Delta}{n \log^{\xi}(n)} \to 0,
\]
as $n \to \infty$.  

\begin{remark}[Uniqueness]
	We mentioned earlier that the  minimizers of the optimization problems \eqref{eq-H} and \eqref{eq-p-hat} need not be unique.  However, if the independent errors have a continuous distribution, as assumed in \Cref{assume:change}, the minimizer is unique almost surely, for each $n$ and each $\lambda$; if not, then any two solutions, say $\mathcal{P}$ and $\mathcal{P}'$, are such that 
		\[
			\sum_{I\in \mathcal P} \sum_{i\in I}\bigl(Y_i - \overline{Y}_I\bigr)^2 -  \sum_{I'\in \mathcal P'} \sum_{i\in I'}\bigl(Y_i - \overline{Y}_{I'}\bigr)^2 = \lambda \bigl( | \mathcal P'| - \mathcal P|  \bigr). 
		\] 
		This is a quadratic polynomial in the $\{Y_i\}_{i=1}^n$.  The set of its real solutions (if any exists) has $n$-dimensional Lebesgue measure $0$.  In general, if there are multiple solutions (so that \Cref{assume:change} does not hold), \Cref{assum-phase} guarantees that, with large probability (more precisely, in the event $\mathcal{B}$ defined in the proof), the minimizer is unique almost surely.
\end{remark}

\begin{remark}\label{remark:local adaptive}
	It is natural to expect that change point localization should be locally adaptive in a sense that, if the jump size $\kappa_k$ gets larger, then it is easier to estimate the location of the change point $\eta_k$.  In fact, the error rate $\epsilon_k$ derived in \Cref{prop:1d localization} matches this feature.
\end{remark}

\begin{proof}
Define the event 
	\[
	\mathcal{B} = \biggl\{\sup_{1\le a < b< c\le n} \sqrt{\frac{(b-a)(c-b)}{c-a}} \bigl|\overline Y_{(a+1,b]} - \overline f_{(a+1,b]} + \overline Y_{(b+1,c]} - \overline f_{(b+1,c]} \bigr| \leq \sigma \sqrt{ C_{ \lambda} \log(n)} \biggr\},
	\]
	where $C_{\lambda} > 0$ is a large enough constant only depending on $c$, and $a, b, c$ are integers.  In the remainder of the proof we work on the event $\mathcal{B}$. By \Cref{lem:max} in \Cref{sec:app-1}, this occurs with probability at least $1 - e\cdot n^{3-c}$.
	
For simplicity we will remove the dependence on $\lambda$ in our notation as it will implicitly understood that $\lambda = C_{\lambda}  \sigma^2 \log (n)$. Let $\widehat{\mathcal{P}}$	be the interval partition induced by $\widehat{u}$ (see  \eqref{eq:minimizer}), and let $\{s+1, \ldots, e\}$ be any member of $\widehat{\mathcal{P}}$.  The proof is completed by showing the following four steps.
	\begin{itemize}
	\item [Step 1] The interval $(s, e]$ contains no more than two true change points.  This is shown in \Cref{lemma:three changes}.

	\item [Step 2] If $(s, e] $ contains exactly two true change points, say $\eta_k,\eta_{k+1}$, then
		\[
		\eta_k - s \leq C_{\epsilon}\sigma^2 \log(n)/\kappa^2_k, \mbox{ and } e -\eta_{k+1} \leq C_{\epsilon}\sigma^2 \log(n)/\kappa^2_{k+1}.
		\]
		This is shown in \Cref{lemma:two change localization}.
		
	\item [Step 3] If $(s, e]$ contains only one true change point, say $\eta_k$, without loss of generality, let $\eta_k - s \leq e - \eta_k$, then it must hold that
		\[
		s \leq \eta_k \leq e \leq \eta_{k+1} 
		\]
		and
		\[
		\eta_k - s \leq C_{\epsilon}\sigma^2 \log(n)/\kappa^2_k, \mbox{ and } \eta_{k+1} - e \leq C_{\epsilon}\sigma^2 \log(n)/\kappa^2_{k+1}.
		\]
		This is shown in \Cref{section:one change}.
		
	\item [Step 4] If $(s, e]$ contains no true change point, then there exist two true change points $\eta_k$ and $\eta_{k+1}$ satisfying
		\[
		\eta_k \leq s < e \le \eta_{k+1}
		\]
		and
		\[
		s - \eta_k \leq C_{\epsilon}\sigma^2 \log(n)/\kappa^2_k, \mbox{ and } \eta_{k+1} - e \leq C_{\epsilon}\sigma^2 \log(n)/\kappa^2_{k+1}.
		\]
		This is shown in \Cref{section:no change}.
	\end{itemize}
\end{proof}

\section{CUSUM}
\label{section:bs}

As for the univariate mean change point detection problem, the $\ell_0$-penalization estimator is not the only one which achieves the minimax optimality.  Binary segmentation (BS) \citep[e.g.][]{ScottKnott1974} based on CUSUM statistics is arguably the most popular change point detection method.  It has been shown that BS is consistent yet not optimal \citep[e.g.][]{venkatraman1992consistency}.  \cite{fryzlewicz2014wild} proposed a variant of BS, namely wild binary segmentation (WBS), which is shown to lead to a better localization rate than the BS algorithm. In this section, we will recall the WBS algorithm, and give refined results on its performance, with a proof which has more careful tracking of all parameters and all the constants involved.  As a result, we prove that WBS, just like the method studied in the previous section, also guarantees a localization error rate that is rate minimax optimal.    However, compared to the $\ell_0$-penalization methods, WBS is computationally more expensive and involves more tuning parameters.

\begin{definition}[CUSUM statistics]\label{def-cusum}
	For a sequence $\{Y_i\}_{i=1}^n$, any pair of time points $(s, e) \subset \{0, \ldots, n\}$ with $s < e -1$, and any time point $t = s+1, \ldots, e-1$, let the CUSUM statistics be
	\[
	\widetilde Y^{s,e}_t = \sqrt {\frac{e-t}{(e-s)(t-s)} } \sum_{i=s+1}^t Y_i  -\sqrt {\frac{t-s}{(e-s)(e-t)} } \sum_{i=t+1}^e Y_i.
	\]
\end{definition}

For a collection of independent Gaussian random variables $\{Y_i\}_{i=1}^n$ with $\mathbb{E}(Y_i) = f_i$ and same variance, one can easily derive that 
	\[
		\max_{t = 1, \ldots, n-1}\bigl|\widetilde{Y}^{0, n}_t\bigr|
	\] 
	is the generalized likelihood ratio statistic to test the hypothesis:
	\begin{align}\label{eq-test}
	H_0: f_1 = \cdots = f_n \mbox{ v.s. }  H_1: \mbox{there exists } t_* \mbox{ such that } f_1 = \cdots = f_{t_*} \neq f_{t_* + 1} = \cdots f_n.
	\end{align}
In particular, the BS algorithm searches for the time point which has the largest absolute CUSUM statistics value, i.e.
	\[
		\widehat{t} = \argmax_{t = 1, \ldots, n-1}\bigl|\widetilde{Y}^{0, n}_t\bigr|.
	\] 
	
However, as noted in \cite{fryzlewicz2014wild}, when there are potentially multiple change points, their combined effect might cancel out and the BS is guaranteed to be effective only when applied to intervals containing at most one change point. WBS improves on BS by performing multiple CUSUM tests over randomly chosen sub-intervals in such a manner that each change point will, with high probability, be the only change point deep inside some selected interval and can be identified using the BS algorithm within that interval. See \Cref{algorithm:WBS} for a formal description of WBS.

\begin{algorithm}[htbp]
\begin{algorithmic}
	\INPUT Independent samples $\{X_i\}_{i=1}^{n}$, collection of intervals $\{ (\alpha_m,\beta_m)\}_{m=1}^M$, tuning parameters $\tau > 0$.
	\For{$m = 1, \ldots, M$}  
		\State $(s_m, e_m) \leftarrow [s,e]\cap [\alpha_m,\beta_m]$
		\If{$e_m - s_m > 1$}
			\State $b_{m} \leftarrow \argmax_{s_m + 1 \leq t \leq e_m - 1}  | \widetilde Y^{s_m,e_m}_{t}|$
			\State $a_m \leftarrow \bigl| \widetilde Y^{s_m,e_m}_{b_{m}}\bigr|$
		\Else 
			\State $a_m \leftarrow -1$	
		\EndIf
	\EndFor
	\State $m^* \leftarrow \argmax_{m = 1, \ldots, M} a_{m}$
	\If{$a_{m^*} > \tau$}
		\State add $b_{m^*}$ to the set of estimated change points
		\State WBS$((s, b_{m*}),\{ (\alpha_m,\beta_m)\}_{m=1}^M, \tau)$
		\State WBS$((b_{m*}+1,e),\{ (\alpha_m,\beta_m)\}_{m=1}^M,\tau ) $

	\EndIf  
	\OUTPUT The set of estimated change points.
\caption{Wild Binary Segmentation. WBS$((s, e),$ $\{ (\alpha_m,\beta_m)\}_{m=1}^M, \tau $)}
\label{algorithm:WBS}
\end{algorithmic}
\end{algorithm} 

It has been shown under a set of slightly stronger conditions, 
\cite{fryzlewicz2014wild} originally put forward the WBS algorithm and provided an analysis of its performance. Below we refine such analysis and formally prove that that WBS is  minimax rate-optimal in terms of the required signal-to-noise ratio and the localization rate. 

\begin{theorem}\label{thm-wbs}
For WBS algorithm detailed in \Cref{algorithm:WBS}, assume the inputs are as follows:
	\begin{itemize}
		\item the sequence $\{Y_i\}_{i=1}^n$ satisfies Assumptions~\ref{assume:change} and \ref{assum-phase};
		\item the collection of intervals $\{(\alpha_m, \beta_m)\}_{m=1}^M \subset \{1, \ldots, n\}$, whose endpoints are drawn independently and uniformly from $\{1, \ldots, n\}$, satisfy $\max_{m = 1, \ldots, M}(\beta_m - \alpha_m) \leq C_R \Delta$, almost surely, for an absolute constant $C_{R} > 1$;
		\item the tuning parameters  $\tau$ satisfies 
			\begin{equation}\label{eq-tau}
				c_{\tau, 1}\sigma\sqrt{\log(n)} < \tau < c_{\tau, 2}\kappa\sqrt{\Delta},
			\end{equation}
			where $c_{\tau, 1}, c_{\tau, 2} > 0$ are sufficiently large and   small  absolute constants.
	\end{itemize}	
Let $\bigl\{\hat{\eta}_k\bigr\}_{k=1}^{\widehat{K}}$ be the corresponding output of the WBS algorithm. Then, 
	\begin{align}
	& \mathbb{P}\left\{\widehat{K} = K \quad \text{and} \quad   \epsilon_k \leq C_{\epsilon}\sigma^2\log(n)\kappa^{-2}_k, \forall k \in \{1,\ldots,K\} \right\} \nonumber \\
	& \hspace{2cm} \geq 1 - e\cdot n^{3 -c} - e\cdot n^{2 -c} - \exp\left\{\log\left(\frac{n}{\Delta}\right) - \frac{M\Delta^2}{16n^2}\right\}, \label{eq-wbs-thm-prob}
	\end{align}
	where $c > 3$ is an absolute constant and $C_{\epsilon} > 0$ is a sufficiently large constant.
\end{theorem}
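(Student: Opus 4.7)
The plan is to mirror the structure of the proof of \Cref{prop:1d localization}: identify a high-probability ``good'' event on which the algorithm can be analyzed deterministically, then argue by induction on the recursion tree of WBS. The three probability terms in \eqref{eq-wbs-thm-prob} correspond to three sub-events whose intersection forms the good event. The $e\cdot n^{3-c}$ term comes from a uniform CUSUM deviation bound analogous to the event $\mathcal{B}$ in the earlier proof, namely that $|\widetilde Y^{s,e}_t - \widetilde f^{s,e}_t|$ is at most $C\sigma\sqrt{\log(n)}$ simultaneously for all integer triples $s<t<e$; this follows from the sub-Gaussian tail bound and a union bound over $O(n^3)$ triples. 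The $e\cdot n^{2-c}$ term comes from an analogous maximal inequality applied at the coarser scale of the $M$ random intervals (effectively over $O(n^2)$ endpoint pairs). Finally, $\exp\{\log(n/\Delta)-M\Delta^2/(16n^2)\}$ is the probability of the complement of the \emph{random-interval coverage event} $\mathcal{M}$: for every true change point $\eta_k$ there exists some index $m$ with $\alpha_m\in(\eta_k-\Delta/2,\eta_k-\Delta/4)$ and $\beta_m\in(\eta_k+\Delta/4,\eta_k+\Delta/2)$. Since each random interval is good for a fixed $\eta_k$ with probability at least $\Delta^2/(16n^2)$, a union bound over the $K\leq n/\Delta$ change points yields the stated term.

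On the good event I would proceed by induction on the recursion tree of WBS. The induction hypothesis at each node \textsc{WBS}$((s,e),\cdot,\cdot)$ is a ``buffer condition'': every true change point contained in $(s,e)$ lies at distance at least a small fixed fraction of $\Delta$ from both $s$ and $e$. If $(s,e)$ contains no true change point, then the deviation event implies $a_m \leq C\sigma\sqrt{\log(n)} < \tau$ for every $m$, and the branch terminates correctly. If $(s,e)$ contains at least one true change point, then $\mathcal{M}$ produces a random sub-interval $(\alpha_m,\beta_m)\subset(s,e)$ isolating some $\eta_k$ with buffers of size $\Delta/4$ on both sides, and the standard CUSUM signal-to-noise calculation together with the deviation bound shows that $a_m$ on this interval attains a value of order $\kappa_k\sqrt{\Delta}$. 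By \Cref{assum-phase} and the upper bound in \eqref{eq-tau}, this strictly exceeds $\tau$, so WBS does not terminate and declares some point $b_{m^*}$.

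The main obstacle is to show that the declared $b_{m^*}$ lies within $C_\epsilon\sigma^2\log(n)/\kappa_{k^*}^2$ of some true change point $\eta_{k^*}$. The difficulty is that $m^*$ is the \emph{global} argmax over all random intervals, not necessarily the good one produced by $\mathcal{M}$, so the selected interval $(s_{m^*},e_{m^*})$ might contain multiple change points or be poorly centered. To handle this I would classify the selected interval into sub-cases paralleling Steps 2--4 in the proof of \Cref{prop:1d localization}, according to whether $(s_{m^*},e_{m^*})$ contains exactly two, exactly one, or zero true change points, and in each sub-case leverage the uniform deviation bound together with the inequality $a_{m^*}\geq a_{m}$ for the good index $m$ supplied by $\mathcal{M}$. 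The quantitative heart of the argument is the single-change-point CUSUM localization lemma, adapted from \cite{fryzlewicz2014wild}: if $(s_{m^*},e_{m^*})$ isolates a single change point $\eta_{k^*}$ with buffer comparable to $\Delta$, then on the deviation event $|b_{m^*}-\eta_{k^*}|\leq C_\epsilon\sigma^2\log(n)/\kappa_{k^*}^2$. Tracking the dependence on $\sigma$, $\Delta$, $\kappa_{k^*}$ and $\log(n)$ explicitly in this lemma, where \cite{fryzlewicz2014wild} were less careful, is what allows us to conclude the minimax rate.

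Finally, I would verify that the two children $(s,b_{m^*})$ and $(b_{m^*}+1,e)$ inherit the buffer condition. Under \Cref{assum-phase} we have $C_\epsilon\sigma^2\log(n)/\kappa^2 \leq C_\epsilon\Delta/\log^{\xi}(n) \ll \Delta$, so the localization error at each recursion step is negligible compared to $\Delta$ and the buffer condition propagates to both children. Iterating then yields $\widehat{K}=K$ together with the claimed bound $\epsilon_k \leq C_\epsilon\sigma^2\log(n)/\kappa_k^2$ for every $k$, and the entire argument is deterministic on the good event whose complement is controlled by \eqref{eq-wbs-thm-prob}.
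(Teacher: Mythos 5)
Your high-level plan matches the paper's: define a good event with three components (a uniform CUSUM deviation bound giving the $n^{3-c}$ term, a second uniform deviation bound giving $n^{2-c}$, and a random-interval coverage event $\mathcal{M}$), then argue by induction on the WBS recursion tree. The coverage computation is correct, and (minor point) the paper's second event is a uniform partial-sum deviation $\sup_{s<e}(e-s)^{-1/2}|\sum_{i=s+1}^e(Y_i-f_i)|\leq\gamma$, used inside the localization lemma, rather than a ``coarser-scale CUSUM'' bound; the combinatorics is still $O(n^2)$.

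The substantive gap is the induction invariant. You state the invariant as ``every true change point contained in $(s,e)$ lies at distance at least a small fixed fraction of $\Delta$ from both $s$ and $e$,'' and claim it propagates to the children because the localization error is $\ll\Delta$. This is exactly backwards: because WBS locates $b_{m^*}$ to within $\epsilon$ of $\eta_{k^*}$ but not exactly, after splitting into $(s,b_{m^*})$ and $(b_{m^*}+1,e)$ the change point $\eta_{k^*}$ sits \emph{inside} one of the children at distance $\leq\epsilon\ll\Delta$ from its new boundary, so your buffer condition immediately fails at the first recursion level. Consequently, your termination criterion ``$(s,e)$ contains no true change point'' is also wrong: an interval can legitimately contain change points (the ones already detected, lying within $\epsilon$ of an endpoint) and still be one on which WBS must correctly terminate. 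The paper's invariant is the dual one---that each endpoint of $(s,e)$ is within $\epsilon$ of a true change point---from which it follows (using $\epsilon\leq\Delta/4$ under \Cref{assum-phase}) that every change point in $(s,e)$ is either within $\epsilon$ of an endpoint (``detected'') or at distance $\geq 3\Delta/4$ from both endpoints (``undetected''), and the termination criterion is ``no undetected change points.'' The cases (a)--(c) in the paper's Step 1 then verify that the presence of only detected change points keeps all CUSUM statistics below $\tau$; your proposal omits cases (b) and (c) entirely.

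A second, independent issue is your localization step. You propose to classify $(s_{m^*},e_{m^*})$ by whether it contains zero, one, or two change points and then invoke a single-change-point localization lemma. But $(s_{m^*},e_{m^*})$ can be up to $C_R\Delta$ long with $C_R>1$, hence can contain up to $\lfloor C_R\rfloor+1\geq 3$ change points, so the 0/1/2 trichotomy is incomplete; this trichotomy is valid in the $\ell_0$ proof only because Step 1 there establishes an a priori bound of two change points per cell, a fact that has no counterpart here. The paper avoids the case split altogether: its Lemma~\ref{lem-cov-12} is a multi-change-point localization result, requiring only $e_0-s_0\leq C_R\Delta$, a lower bound on $\sup_t|\widetilde Y^{s,e}_t|$ (supplied by the maximizing property $a_{m^*}\geq a_m$), and the two deviation events, and it concludes directly that there is some $\eta_k$ with $\min\{\eta_k-s_{m^*},e_{m^*}-\eta_k\}>\Delta/4$ and $|b_{m^*}-\eta_k|\lesssim\gamma^2\kappa_k^{-2}$. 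The internal machinery of that lemma (the Venkatraman-style decay estimates and the projection identity) is the genuine technical content that your sketch leaves unaddressed, and the route through a 0/1/2 case split does not substitute for it.
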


\begin{remark}
For simplicity, we require \Cref{assume:change} in \Cref{thm-wbs}, but in fact we do not need continuous density functions condition.  In addition, we can set $\xi = 0$ in \Cref{assum-phase} if $\Delta = o(n)$.  	
\end{remark}

\Cref{thm-wbs} shows that with suitable choice for the tuning parameters, WBS is optimal in the sense that:
	\begin{itemize}
		\item under the  signal-to-noise ratio regime detailed in \Cref{assum-phase}, it yields consistent estimators of the change point locations that with probability tending to 1:  $\widehat{K} = K$, and for all $k = 1, \ldots, K$,
			\[
				\epsilon_k/n \leq C_{\epsilon}\sigma^2\log(n)\kappa^{-2}_k /n \leq \frac{C_{\epsilon}}{C^2_{\mathrm{SNR}}}\frac{\Delta}{n\log^{\xi}(n)} \to 0,
			\]
			as $n \to \infty$; and
		
		\item it possesses a localization rate 
			\[
				\epsilon/n = n^{-1}\max_{k = 1, \ldots, K} C_{\epsilon}\sigma^2\log(n)\kappa^{-2}_k \leq n^{-1}C_{\epsilon}\sigma^2\log(n)\kappa^{-2},
			\]
			which is minimax rate optimal, save for a $\log(n)$ factor, according to \Cref{lemma-error-opt}.
	\end{itemize}

\begin{remark}
To guarantee that \eqref{eq-wbs-thm-prob} tends to 1 as $n \to \infty$, the number of random intervals $M$ needs to satisfy 
	\[
		M \gtrsim \frac{n^2}{\Delta^2}\log\left(\frac{n}{\Delta}\right).
	\]	
\end{remark}

\begin{remark}[Tracking constants]
For readability, we refrain the pursuit of explicitly expressing all constants, and only show the hierarchy of the constants.  One would first choose $c > 3$ in \eqref{eq-wbs-thm-prob} to make sure that the consistency result holds.  This will determine $c_{\tau, 1}$, which is the same as $C_{\gamma}$ used in the proof, and consequently $c_{\tau, 2}$, which also depends on $C_{\mathrm{SNR}}$ and $C_R$.  All these constants finally determine $C_{\epsilon}$.  
\end{remark}

\begin{remark}[Tuning parameters]
The tuning parameter $\delta$ is introduced to avoid false positives.  Specifically, the range displayed in \Cref{eq-tau} is used in {\bf Step 1} in the proof.  Notice that, by \Cref{assum-phase} and with properly chosen constants, such range is not an empty set for $\tau$.  As shown in the proof,  over an event of probability tending to $1$, the lower bound of \eqref{eq-tau} serves as an upper bound of the maximum CUSUM statistics when there are no change points, and the upper bound serves as a lower bound of the maximum CUSUM statistics when there exists a change point. 
\end{remark}

\begin{remark}[Comparisons with \Cref{prop:1d localization}]  In \Cref{prop:1d localization}, the only tuning parameter is the penalization level $\lambda$, while in \Cref{thm-wbs}, we require knowledge on $\tau$, $\delta$, and the number of random intervals $M$.  In practice, \cite{fryzlewicz2014wild} suggest and AIC-based method for picking these parameters.  
	
\end{remark}

\begin{proof}
Since $\epsilon$ is the desired order of localization error rate, by induction, it suffices to consider any generic interval $(s, e) \subset (0, T)$ that satisfies 
	\[
		\eta_{k-1} \leq s \leq \eta_k \leq \ldots \leq \eta_{k+q} \leq e \leq \eta_{k+q+1}, \quad q\ge -1, 
	\]
	and
	\[
		\max \bigl\{ \min \bigl\{\eta_k - s,\, s - \eta_{k-1}\bigr\},\,  \min \bigl\{\eta_{k+q+1} - e,\, e - \eta_{k+q}\bigr\}\bigr\} \leq \epsilon,
	\]
	where $q = -1$ indicates that there is no change point contained in $(s, e)$.

Under \Cref{assum-phase}, it holds that
	\[
		\epsilon = C_{\epsilon}\sigma^2\log(n)\kappa^{-2} \leq \frac{C_{\epsilon}}{C^2_{\mathrm{SNR}}}\frac{\Delta}{\log^{\xi}(n)} \leq \Delta/4,
	\]	
	with sufficiently large $C_{\mathrm{SNR}}$.  It, therefore, has to be the case that for any change point $\eta_k \in (0, T)$, either $|\eta_k - s|\leq \epsilon$ or $|\eta_k - s| \geq \Delta - \epsilon \geq 3\Delta /4$.  This means that $ \min\{|\eta_k-e|,\, |\eta_k-s|\} \leq \epsilon$ indicates that $\eta_k$ is a detected change point in the previous induction step, even if $\eta_k \in (s, e)$.  We refer to $\eta_k \in (s, e)$ an undetected change point if $\min\{\eta_k - s, \, \eta_k - e\} \geq 3\Delta/4$.
	
In order to complete the induction step, it suffices to show that WBS (i) will not detect any new change point in $(s, e)$ if all the change points in that interval have been previous detected, and (ii) will find a point $b \in (s, e)$ -- in fact in $(s + \delta(e - s), e - \delta(e-s))$ -- such that $|\eta_k - b| \leq \epsilon$ if there exists at least one undetected change point in $(s, e)$.

We will consider the events $\mathcal{A}_1(\gamma)$, $\mathcal{A}_2(\gamma)$ and $\mathcal{M}$ defined in \eqref{eq-event-A1}, \eqref{eq-event-A2} and \eqref{eq-event-M}, respectively.  Set $\gamma$ to be $C_{\gamma}\sigma \sqrt{\log(n)}$, with a sufficiently large $C_{\gamma}$.  The rest of the proof assumes the the event 	
	\[
		\mathcal{A}_1(C_{\gamma}\sigma \sqrt{\log(n)}) \cap \mathcal{A}_2(C_{\gamma}\sigma \sqrt{\log(n)}) \cap \mathcal{M}.
	\]
	which, in light of \Cref{lem:A1A2M} from \Cref{sec:app-3}, has probability tending to $1$.
\vskip 3mm
\noindent {\bf Step 1.}  In this step, we will show that WBS will consistently detect or reject the existence of undetected change points within $(s, e)$.

Let $a_m$, $b_m$ and $m^*$ be defined as in \Cref{algorithm:WBS}.  Suppose there exists a change point $\eta_k \in (s, e)$ such that $ \min\{\eta_k - s, e - \eta_k\} \geq 3\Delta/4$.  In the event $\mathcal M$, there exists an interval $(\alpha_m, \beta_m)$ selected by WBS such that $\alpha_m \in [\eta_k - 3\Delta/4, \eta_k - \Delta/2]$ and $\beta_m \in [\eta_p+\Delta/2, \eta_p +3\Delta/4]$.

Following \Cref{algorithm:WBS}, $[s_m, e_m] = [\alpha_m,\beta_m]\cap [s, e]$.  We have that $\min \{\eta_k - s_m, e_m - \eta_k\} \ge (1/4)\Delta$ and $[s_m, e_m] $ contains at most one true change point.

By choosing $c_1 = 1/2$ in \Cref{lemma-step1-1}, it holds that
	\[
		\max_{s_m < t < e_m} \bigl|\widetilde{f}_{t}^{s_m, e_m}\bigr|  \geq  \kappa_k \sqrt{\Delta} /4 ,
	\]
	where $e_m-s_m \le 2\Delta$ is used in the last inequality.  Therefore
		\begin{align*}
			 a_m &  = \max_{s_m < t < e_m} \bigl|\widetilde{Y}_{t}^{s_m, e_m}\bigr| \geq \max_{s_m < t < e_m} \bigl|\widetilde{f}_{t}^{s_m, e_m}\bigr|  - \gamma  \geq  \kappa_k \sqrt{\Delta}/4 - \gamma.
		\end{align*}
	Thus for any undetected change point $\eta_k \in (s, e)$, it holds that
		\begin{equation}\label{eq:wbsrp size of population}
			a_{m^*} = \sup_{1\le m\le  M} a_m \geq  \kappa_k \sqrt{\Delta}/4  - \gamma \geq c_{\tau, 2} \kappa_k \sqrt \Delta,   
		\end{equation}
		where the last inequality is from the choice of $\gamma$ and $c_{\tau, 2} > 0$ is achievable with a sufficiently large $C_{\mathrm{SNR}}$ in \Cref{assume:change}.  Then, WBS correctly accepts the existence of undetected change points.

Suppose there does not exist any undetected change point within $(s, e)$, then for any $(s_m, e_m) = (\alpha_m, \beta_m) \cap (s, e)$, one of the following situations must hold.
	\begin{itemize}
		\item [(a)]	There is no change point within $ (s_m, e_m)$;
		\item [(b)] there exists only one change point $\eta_k \in (s_m, e_m)$ and either  $\min\{\eta_k - s_m, e_m - \eta_k\} \le \epsilon_k$; or
		\item [(c)] there exist two change points $\eta_k, \eta_{k+1} \in (s_m, e_m)$ and $  \eta_k - s_m \le \epsilon_k$, $e_m - \eta_{k+1}  \leq \epsilon_{k+1}$.
	\end{itemize}
Since cases (a) and (b) are similar and in fact simpler to the case (c), we will only provide the analysis for (c) in the proof.  
Observe that if (c) holds,  by
		\Cref{lemma:cusum two detected change point bound} 
		$$ \sup_{s_m\le t\le e_m}  | \widetilde f^{s_m,e_m}_t|   \le  \sqrt {  e_m-\eta_{k+1}}   \kappa_{ k+1}   +   \sqrt {\eta_k-s_m } \kappa_k   \le 2 C_\epsilon \sigma \sqrt {\log (n) }  .  $$
	As a result,
		$$  \sup_{s_m\le t\le e_m} | \widetilde Y^{s_m,e_m}_t |   \le 
		 \sup_{s_m\le t\le e_m}   |  \widetilde f^{s_m,e_m}_t  -\widetilde Y^{s_m,e_m}_t  |   +  \sup_{s_m\le t\le e_m} |  \widetilde f^{s_m,e_m}_t |   \le
		 2 C_\epsilon \sigma \sqrt {\log (n) }  + C_\gamma \sigma \sqrt {\log(n) } <\tau
		 $$
		 where  event $\mathcal A_1 ( C_\gamma \sigma \sqrt {\log(n) } )$  is used in the first inequality and  \eqref{eq-tau} is used in the last inequality. Therefore WBS will always correctly reject the existence of undetected change points.

\vskip 3mm
\noindent {\bf Step 2.}  Assume that there exists a change point $\eta_k \in (s, e)$ such that $\min\{\eta_k - s, \eta_k - e\} \ge 3\Delta/4$.  Let $s_m$, $e_m$ and $m^*$ be defined as in \Cref{algorithm:WBS}.  To complete the proof it suffices to show that, there exists a change point $\eta_k \in (s_{m*}, e_{m*})$ such that $\min\{\eta_k - s_{m*}, \eta_k - e_{m*}\} \geq \Delta/4$ and $|b_{m*} - \eta_k| \leq \epsilon$.

To that end, we are to ensure that the assumptions of \Cref{lem-cov-12} are verified.  The proof of \Cref{lem-cov-12} relies on a number of results, the relationship of which is shown in \Cref{fig-rm}.  Observe that \eqref{eq:wbs noise} is straightforward from \Cref{assum-phase}, \eqref{eq-wbs-lambda} and \eqref{eq:wbs noise 2} follow from the definition of $\mathcal A_1$ and $\mathcal A_2$, and that \eqref{eq:wbs size of sample} follows from \eqref{eq:wbsrp size of population}.

Thus, all the conditions in \Cref{lem-cov-12} are met, and we therefore conclude that there exists a change point $\eta_{k}$, satisfying
	\begin{equation}
		\min \{e_{m^*}-\eta_k,\eta_k-s_{m^*}\}    >  \Delta /4 \label{eq:coro wbsrp 1d re1}
	\end{equation}
	and
	\[
		| b_{m*}-\eta_{k}|\le  C_3\gamma^2 \kappa^{-2} \le \epsilon,
	\]
	where the last inequality holds from the choice of $\gamma$ and \Cref{assum-phase}.

The proof is complete by noticing the fact that \Cref{eq:coro wbsrp 1d re1}  and  $(s_{m^*}, e_{m^*}) \subset (s, e)$ imply that
	\[
	\min \{e-\eta_k,\eta_k-s\}  >  \Delta /4 >\epsilon.
	\]
	As discussed in the argument before {\bf Step 1}, this implies that $\eta_k $ must be an undetected change point.

\end{proof}


\section{Conclusions}
In this paper we have provided a complete characterization of the classical problem of univariate mean change point localization for a sequence of independent sub-Gaussian random variables with piecewise-constant means. We have considered the most general setting in which all the parameters of the problems are allowed to change with the length $n$ of the sequence. We have identified a critical function of the model parameters that is able to discriminate the portion of the parameter space in which consistent localization is impossible from the part in which it is feasible. We have further derived the minimax optimal localization rate for this problem and showed that two computationally efficient methods achieve such a rate.

We would like to point out that the $\ell_0$-penalization methods can also be used in handling change point detection for more complex data types, such as high-dimensional mean, covariance and networks.  The developments rely on feasible algorithms for their corresponding problems, but we conjecture that $\ell_0$-penalization methods on complex data types would also enjoy the same optimality with fewer tuning parameters than those in CUSUM-based methods.

Finally, we conjecture that the upper bounds on the localization rate exhibited in both Sections~\ref{section:potts} and \ref{section:bs} can be sharpened by replacing the $\log(n)$ term with a smaller quantity of order $\log \log(n)$, thus further reducing the gap with the lower bound in \Cref{lemma-error-opt}.

\section{Acknowledgments}
We thank Zhou Fan, Paul Fearnhead, Rebecca Killick, Housen Lin, Axel Munk, Richard J. Samworth, Ryan Tibshirani and Tengyao Wang for constructive conversations.

\appendix
\section{Proofs of the Results in \Cref{section:lb}}
\label{sec:app-2}

\begin{proof}[Proof of \Cref{lemma-low-snr}]
Without loss of generality, suppose that $n/4$ is an integer.  For $l \in \{1, \ldots, n/4\}$, let $\widetilde u_l \in \mathbb R^{n}$ be such that the $i$th coordinate of $\widetilde u_l (i)$, $i = 1, \ldots, n$, satisfies
	\[
		\widetilde u_l (i) = \begin{cases}
			\sqrt{c \sigma^2 \log(n)}, &  i = l; \\
			0, & \mbox{otherwise},
		\end{cases}
	\]
	where $0 < c < 1$.  Let $\widetilde v_l \in \mathbb R^{n}$ be such that $\widetilde v_l(i) = \widetilde u_l(n-i+1)$, $i = 1, \ldots, n$.  Let $\widetilde P_l$ and $\widetilde Q_l$ be the multivariate Gaussian distributions $\mathcal{N}(\widetilde u_l, \sigma^2 I_n)$ and $\mathcal{N}(\widetilde v_l, \sigma^2 I_n)$, respectively and set
 	\[
		\widetilde P = \frac{1}{n/4} \sum_{l = 1}^{n/4} \widetilde P_l \quad \mbox{and} \quad \widetilde Q = \frac{1}{n/4} \sum_{l = 1}^{n/4} \widetilde Q_l.
	\]

Note that for each $l \in \{1, \ldots, n/4\}$, $\widetilde{P}_l$ has two change points,  at locations $l - 1$ and $l$, and therefore, $\Delta = 1$.  Furthermore, the jump size is $\kappa = \sqrt{c \sigma^2 \log(n)}$ and the fluctuation is $\sigma^2$.  As a result,  
	\[
		\kappa \sqrt{\Delta}/\sigma = \sqrt{c \log(n)},
	\]
	which implies that all $\widetilde{P}_l \in \mathcal{P}^n_c$.  The same arguments show that  $\widetilde{Q}_l \in \mathcal{P}^n_c$, for all $l$.  For each $l$ and $l'$ in $\{1,\ldots,n/4  \}$, we have that, by constructions, $H( \eta(\widetilde P_l), \eta(\widetilde Q_{l'}) \geq \frac{n}{2} $, where $\eta(\widetilde P_l)$ and $\eta(\widetilde Q_{l'})$ denote the sets of change point locations of $\widetilde P_l$ and $\widetilde Q_{l'}$, respectively. Then it follows from 
 Le Cam's lemma \citep[e.g.][]{yu1997assouad} that
	\begin{equation} \label{eq:lecam}
		\inf_{\hat{\eta}} \sup_{P \in \mathcal{P}^n_c} \mathbb{E}_P\bigl( H(\hat{\eta}, \eta(P) ) \bigr) \geq \frac{n}{4}\bigl\{1 - d_{\mathrm{TV}}(\widetilde P, \widetilde Q)\bigr\},
	\end{equation}	
	where $d_{\mathrm{TV}}(\cdot, \cdot)$ is the total variation distance between two probability measures and the infimum is over all estimators $\widehat{\eta} = \{ \widehat{\eta}_k\}_{k=1}^{\widehat{K}}$ of the change point locations. Above, $\eta(P)$ is the set of locations of all the change points of $P \in  \mathcal{P}^n_c$.

Let $u_l \in \mathbb R^{n/2}$ be a sub-vector of $\widetilde{u}_l$ consisting of the first $n/2$ entries of $\widetilde u_l$.  Let 
	$P_l$ and $P_0$ be the multivariate Gaussian distributions $\mathcal{N}(u_l, \sigma^2 I_{n/2})$ and $\mathcal{N}(0, \sigma^2 I_{n/2})$, receptively.  Due to the symmetry between $\widetilde{u}_l$ and $\widetilde{v}_l$, it holds that
	\begin{equation}\label{eq:deduction}
		d_{\mathrm{TV}}(\widetilde P, \widetilde Q) \leq 2 d_{\mathrm{TV}}(P, P_0),
	\end{equation}
	where $P = \frac{1}{n/4} \sum_{l=1}^ {n/4} P_l$.  Since $d_{\mathrm{TV}}(P, P_0) \leq \sqrt {\chi^2 (P, P_0)}$, where $\chi^2(\cdot, \cdot)$ is the $\chi^2$-divergence between two probability measures \citep[see, e.g., Equation 2.27 in][]{Tsybakov2009}, it suffices to provide an upper bound for $\chi^2(P, P_0)$.
We have
	\begin{align*}
		\chi^2 (P, P_0) & = \left(\frac{1}{n/4}\right)^2 \sum_{l, m = 1}^{n/4}  \mathbb{E}_{P_0} \left(\frac{dP_l dP_m}{dP_0dP_0} \right) - 1\\
		&  = \left(\frac{1}{n/4}\right)^2 \sum_{l, m = 1}^{n/4} \exp\left(\frac{u_l^{\top}u_m}{\sigma^2}\right) - 1 \\
		& = \left(\frac{4}{n}\right)^2 \left[\sum_{l = 1}^{n/4}\bigl\{\exp\bigl(c \log(n)\bigr)\bigr\} + (n/4)(n/4 - 1)\right] - 1\\&  = 4n^{-1}(n^c - 1),
	\end{align*}
	where the third identity follows from the observation that for $l, m  =1, \ldots, n/4$, 
	\[
		u_l^{\top}u_m = \mathbbm{1}\{l = m\} c \sigma^2\log(n).
	\]

Therefore for any $0 < c < 1$, there exists a sufficiently large $n(c)$ such that for any $n \geq n(c)$, $4n^{-1}(n^c - 1) \leq 1/16$.  This combining with 
	 \eqref{eq:lecam} and \eqref{eq:deduction} provides the desired result.

\end{proof}

\begin{proof}[Proof of \Cref{lemma-error-opt}]
Let $P_0$ denote the joint distribution of the independent random variables $\{Y_i\}_{i=1}^n$, where
	\[
		Y_1, \ldots, Y_{\Delta}  \stackrel{i.i.d.}{\sim}  \mathcal{N}(0, \sigma^2)  \quad \text{and} \quad Y_{\Delta + 1}, \ldots, Y_n  \stackrel{i.i.d.}{\sim}  \mathcal{N}(\kappa, \sigma^2);
	\]
	and, similarly, let $P_1$ be the joint distribution of the independent random variables $\{Z_i\}_{i = 1}^n$ such that
	\[
		Z_1, \ldots, Z_{\Delta + \delta}  \stackrel{i.i.d.}{\sim}  \mathcal{N}(0, \sigma^2), \quad \text{and} \quad Z_{\Delta + \delta + 1}, \ldots, Z_n  \stackrel{i.i.d.}{\sim}  \mathcal{N}(\kappa, \sigma^2),
	\]
	where $\delta$ is a positive integer no larger than $n-1 - \Delta$.  Observe that $\eta(P_0) = \Delta$ and $\eta(P_1) = \Delta + \delta$.  By Le Cam's Lemma \citep[e.g.][]{yu1997assouad} and Lemma~2.6 in \cite{Tsybakov2009}, it holds that
	\[
		\inf_{\hat \eta} \sup_{P\in \mathcal{Q}} \mathbb{E}_P\bigl(|\hat \eta - \eta|\bigr)  \geq \delta\bigl\{1- d_{\mathrm{TV}}(P_0, P_1)\bigr\} \geq \frac{\delta}{2}\exp\left(-KL(P_0, P_1)\right), 
	\]
	where $KL(\cdot, \cdot)$ is the Kullback–Leibler divergence between two probability measures.
	
Since both $P_0$ and $P_1$ are product measures, it holds that 
	\[
		KL( P_0,P_1) =  \sum_{i \in \{\Delta + 1, \ldots, \Delta + \delta\} } KL(P_{0, i}, P_{1, i}) = \delta \frac{\kappa^2}{\sigma^2},
	\]	
	where $P_{0, i}$ and $P_{1, i}$ are the distributions of $Y_i$ and $Z_i$, respectively and the last identity follows from the fact that, if $P$ and $Q$ are the normal distributions with common variance $\sigma^2$ and means $\mu_1$ and $\mu_2$, respectively, then $K(P,Q) = \frac{ (\mu_1 - \mu_2 )^2}{\sigma^2}$. Thus,
	\begin{equation}\label{eq:second.lower}
		\inf_{\hat \eta} \sup_{P\in \mathcal Q^n} \mathbb{E}_P\bigl(|\hat \eta - \eta|\bigr) \geq  \frac{\delta}{2}\exp\left(-\delta\frac{\kappa^2}{\sigma^2}\right).
	\end{equation}	
	
Next, set $\delta = \min \{ \lceil \frac{\sigma^2}{\kappa^2} \rceil, n - 1 - \Delta\}$. 
By the assumption on $\zeta_n$, for all $n$ large enough we must have that $\delta = \lceil \frac{\sigma^2}{\kappa^2} \rceil.$
Indeed, if $ n - 1 - \Delta \leq \lceil \frac{\sigma^2}{\kappa^2} \rceil$ then, as $\Delta < n/2$, we must have that $\frac{\kappa^2}{\sigma^2} \leq \frac{1}{n-2-\Delta} < \frac{1}{n/2 - 2}$,
and, therefore, that
\[
\frac{\kappa^2 \Delta }{\sigma^2} < \frac{\Delta}{n/2-2} < \frac{n}{n/2 - 2} < 10,
\]
where we may assume that $n >4$. Since $\frac{\kappa^2 \Delta}{\sigma^2} \geq \zeta^2_n$ by assumption and $\zeta_n$ is diverging as $n \rightarrow \infty$, the above bound can only hold for finitely many $n$. The claimed bound now follows from \eqref{eq:second.lower}, for all $n$ large enough. 
\end{proof}

\section{Proofs of the Results in  \Cref{section:potts}}
\label{sec:app-1}

In this section, we provide technical details of the proof of \Cref{prop:1d localization}.  Recalling \Cref{assume:change}, for any change point $\eta_k$, observe that the interval $I = \{\eta_{k-1} + 1, \ldots, \eta_k\}$ contains one change point, but the signal $\{f_i\}_{i=1}^n$ is unchanged in $I$.  For convenience, in this section, any interval $I$ is said to contain a true change point if there exists $k \in \{1, \ldots, K\}$ such that $\{\eta_{k}, \eta_{k}+1\} \subset I$, where $|I| \geq 2$.  This convention ensures that if $I$ contains a true change point, then it is necessary that there exist $i, j \in I$ satisfying $f_i \neq f_j$. 

\begin{lemma}\label{lemma-pre}
Let $I_1$ and $I_2$ denote any two disjoint intervals of $\{1, \ldots, n\}$ and $I = I_1 \cup I_2$.  For any sequences $\{X_i\}_{i=1}^n, \{Y_i\}_{i=1}^n \subset \mathbb{R}$, it holds that
	\begin{equation}\label{eq-lemma2-1}
		\sum_{i \in I} \bigl(Y_i - \overline Y_{I}\bigr)^2 = \sum_{i \in I_1} \bigl(Y_i - \overline Y_{I_1}\bigr)^2 + \sum_{i \in I_2}\bigl(Y_i - \overline Y_{I_2}\bigr)^2 + \frac{|I_1| |I_2|}{|I_1| +|I_2|}  \bigl(\overline Y_{I_1} - \overline Y_{I_2}\bigr)^2,
	\end{equation}
	and 
	\begin{align}
	& \sum_{i \in I} \bigl(X_i - \overline X_{I}\bigr)\bigl(Y_i - \overline Y_{I}\bigr) \nonumber \\
	= & \sum_{i \in I_1}\bigl(X_i - \overline X_{I_1}\bigr)\bigl(Y_i -\overline Y_{I_1}\bigr) + \sum_{i \in I_2}\bigl(X_i - \overline X_{I_2}\bigr)\bigl(Y_i - \overline Y_{I_2}\bigr) + \frac{| I_1 | | I_2| }{|I_1| +|I_2|} \bigl(\overline X_{I_1} -\overline X_{I_2}\bigr) \bigl(\overline Y_{I_1} -\overline Y_{I_2}\bigr). \label{eq-lemma2-2}
	\end{align}
\end{lemma}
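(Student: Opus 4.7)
My plan is to treat both identities as instances of the classical within/between decomposition of a sum of squared deviations, using the obvious weighted-mean relation between $\overline Y_I$ and $\overline Y_{I_1}, \overline Y_{I_2}$.

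First, from $|I| \overline Y_I = |I_1| \overline Y_{I_1} + |I_2| \overline Y_{I_2}$ with $|I| = |I_1|+|I_2|$, one gets
\[
\overline Y_{I_j} - \overline Y_I = \frac{|I_{3-j}|}{|I_1|+|I_2|}\bigl(\overline Y_{I_j} - \overline Y_{I_{3-j}}\bigr), \qquad j \in \{1,2\}.
\]
For identity \eqref{eq-lemma2-1}, I would split the sum over $I$ according to $I_1 \cup I_2$ and add and subtract the corresponding local mean:
\[
\sum_{i \in I_j} (Y_i - \overline Y_I)^2 = \sum_{i \in I_j} (Y_i - \overline Y_{I_j})^2 + 2(\overline Y_{I_j} - \overline Y_I)\sum_{i \in I_j}(Y_i - \overline Y_{I_j}) + |I_j|(\overline Y_{I_j} - \overline Y_I)^2.
\]
The cross term vanishes because $\sum_{i \in I_j}(Y_i - \overline Y_{I_j}) = 0$ by definition of the mean. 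Summing over $j=1,2$ and plugging the displayed expression for $\overline Y_{I_j} - \overline Y_I$ gives the between-group contribution
\[
|I_1|(\overline Y_{I_1} - \overline Y_I)^2 + |I_2|(\overline Y_{I_2} - \overline Y_I)^2 = \frac{|I_1||I_2|^2 + |I_2||I_1|^2}{(|I_1|+|I_2|)^2}(\overline Y_{I_1} - \overline Y_{I_2})^2 = \frac{|I_1||I_2|}{|I_1|+|I_2|}(\overline Y_{I_1} - \overline Y_{I_2})^2,
\]
which is exactly the desired remainder.

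For identity \eqref{eq-lemma2-2}, the cleanest route is polarization: apply \eqref{eq-lemma2-1} to the sequences $X+Y$ and $X-Y$ and subtract, using $4ab = (a+b)^2 - (a-b)^2$ applied coordinatewise and to the relevant means. Alternatively, repeat the same add-and-subtract expansion as above but for the bilinear form: write $(X_i - \overline X_I)(Y_i - \overline Y_I)$ for $i \in I_j$ as
\[
\bigl[(X_i - \overline X_{I_j}) + (\overline X_{I_j} - \overline X_I)\bigr]\bigl[(Y_i - \overline Y_{I_j}) + (\overline Y_{I_j} - \overline Y_I)\bigr],
\]
expand, and observe that the two mixed terms $(\overline X_{I_j} - \overline X_I)\sum_{i \in I_j}(Y_i - \overline Y_{I_j})$ and $(\overline Y_{I_j} - \overline Y_I)\sum_{i \in I_j}(X_i - \overline X_{I_j})$ vanish. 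Summing over $j$ produces the bilinear analog
\[
|I_1|(\overline X_{I_1} - \overline X_I)(\overline Y_{I_1} - \overline Y_I) + |I_2|(\overline X_{I_2} - \overline X_I)(\overline Y_{I_2} - \overline Y_I) = \frac{|I_1||I_2|}{|I_1|+|I_2|}(\overline X_{I_1} - \overline X_{I_2})(\overline Y_{I_1} - \overline Y_{I_2})
\]
by the same algebraic simplification as in the scalar case.

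There is essentially no hard step here — the argument is a direct computation, and the only care needed is to invoke the defining orthogonality $\sum_{i \in I_j}(Y_i - \overline Y_{I_j}) = 0$ to kill the cross terms and to correctly combine the two between-group weights into $|I_1||I_2|/(|I_1|+|I_2|)$. I would present the scalar case in detail and then handle the bilinear case by polarization in one line, since that keeps the proof short and avoids repeating the expansion.
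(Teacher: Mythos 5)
Your proposal is correct and relies on the same core device as the paper's proof: add and subtract the within-group mean, use the orthogonality $\sum_{i\in I_j}(Y_i-\overline Y_{I_j})=0$ to kill the cross terms, and simplify the between-group weights. The paper works out the bilinear identity \eqref{eq-lemma2-2} directly by this expansion and states that \eqref{eq-lemma2-1} follows similarly, whereas you prove \eqref{eq-lemma2-1} first and offer polarization as a shortcut to \eqref{eq-lemma2-2}; either ordering is fine and the polarization remark is a pleasant small economy, but it is not a genuinely different argument.
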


\begin{proof}
Without loss of generality, let $I_1 = \{1, \ldots, n_1\}$ and $I_2 = \{n_1 + 1, \ldots, n = n_1 + n_2\}$.  For simplicity, denote $\overline{X} = \overline{X}_I$, $\overline{X}_1 = \overline{X}_{I_1}$ and $\overline{X}_2 = \overline{X}_{I_2}$.  The results \eqref{eq-lemma2-1} and \eqref{eq-lemma2-2} can be proved by similar arguments.  We will only show \eqref{eq-lemma2-2} here.

Observe that
	\begin{align*}
	& \sum_{i = 1}^{n} \bigl(X_i - \overline X\bigr)\bigl(Y_i - \overline Y\bigr)  = \sum_{i = 1}^{n_1} \left\{X_i - \overline{X}_1 + \frac{n_2\bigl(\overline{X}_1 - \overline{X}_2\bigr)}{n_1 + n_2}\right\}\left\{Y_i - \overline{Y}_1 + \frac{n_2\bigl(\overline{Y}_1 - \overline{Y}_2\bigr)}{n_1 + n_2}\right\} \\
	+ & \sum_{i = n_1 + 1}^{n} \left\{X_i - \overline{X}_2 - \frac{n_1\bigl(\overline{X}_1 - \overline{X}_2\bigr)}{n_1 + n_2}\right\}\left\{Y_i - \overline{Y}_2 - \frac{n_1\bigl(\overline{Y}_1 - \overline{Y}_2\bigr)}{n_1 + n_2}\right\} \\
	 = & \sum_{i = 1}^{n_1} \bigl(X_i - \overline X_1\bigr)\bigl(Y_i -\overline Y_1\bigr) + \sum_{i = n_1 + 1}^{n_2} \bigl(X_i - \overline X_2\bigr)\bigl(Y_i -\overline Y_2\bigr) + \frac{n_1n_2}{n_1 + n_2}\bigl(\overline{X}_1 - \overline{X}_2\bigr)\bigl(\overline{Y}_1 - \overline{Y}_2\bigr).
	\end{align*}

\end{proof}

\begin{lemma}\label{lem:max}
Assume that the sequence $\{Y_i\}_{i = 1}^n \subset \mathbb{R}$ satisfies \Cref{assume:change}.  It holds that 
	\begin{align*}
	\mathbb{P}\biggl\{\sup_{1\le a < b< c\le n} \sqrt{\frac{(b-a)(c-b)}{c-a}} \bigl|\overline Y_{(a+1,b]} - \overline f_{(a+1,b]} + \overline Y_{(b+1,c]} - \overline f_{(b+1,c]} \bigr| \leq C_{\mathcal{B}}\sigma \sqrt{\log(n)} \biggr\} \geq e\cdot n^{3-c_{\mathcal{B}}},
	\end{align*}
	where $c_{\mathcal{B}}$ is an absolute constant chosen to satisfy $c_{\mathcal{B}} > 3$ and $C_{\mathcal{B}} > 0$ only depends on $c_{\mathcal{B}}$.
\end{lemma}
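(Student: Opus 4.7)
The proof plan is to reduce the supremum to a union bound over the $O(n^3)$ triples $(a,b,c)$ and, for each fixed triple, exploit the fact that the quantity inside the absolute value is a linear combination of the independent centred sub-Gaussian variables $Z_i := Y_i - f_i$, whose $\psi_2$-norm is at most $\sigma$.

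\textbf{Step 1: Rewrite pointwise as a weighted sum.} For fixed $1 \le a < b < c \le n$, set
\[
W_{a,b,c} = \overline Y_{(a+1,b]} - \overline f_{(a+1,b]} + \overline Y_{(b+1,c]} - \overline f_{(b+1,c]} = \frac{1}{b-a}\sum_{i=a+1}^{b} Z_i + \frac{1}{c-b}\sum_{i=b+1}^{c} Z_i,
\]
so that $W_{a,b,c}$ is a weighted sum of independent mean-zero sub-Gaussians. By the standard rotation-invariance property of the $\psi_2$-norm for independent centred sub-Gaussians (e.g.\ Proposition 2.6.1 in Vershynin), there is an absolute constant $C_0$ with
\[
\|W_{a,b,c}\|_{\psi_2}^2 \le C_0 \sigma^2\left(\sum_{i=a+1}^{b}\frac{1}{(b-a)^2} + \sum_{i=b+1}^{c}\frac{1}{(c-b)^2}\right) = C_0 \sigma^2\left(\frac{1}{b-a} + \frac{1}{c-b}\right).
\]

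\textbf{Step 2: The magic cancellation.} Multiplying by the weight $\sqrt{(b-a)(c-b)/(c-a)}$ gives
\[
\left\|\sqrt{\tfrac{(b-a)(c-b)}{c-a}}\, W_{a,b,c}\right\|_{\psi_2}^2 \le C_0\sigma^2\,\frac{(b-a)(c-b)}{c-a}\left(\frac{1}{b-a}+\frac{1}{c-b}\right) = C_0\sigma^2.
\]
Thus, \emph{after} the normalisation, every member of the supremum family has $\psi_2$-norm at most $\sqrt{C_0}\,\sigma$, independently of $(a,b,c)$. This is the point on which the whole argument hinges, and is the step I would expect a reader to check carefully.

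\textbf{Step 3: Tail bound and union bound.} By the standard sub-Gaussian tail inequality, for every $t>0$ and every triple,
\[
\mathbb{P}\!\left(\sqrt{\tfrac{(b-a)(c-b)}{c-a}}\,|W_{a,b,c}| > t\right) \le e\cdot\exp\!\left(-\frac{t^2}{C_0\sigma^2}\right).
\]
Setting $t = C_{\mathcal B}\sigma\sqrt{\log n}$ with $C_{\mathcal B}^2 \ge C_0 c_{\mathcal B}$ makes each tail probability at most $e\cdot n^{-c_{\mathcal B}}$. Since the number of admissible triples $(a,b,c)$ with $1 \le a<b<c \le n$ is at most $n^3$, a union bound yields a failure probability at most $e\cdot n^{3-c_{\mathcal B}}$, giving the claimed bound. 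Choosing $c_{\mathcal B}>3$ and $C_{\mathcal B}$ accordingly completes the proof.

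The argument is essentially a classical maximal-inequality calculation; no real obstacle arises beyond noticing the cancellation in Step 2, which ensures that the union bound is taken over $n^3$ random variables of the \emph{same} sub-Gaussian scale, so that a $\sqrt{\log n}$ threshold suffices.
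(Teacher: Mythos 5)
Your proof is correct and takes essentially the same approach as the paper: the paper's proof invokes Hoeffding's inequality directly to give the tail bound for each fixed triple, where the uniform sub-Gaussian scale comes from exactly the cancellation you highlight in Step 2 (indeed $\frac{(b-a)(c-b)}{c-a}\bigl(\frac{1}{b-a}+\frac{1}{c-b}\bigr)=1$ exactly, not just $\le C_0$), and then takes a union bound over at most $n^3$ triples. One small remark: the probability inequality in the lemma as stated is missing the ``$1-$'' on the right-hand side, which both your argument and the paper's actually establish.
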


\begin{proof}
It follows from \Cref{assume:change} that for all $i \in \{1, \ldots, n\}$, $Y_i - f_i$ is a 	centred sub-Gaussian random variable with $\max_i\|Y_i - f_i\|_{\psi_2} \leq \sigma$.  Due to Hoeffding inequality \citep[see e.g.][]{vershynin2010introduction}, it holds that for any non-empty set $I \subset \{1, \ldots, n\}$ and any $\varepsilon > 0$,
	\[
	\mathbb{P}\bigl\{\bigl|\overline{Y}_I - \overline{f}_I\bigr| > \varepsilon \bigr\} \leq e \cdot \exp\left(- \frac{c|I|\varepsilon^2}{\sigma^2}\right),
	\] 
	and for any triple $i_1 < i_2 < i_3$ chosen in $\{1, \ldots, n\}$
	\begin{align*}
	\mathbb{P}\left\{\sqrt{\frac{(i_2-i_1)(i_3-i_2)}{i_3-i_1}} \bigl|\overline Y_{(i_1+1, i_2]} - \overline f_{(i_1+1, i_2]} + \overline Y_{(i_2+1, i_3]} - \overline f_{(i_2+1, i_3]} \bigr| \geq \varepsilon \right\} \leq  e \cdot \exp \left(-\frac{c\varepsilon^2}{\sigma^2}\right),
	\end{align*}
	where $c > 0$ is an absolute constant only depending on $\sigma$. The result follows from a union bound.
\end{proof}

For simplicity, in the rest of the proof, we will let $C_{\mathcal{B}} = 1$ and set $c_{\mathcal{B}} > 3$.  This will only affect the constant $C_\lambda$, and in the statement of \Cref{prop:1d localization}, we require $C_{\lambda} > 0$ to be large enough. 

Since the change points of $\widehat{u}$ are our change point estimators, with the error rate 
	\[
		\epsilon_k = C_{\epsilon}\sigma^2\log(n)/\kappa_k^2,
	\]
	we refer to $\eta_k$ as an undetected change point, if $\eta_k \in (s, e] \in \widehat{\mathcal{P}}(\widehat{u})$ and
	\begin{equation}\label{eq-appa-1}
	\epsilon_k - s = \epsilon_k - \epsilon_{k-1} - (s - \epsilon_{k-1}) \geq \Delta - C_{\epsilon}\sigma^2 \log(n)/\kappa_k^2 > \Delta/3,
	\end{equation}
	and similarly $e - \epsilon_k > \Delta/3$.  The first and second inequalities of \eqref{eq-appa-1} follow from Assumptions~\ref{assume:change} and \ref{assum-phase}, respectively.   In the rest of this section, let $\widehat{\mathcal{P}} = \widehat{\mathcal{P}}\bigl(\widehat{u}\bigr)$.

\subsection{Step 1: no more than two true change points}
\label{section:three changes}

In order to show that no $I \in \widehat{\mathcal{P}}$ contains more than two true change points, it suffices to show that no $I \in \widehat{\mathcal{P}}$ contains undetected change points, due to the minimal spacing $\Delta$ condition in \Cref{assume:change}.

\begin{lemma}\label{lemma:three changes} 
Let $\{Y_i\}_{i=1}^n$ satisfy Assumptions~\ref{assume:change} and \ref{assum-phase}, and $\lambda$ satisfy the condition
	\begin{equation}\label{eq-lemmma4-lambda}
		\sigma^2 \log(n) \le\lambda \le  \kappa^2 \Delta /48.
	\end{equation}
	Then, in the event $\mathcal{B}$, it holds that no $I \in \widehat{\mathcal{P}}$ contains any undetected change point.
\end{lemma}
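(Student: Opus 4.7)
The natural strategy is a contradiction argument: suppose some interval $I=(s,e]\in\widehat{\mathcal{P}}$ contains an undetected change point $\eta_k$, so that $\min(\eta_k-s,\,e-\eta_k) > \Delta/3$ by \eqref{eq-appa-1}. I would construct a strictly finer partition $\mathcal{P}'$ obtained from $\widehat{\mathcal{P}}$ by splitting $I$ at a carefully chosen point $t^\star \in (s,e)$, and then show that $G(\mathcal{P}',\{Y_i\}_{i=1}^n,\lambda) < G(\widehat{\mathcal{P}},\{Y_i\}_{i=1}^n,\lambda)$, contradicting the optimality of $\widehat{\mathcal{P}}$. By \Cref{lemma-pre}, splitting $I$ at $t^\star$ decreases the sum-of-squares part of $G$ by
\[
D := \frac{(t^\star - s)(e - t^\star)}{e - s}\bigl(\overline{Y}_{(s,t^\star]} - \overline{Y}_{(t^\star,e]}\bigr)^2,
\]
while increasing the penalty by $\lambda$, so the whole matter reduces to establishing $D > \lambda$.

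Taking $t^\star = \eta_k$ and writing $W_i := Y_i - f_i$, the triangle inequality gives $\sqrt{D} \geq \sqrt{(\eta_k - s)(e-\eta_k)/(e-s)}\,\bigl|\overline{f}_{(s,\eta_k]} - \overline{f}_{(\eta_k,e]}\bigr| - \sqrt{(\eta_k-s)(e-\eta_k)/(e-s)}\,\bigl|\overline{W}_{(s,\eta_k]} - \overline{W}_{(\eta_k,e]}\bigr|$. The event $\mathcal{B}$ (or its natural companion bound for the difference in place of the sum, which has the same variance and follows from the same Hoeffding-plus-union-bound argument used in \Cref{lem:max}) controls the second (noise) term by $\sigma\sqrt{C_\lambda\log n}$. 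In the simplest situation where $\eta_k$ is the unique true change point in $(s,e]$, $|\overline{f}_{(s,\eta_k]} - \overline{f}_{(\eta_k,e]}|$ equals $\kappa_k \geq \kappa$, and combining $ab/(a+b) \geq \min(a,b)/2$ with $\min(\eta_k-s,\,e-\eta_k) > \Delta/3$ yields the signal lower bound $\kappa\sqrt{\Delta/6}$. Under \Cref{assum-phase} this strictly dominates the noise term, giving $D \gtrsim \kappa^2\Delta$, and the hypothesis $\lambda \leq \kappa^2\Delta/48$ from \eqref{eq-lemmma4-lambda} then delivers $D > \lambda$.

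The main obstacle I anticipate is the case where $(s,e]$ contains several true change points besides $\eta_k$: cancellations across multiple jumps can drive $|\overline{f}_{(s,\eta_k]} - \overline{f}_{(\eta_k,e]}|$ far below $\kappa$, so the naive choice $t^\star=\eta_k$ need not work. I would handle this by choosing $t^\star$ to be an argmax of $|\widetilde f^{s,e}_t|$ over $t\in(s,e)$: a standard piecewise-constant CUSUM lower bound shows that, as soon as $(s,e]$ contains one change point sitting at buffer $\geq \Delta/3$ from both endpoints, this maximum is always $\gtrsim \kappa\sqrt{\Delta}$, regardless of how many additional change points are present in $I$. With $t^\star$ chosen this way the signal-versus-noise comparison of the previous paragraph goes through verbatim and we again conclude $D>\lambda$, completing the contradiction.
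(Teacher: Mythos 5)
There is a genuine gap in the multi-change-point case, which is exactly where the difficulty lies. Your plan for the one–change-point case is sound, but the patch you propose for the general case — split at $t^\star = \argmax_t |\widetilde f^{s,e}_t|$ and invoke "a standard piecewise-constant CUSUM lower bound" giving $\max_t|\widetilde f^{s,e}_t| \gtrsim \kappa\sqrt{\Delta}$ "regardless of how many additional change points are present" — rests on a false bound. The relevant CUSUM lower bound (Lemma~\ref{lemma-step1-1}, i.e.\ Venkatraman's Lemma~2.4) gives only $\max_t|\widetilde f^{s,e}_t| \geq (c_1/2)\kappa\Delta(e-s)^{-1/2}$. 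This decays in $(e-s)$: it is $\gtrsim\kappa\sqrt{\Delta}$ only when $e-s\lesssim\Delta$, and nothing prevents the block $I=(s,e]$ from being long — it could be all of $(0,n]$. With alternating jumps of sizes $\pm\kappa$ at spacing $\Delta$ the global CUSUM is of order $\kappa\Delta/\sqrt{e-s}$, not $\kappa\sqrt{\Delta}$; your signal-versus-noise comparison then fails because $\sqrt{D}\geq (c_1/2)\kappa\Delta(e-s)^{-1/2}-\gamma$ can easily be negative. So a \emph{single} split at the CUSUM maximizer cannot in general beat one unit of penalty $\lambda$, and the contradiction does not close.

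The paper's proof avoids this trap by paying more penalty in exchange for a clean, interval-length–independent signal term. It splits $I$ into \emph{four} sub-intervals $I_1 = (s, \eta_k - \Delta/3]$, $I_2 = (\eta_k - \Delta/3, \eta_k]$, $I_3 = (\eta_k, \eta_k + \Delta/3]$, $I_4 = (\eta_k + \Delta/3, e]$, incurring a penalty increase of $3\lambda$ rather than $\lambda$. The crucial gain is the term $\frac{|I_2||I_3|}{|I_2|+|I_3|}(\overline Y_{I_2}-\overline Y_{I_3})^2$ from \Cref{lemma-pre}: because $|I_2|=|I_3|=\Delta/3$ and the minimal spacing is $\Delta$, the mean $f$ is \emph{constant} on each of $I_2$ and $I_3$, so $\overline f_{I_2}-\overline f_{I_3}=f_{\eta_k}-f_{\eta_{k+1}}$ has magnitude exactly $\kappa_k$. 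The signal contribution is therefore $\gtrsim\kappa_k^2\Delta$ no matter how long $(s,e]$ is or how many other change points it contains, and the noise on these two short intervals is controlled by $\mathcal{B}$ as in your argument. Since $\kappa_k^2\Delta/12 - 4\lambda > 0$ under \eqref{eq-lemmma4-lambda}, the contradiction goes through. If you want to repair your write-up, you could keep the single-change case as a warm-up but should replace the "argmax of the CUSUM" step with this local four-way split, or otherwise justify a CUSUM lower bound that does not deteriorate with $e-s$ (no such bound holds for the global CUSUM).
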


\begin{proof} 
We first point out that due to \Cref{assum-phase}, \eqref{eq-lemmma4-lambda} is not an empty set.

For the sake of contradiction, suppose that there exists $I \in \widehat{\mathcal{P}}$ containing an undetected change point $\eta_k$, i.e.,
	\begin{equation}\label{eq-lemma4-1}
		\min\{e-(\eta_k+1), \eta_k-s \} > \Delta /3.
	\end{equation}
	
Denote 	
	\begin{align*}
		I_1 = (s, \eta_{k} - \Delta/3], \quad  I_2 = (\eta_{k} -\Delta/3, \eta_k], \quad  I_3 = (\eta_k, \eta_{k} + \Delta/3], \quad \text{and} \quad I_4 = (\eta_{k} + \Delta/3 , e],
	\end{align*}
	none of which is empty due to \eqref{eq-lemma4-1}.

Let $\widetilde{\mathcal{P}}$ be such that
	\[
		\widetilde{\mathcal{P}} = \widehat{\mathcal{P}} \cup \{I_1, I_2, I_3, I_4\} \setminus \{I\},
	\]
	and $\widetilde{u}$ be the piecewise constant vector induced by $\widetilde{\mathcal{P}}$.  By the definition of $\widehat  u$, it holds that
	\[
		H\bigl(\widehat{u}, \{Y_i\}_{i=1}^n, \lambda\bigr) \leq H\bigl(\widetilde{u}, \{Y_i\}_{i=1}^n, \lambda\bigr).
	\]
	
Since $\widetilde{\mathcal{P}}$ is a refinement of $\widehat{\mathcal{P}}$ and we have assumed in \Cref{assume:change} that the distributions of $Y_i$'s have continuous density functions, it follows that 
	\[
		\lambda( \bigl\|D\widehat{u}\bigr\|_0 - \bigl\|D\widetilde{u}\bigr\|_0 )= -3\lambda.
	\]
	Then
	\begin{align}
		0 & \geq H\bigl(\widehat{u}, \{Y_i\}_{i=1}^n, \lambda\bigr) - H\bigl(\widetilde{u}, \{Y_i\}_{i=1}^n, \lambda\bigr) \nonumber \\
		& = -3\lambda + \sum_{i\in I } (Y_i - \overline Y_{I})^2 - \sum_{i\in I_1 } (Y_i - \overline Y_{I_1})^2 - \sum_{i\in I_2 } (Y_i - \overline Y_{I_2})^2-\sum_{i\in I_3 } (Y_i - \overline Y_{I_3})^2-\sum_{i\in I_4 } (Y_i - \overline Y_{I_4})^2 \nonumber \\
		& \geq -3\lambda + \frac{| I_2 | | I_3| }{|I_2| +|I_3|}  (\overline Y_{I_2} -\overline Y_{I_3})^2\nonumber \\
		& = -3\lambda +  \frac{| I_2 | | I_3| }{|I_2| +|I_3|}\bigl\{(\overline Y_{I_2} - f_{\eta_{k}}) - (\overline Y_{I_3} - f_{\eta_{k+1}}) + (f_{\eta_{k}} - f_{\eta_{k+1}}) \bigr\}^2\nonumber \\
		& \geq -3\lambda + \frac{\Delta}{12} (f_{\eta_{k}} - f_{\eta_{k+1}})^2 -  \frac{| I_2 | | I_3| }{|I_2| +|I_3|}\bigl\{(\overline Y_{I_2} - f_{\eta_{k}}) - (\overline Y_{I_3} - f_{\eta_{k+1}})\bigr\}^2 \nonumber \\
		& \geq -4\lambda + \frac{\Delta}{12}\kappa_k^2 \nonumber \\
		& > 0, \label{eq-lemma4-contra}
	\end{align}
	where the second inequality follows from \eqref{eq-lemma2-1} by first splitting $I = \{I_1, I_2, I_3\} \cup \{I_4\}$, then $\{I_1, I_2, I_3\} = \{I_1\} \cup \{I_2, I_3\}$ and $\{I_2, I_3\} = \{I_2\} \cup \{I_3\}$; the third inequality follows from the observation that $(x + y)^2 \geq x^2/2 - y^2$ and letting $x = f_{\eta_{k}} - f_{\eta_{k+1}}$, $y = (\overline Y_{I_2} - f_{\eta_{k}}) - (\overline Y_{I_3} - f_{\eta_{k+1}})$; the fourth inequality follows from the definition of $\mathcal{B}$ and \eqref{eq-lemmma4-lambda}; and the last inequalities is due to \eqref{eq-lemmma4-lambda}. 
	
Since \eqref{eq-lemma4-contra} is a contradiction, we conclude that there is no interval containing undetected change point. 

\end{proof}

\subsection{Step 2: exactly two true change points}\label{section:two changes}

\begin{lemma}\label{lemma:two change localization}
Let $\{Y_i\}_{i=1}^n$ satisfy Assumptions~\ref{assume:change} and \ref{assum-phase} and set $\lambda = C_{\lambda}\sigma^2\log(n)$,  where $C_{\lambda} > 1$. In the event $\mathcal{B}$, it holds that if $I = (s, e] \in \widehat{\mathcal{P}}$ contains exactly two change points, say $\eta_k$ and $\eta_{k+1}$, then 
	\[
		\eta_k - s + 1 \leq 12\lambda/\kappa_k^2, \mbox{ and } e -\eta_{k+1} \leq 12\lambda/\kappa_{k+1}^2.
	\]
\end{lemma}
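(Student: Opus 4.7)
The plan is to argue by contradiction, in the same spirit as the proof of \Cref{lemma:three changes}, by constructing a strictly better refinement of $\widehat{\mathcal{P}}$. Assume toward contradiction that $\alpha_1 := \eta_k - s > 12\lambda/\kappa_k^2 - 1$; the bound on $e - \eta_{k+1}$ will follow by an exactly symmetric argument focused on the right boundary. First I would invoke \Cref{lemma:three changes}: since $I \in \widehat{\mathcal{P}}$, neither $\eta_k$ nor $\eta_{k+1}$ is undetected, so each lies within $\Delta/3$ of the boundary of $(s, e]$. Combined with the minimal spacing $\eta_{k+1} - \eta_k \geq \Delta$, this forces $\alpha_1 \leq \Delta/3$, $\alpha_3 := e - \eta_{k+1} \leq \Delta/3$, and $\alpha_2 := \eta_{k+1} - \eta_k \geq \Delta \geq 3\alpha_1$. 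Under \Cref{assum-phase} with $C_{\mathrm{SNR}}$ sufficiently large, the contradiction hypothesis $\alpha_1 > 12\lambda/\kappa_k^2 - 1$ remains compatible with $\alpha_1 \leq \Delta/3$, so the argument is not vacuous.

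Next, refine $\widehat{\mathcal{P}}$ by adding two cuts inside $I$, at $\eta_k$ and $\eta_{k+1}$, producing three intervals $J_1 = (s,\eta_k]$, $J_2 = (\eta_k,\eta_{k+1}]$, $J_3 = (\eta_{k+1},e]$; call this $\widetilde{\mathcal{P}}$ and $\widetilde u$ the induced piecewise-constant vector. The penalty grows by exactly $2\lambda$. Two applications of the Pythagorean-type identity \eqref{eq-lemma2-1} give
\[
\sum_{i\in I}(Y_i-\overline Y_I)^2 - \sum_{j=1}^3\sum_{i\in J_j}(Y_i-\overline Y_{J_j})^2 = \frac{\alpha_1\alpha_2}{\alpha_1+\alpha_2}\bigl(\overline Y_{J_1}-\overline Y_{J_2}\bigr)^2 + \frac{(\alpha_1+\alpha_2)\alpha_3}{\alpha_1+\alpha_2+\alpha_3}\bigl(\overline Y_{J_1\cup J_2}-\overline Y_{J_3}\bigr)^2.
\]
I would discard the non-negative second summand and focus on the first. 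Since $\overline f_{J_1} = f_{\eta_k}$ and $\overline f_{J_2} = f_{\eta_k+1}$ differ in absolute value by exactly $\kappa_k$, the inequality $(x+y)^2 \geq x^2/2 - y^2$ applied with $x = \overline f_{J_1}-\overline f_{J_2}$ and $y = (\overline Y_{J_1}-\overline f_{J_1})-(\overline Y_{J_2}-\overline f_{J_2})$, combined with event $\mathcal{B}$ applied to the triple $(s,\eta_k,\eta_{k+1})$ (controlling the weighted signed noise exactly as in the proof of \Cref{lemma:three changes}), yields
\[
\frac{\alpha_1\alpha_2}{\alpha_1+\alpha_2}\bigl(\overline Y_{J_1}-\overline Y_{J_2}\bigr)^2 \geq \frac{\alpha_1\alpha_2}{2(\alpha_1+\alpha_2)}\kappa_k^2 - \lambda \geq \frac{3\alpha_1\kappa_k^2}{8} - \lambda,
\]
where the last step uses $\alpha_2 \geq 3\alpha_1$.

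Optimality of $\widehat u$ then forces $0 \geq H(\widehat u,\{Y_i\},\lambda) - H(\widetilde u,\{Y_i\},\lambda) \geq -2\lambda + 3\alpha_1\kappa_k^2/8 - \lambda$, i.e.\ $\alpha_1 \leq 8\lambda/\kappa_k^2$, contradicting the contradictory hypothesis. The matching bound $e - \eta_{k+1} \leq 12\lambda/\kappa_{k+1}^2$ is obtained by an identical argument that keeps instead the $(\overline Y_{J_2} - \overline Y_{J_3})^2$ term in the alternative iterated decomposition of the same sum-of-squares decrease. The main obstacle, and the reason for choosing the two-cut refinement, is that a single cut at $\eta_k$ would leave $\overline f_{(\eta_k,e]}$ as a weighted mixture of $f_{\eta_{k+1}}$ and $f_{\eta_{k+2}}$, so that the effective signal gap $\overline f_{J_1} - \overline f_{(\eta_k,e]}$ can shrink well below $\kappa_k$ (or even vanish) when $\kappa_{k+1}$ is comparable to $\kappa_k$ and of the opposite sign relative to the jump at $\eta_k$. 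The double cut isolates the jump at $\eta_k$ cleanly between $J_1$ and $J_2$, and the extra $\lambda$ of penalty incurred is harmless since the signal term $\alpha_1\kappa_k^2$ is of order $\lambda$ precisely at the boundary where we need to derive the contradiction.
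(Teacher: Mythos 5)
Your proof is correct and follows essentially the same route as the paper's: refine $\widehat{\mathcal{P}}$ by cutting $I$ at both $\eta_k$ and $\eta_{k+1}$, apply the Pythagorean identity of \Cref{lemma-pre} to peel off one boundary cross-term, lower-bound it via $(x+y)^2 \geq x^2/2 - y^2$ together with the event $\mathcal{B}$, and compare against the $2\lambda$ penalty increase. The only cosmetic difference is that you invoke \Cref{lemma:three changes} to pin $\alpha_1 \leq \Delta/3$ (hence $\alpha_2 \geq 3\alpha_1$) before lower-bounding $\frac{\alpha_1\alpha_2}{\alpha_1+\alpha_2}$, whereas the paper reaches the same conclusion by a direct case-split on the relative sizes of the two segments flanking the chosen cut; the resulting constants differ immaterially (and the paper's asymmetric ``$+1$'' in the statement is a typo in any case).
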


\begin{proof}
Let $I_1 = (s, \eta_k]$, $I_2 = (\eta_k, \eta_{k+1}]$ and $I_3 = (\eta_{k+1}, e]$.  Since $I$ contains exactly two true change points, none of $I_1$, $I_2$ or $I_3$ is an empty set, and $\{f_i\}_{i=1}^n$ is constant on $I_1$, $I_2$ and $I_3$.  Denote by $\widehat{u}$ the solution of \eqref{eq-H} with inputs $\{Y_i\}_{i=1}^n$ and $\lambda$, and by $\widehat{\mathcal{P}}$ the interval partition induced by $\widehat{u}$.

Let $\widetilde{\mathcal{P}}_1$ and $\widetilde{\mathcal{P}}_2$ be
	\[
		\widetilde{\mathcal{P}}_1 = \widehat{\mathcal{P}} \cup \{I_1, I_2 \cup I_3\} \setminus \{I\}, \, \mbox{ and } \widetilde{\mathcal{P}}_2 = \widehat{\mathcal{P}} \cup \{I_1, I_2, I_3\} \setminus \{I\},
	\]
	respectively; and let $\widetilde{u}_1$ and $\widetilde{u}_2$ be the piecewise-constant vectors induced by $\widetilde{\mathcal{P}}_1$ and $\widetilde{\mathcal{P}}_2$, respectively.  

It follows from \Cref{lemma-pre} that 
	\[
		H\bigl(\widetilde{u}_1, \{Y_i\}_{i=1}^n, \lambda\bigr) - H\bigl(\widehat{u}, \{Y_i\}_{i=1}^n, \lambda\bigr) = \lambda - \frac{|I_1| |I_2 \cup I_3|}{|I_1| + |I_2 \cup I_3|}\bigl(\overline Y_{I_1} -\overline Y_{I_2\cup I_3}\bigr)^2
	\]
	and
	\[
		H\bigl(\widetilde{u}_2, \{Y_i\}_{i=1}^n, \lambda\bigr) - H\bigl(\widetilde{u}_1, \{Y_i\}_{i=1}^n, \lambda\bigr) =  \lambda - \frac{|I_2| |I_3|}{|I_2| +|I_3|}\bigl(\overline Y_{I_2} - \overline Y_{I_3}\bigr)^2.
	\]
	Then,
	\begin{align*}
	0 & \leq H\bigl(\widetilde{u}_2, \{Y_i\}_{i=1}^n, \lambda\bigr) -  H\bigl(\widehat{u}, \{Y_i\}_{i=1}^n, \lambda\bigr)\\
	&  \leq 2\lambda -  \frac{|I_2| |I_3|}{|I_2| +|I_3|}\bigl(\overline Y_{I_2} - \overline Y_{I_3}\bigr)^2 \\
	& \leq 2\lambda - \frac{1}{2}\frac{|I_2| |I_3|}{|I_2| +|I_3|}\bigl\{\bigl(f_{\eta_{k+1}} - f_{\eta_{k+2}}\bigr)^2 - 2\bigl(\overline Y_{I_2} - f_{\eta_{k+1}} - \overline Y_{I_3} + f_{\eta_{k+2}}\bigr)^2\bigr\} \\
	& \leq 2 \lambda -  \frac{1}{2}\frac{| I_2 | |  I_3| }{|I_2| +|I_3|}  \kappa_{k+1}^2 + \lambda,
	\end{align*}
	where the third inequality uses the same argument in the third inequality of \eqref{eq-lemma4-contra}, and the last inequality follows from the definition of $\mathcal{B}$ and \Cref{assume:change}.

If $|I_2| \leq |I_3|$, then
	\[
		\Delta/2 \leq |I_2|/2 \leq \frac{| I_2 | |  I_3| }{|I_2| +|I_3|}   \leq 6\lambda/\kappa_{k+1}^2,
	\]
	which contradicts \Cref{assum-phase}.  Therefore it must hold that $|I_2| > |I_3|$, which implies 
	\[
		|I_3|/2 \leq \frac{| I_2 | |  I_3| }{|I_2| +|I_3|} \leq 6\lambda/\kappa_{k+1}^2.
	\]
	Then $e - \eta_{k+1} \leq 12\lambda/\kappa_{k+1}^2$.  It can be shown similarly that $\eta_k - s + 1 \leq 12\lambda/\kappa_k^2$.
	
\end{proof}

\subsection{Step 3: one and only one change point}
\label{section:one change}

Let $I_1 = (s, e_1] \in \widehat{\mathcal{P}}$ contain exactly one true change point, namely $\eta_k$.  With our convention set at the beginning of \Cref{sec:app-1}, it holds that
	\begin{equation}\label{eq-one-change-se1}
		\eta_{k-1}+1 \leq s \leq \eta_k < \eta_{k} + 1 \leq e_1 \leq \eta_{k+1}.
	\end{equation}
	Denote $\delta = e_1 - \eta_{k}$ and $\epsilon = \eta_{k} - (s-1)$.  Without loss of generality, we assume that
	\begin{align}\label{eq:symmetry}
		0 < \epsilon \le \delta. 
	\end{align}
	We are to show that there exists an absolute constant $C > 8$ such that 	
	\begin{equation}\label{eq:one change point localization-1}
		\epsilon = |\eta_k- s + 1| \leq C\lambda/\kappa_k^2
	\end{equation}
	and
	\begin{equation}\label{eq:one change point localization-2}
		\epsilon_1 = |\eta_{k+1} - e_1| \leq C\lambda/\kappa_{k+1}^2	.
	\end{equation}

Equation~\eqref{eq:one change point localization-1} will be shown in \Cref{lemma:one side of one change point}.  To show \eqref{eq:one change point localization-2}, we rely on the following arguments (see \Cref{fig-l0-step3} for an illustration):
	\begin{itemize}
	\item [(i)] Let  $I_2 = (e_1, e_2]$ be the interval to the immediate right of $I_1$ in $\widehat{\mathcal{P}}$.  It must hold that
		\begin{equation}\label{eq-one-change-e1e2}
		e_1 \leq \eta_{k+1} < \eta_{k+1} + 1 \leq e_2.
		\end{equation}
		This will be shown in \Cref{lemma:one change then no change}.
	\item [(ii)] It follows from \Cref{section:three changes} that there are at most two true change points in $(e_1, e_2]$.  If there are exactly two true change points, then due to \Cref{section:two changes}, \eqref{eq:one change point localization-2} holds. 
	\item [(iii)] If $e_2 \leq \eta_{k+2}$, then we let $\epsilon_1 = \eta_{k+1} - (e - 1)$ and $\delta_1 = e_2 - \eta_{k+1}$. \Cref{lemma-eps-delta} shows that $\delta_1 < \epsilon_1$ is impossible. Thus, $\epsilon_1 \leq \delta_1$ and we then rely on \Cref{lemma:one side of one change point}. 
	\end{itemize}

\vskip 3mm
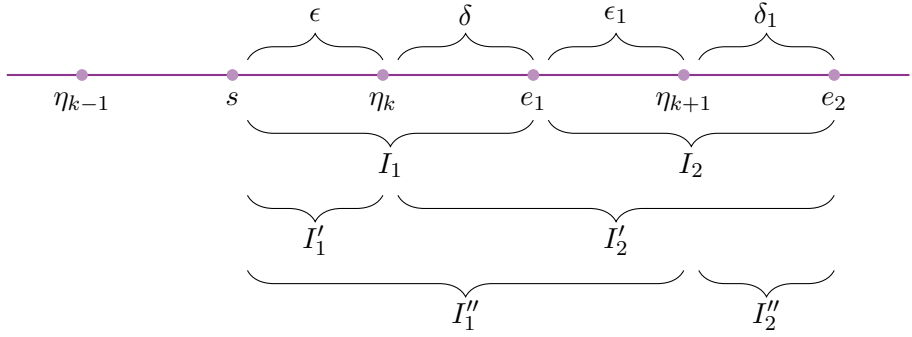
\begin{figure}
\begin{center}
\begin{tikzpicture}
\draw[Fuchsia!100, thick]  (0, 0) -- (12, 0);

\filldraw [Fuchsia!50] (1,0) circle (2pt);	
\draw (1, -0.1) node[below] {$\eta_{k-1}$};
\filldraw [Fuchsia!50] (3,0) circle (2pt);	
\draw (3, -0.1) node[below] {$s$};
\filldraw [Fuchsia!50] (5,0) circle (2pt);	
\draw (5, -0.1) node[below] {$\eta_k$};
\filldraw [Fuchsia!50] (7,0) circle (2pt);	
\draw (7, -0.1) node[below] {$e_1$};
\filldraw [Fuchsia!50] (9,0) circle (2pt);	
\draw (9, -0.1) node[below] {$\eta_{k+1}$};
\filldraw [Fuchsia!50] (11,0) circle (2pt);	
\draw (11, -0.1) node[below] {$e_2$};

\draw [decorate,decoration={brace,amplitude=10pt},xshift=0pt,yshift=0pt]
(3.2,0.2) -- (5, 0.2) node [black,midway,xshift=0cm, yshift = 0.6cm] {$\epsilon$};
\draw [decorate,decoration={brace,amplitude=10pt},xshift=0pt,yshift=0pt]
(5.2,0.2) -- (7, 0.2) node [black,midway,xshift=0cm, yshift = 0.6cm] {$\delta$};
\draw [decorate,decoration={brace,amplitude=10pt},xshift=0pt,yshift=0pt]
(7.2,0.2) -- (9, 0.2) node [black,midway,xshift=0cm, yshift = 0.6cm] {$\epsilon_1$};
\draw [decorate,decoration={brace,amplitude=10pt},xshift=0pt,yshift=0pt]
(9.2,0.2) -- (11, 0.2) node [black,midway,xshift=0cm, yshift = 0.6cm] {$\delta_1$};

\draw [decorate,decoration={brace,amplitude=10pt, mirror},xshift=0pt,yshift=0pt]
(3.2, -0.6) -- (7, -0.6) node [black,midway,xshift=0cm, yshift = -0.6cm] {$I_1$};
\draw [decorate,decoration={brace,amplitude=10pt, mirror},xshift=0pt,yshift=0pt]
(7.2, -0.6) -- (11, -0.6) node [black,midway,xshift=0cm, yshift = -0.6cm] {$I_2$};

\draw [decorate,decoration={brace,amplitude=10pt, mirror},xshift=0pt,yshift=0pt]
(3.2, -1.6) -- (5, -1.6) node [black,midway,xshift=0cm, yshift = -0.6cm] {$I_1'$};
\draw [decorate,decoration={brace,amplitude=10pt, mirror},xshift=0pt,yshift=0pt]
(5.2, -1.6) -- (11, -1.6) node [black,midway,xshift=0cm, yshift = -0.6cm] {$I_2'$};

\draw [decorate,decoration={brace,amplitude=10pt, mirror},xshift=0pt,yshift=0pt]
(3.2, -2.6) -- (9, -2.6) node [black,midway,xshift=0cm, yshift = -0.6cm] {$I_1''$};
\draw [decorate,decoration={brace,amplitude=10pt, mirror},xshift=0pt,yshift=0pt]
(9.2, -2.6) -- (11, -2.6) node [black,midway,xshift=0cm, yshift = -0.6cm] {$I_2''$};

\end{tikzpicture}
\end{center}
\caption{Illustrations of the interval constructions used in the Step 3 in the proof of \Cref{prop:1d localization}. \label{fig-l0-step3}}
\end{figure}

\begin{lemma}\label{lemma:one side of one change point}
Let $\{Y_i\}_{i=1}^n$ satisfy Assumptions~\ref{assume:change} and \ref{assum-phase} and set $\lambda \geq C_{\lambda}\sigma^2\log(n)$, with $C_{\lambda} \geq 1$. In the event $\mathcal{B}$, it holds that if $I_1 = (s, e_1] \in \widehat{\mathcal{P}}$ contains exactly one change point, say $\eta_k$, then
	\[
		\min\bigl\{|J_1|,\, |J_2|\bigr\} \leq  8\lambda/\kappa_k^2,
	\]
	where $J_1 = (s, \eta_k]$ and $J_2 = (\eta_k, e_1]$.
\end{lemma}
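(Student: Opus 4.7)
The plan is to mimic the refinement-and-comparison argument already used in the proofs of Lemmas~\ref{lemma:three changes} and \ref{lemma:two change localization}. Construct a candidate partition
\[
\widetilde{\mathcal P} = \widehat{\mathcal P} \cup \{J_1, J_2\} \setminus \{I_1\}
\]
that differs from $\widehat{\mathcal P}$ only by cutting $I_1$ at the true change point $\eta_k$, and let $\widetilde u$ be the corresponding piecewise-constant vector. Since $\widetilde{\mathcal P}$ strictly refines $\widehat{\mathcal P}$ on $I_1$, the penalty increases by exactly $\lambda$, and by optimality $H(\widehat u,\{Y_i\},\lambda) \leq H(\widetilde u,\{Y_i\},\lambda)$. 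Applying the sum-of-squares decomposition in \Cref{lemma-pre} to $I_1 = J_1 \cup J_2$ rearranges this to
\[
\frac{|J_1|\,|J_2|}{|J_1|+|J_2|}\bigl(\overline{Y}_{J_1}-\overline{Y}_{J_2}\bigr)^2 \leq \lambda.
\]

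Next I would exploit that $I_1$ contains only the change point $\eta_k$, so the signal is constant on each of $J_1$ and $J_2$, and in particular $\overline{f}_{J_1}-\overline{f}_{J_2} = f_{\eta_k}-f_{\eta_k+1}$, whose absolute value is exactly $\kappa_k$. Writing
\[
\overline{Y}_{J_1}-\overline{Y}_{J_2} = \bigl(\overline{f}_{J_1}-\overline{f}_{J_2}\bigr) + \bigl(\overline{Y}_{J_1}-\overline{f}_{J_1} - \overline{Y}_{J_2}+\overline{f}_{J_2}\bigr)
\]
and using $(x+y)^2 \geq x^2/2 - y^2$ (exactly as in the displays leading to \eqref{eq-lemma4-contra} and in the proof of \Cref{lemma:two change localization}) yields
\[
\frac{|J_1|\,|J_2|}{|J_1|+|J_2|}\bigl(\overline{Y}_{J_1}-\overline{Y}_{J_2}\bigr)^2 \geq \frac{1}{2}\frac{|J_1|\,|J_2|}{|J_1|+|J_2|}\kappa_k^2 - \frac{|J_1|\,|J_2|}{|J_1|+|J_2|}\bigl(\overline{Y}_{J_1}-\overline{f}_{J_1}-\overline{Y}_{J_2}+\overline{f}_{J_2}\bigr)^2.
\]
On the event $\mathcal B$ the last term is bounded by $\sigma^2\log(n) \leq \lambda$ (taking the triple $(a,b,c) = (s,\eta_k,e_1)$ in the definition of $\mathcal B$, the two noise averages appear with opposite signs, which is the same sub-Gaussian concentration as the version displayed in \Cref{lem:max} up to a symmetry of signs).

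Combining the two inequalities gives
\[
\frac{1}{2}\frac{|J_1|\,|J_2|}{|J_1|+|J_2|}\kappa_k^2 \leq 2\lambda,
\]
so that $\tfrac{|J_1||J_2|}{|J_1|+|J_2|} \leq 4\lambda/\kappa_k^2$. The proof finishes with the elementary harmonic-mean bound $\min\{|J_1|,|J_2|\} \leq 2\,\tfrac{|J_1||J_2|}{|J_1|+|J_2|}$, which yields $\min\{|J_1|,|J_2|\} \leq 8\lambda/\kappa_k^2$. No step is really the main obstacle here; the only mild subtlety is making sure one applies the concentration event $\mathcal B$ to the correct triple $(a,b,c)=(s,\eta_k,e_1)$ and to the right signed combination of the two empirical noise averages.
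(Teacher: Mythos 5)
Your proof is correct and follows essentially the same argument as the paper: you refine $\widehat{\mathcal P}$ by cutting $I_1$ at $\eta_k$, invoke the optimality of $\widehat u$ together with the ANOVA identity of \Cref{lemma-pre}, lower-bound the resulting mean-difference term via $(x+y)^2\ge x^2/2-y^2$ and the event $\mathcal B$, and finish with the harmonic-mean bound $\min\{|J_1|,|J_2|\}\le 2|J_1||J_2|/(|J_1|+|J_2|)$. Your aside about the sign of the two noise averages in $\mathcal B$ is apt -- the event as stated uses a $+$ but every application in the paper uses the difference, and the sub-Gaussian bound is indeed indifferent to this sign.
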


\begin{proof}
Observe that neither $J_1$ nor $J_2$ is empty by definition, and that $\{f_i\}_{i=1}^n$ is constant within $J_1$ and $J_2$, respectively.  Let $\widetilde{\mathcal{P}}$ be such that
	\[
		\widetilde{\mathcal{P}} = \widehat{\mathcal{P}} \cup \{J_1, J_2\} \setminus \{I_1\},
	\]
	and let $\widetilde{u}$ be the piecewise-constant vector induced by $\widetilde{\mathcal{P}}$.  
	
Recall that $\mathbb{E}(\overline{Y}_{J_1}) = f_{\eta_k}$ and $\mathbb{E}(\overline{Y}_{J_2}) = f_{\eta_{k+1}}$.  Without loss of generality, assume $f_{\eta_{k+1}} = f_{\eta_k} + \kappa_k$.  Thus,
	\begin{align*}
		0 & \geq H\bigl(\widehat{u}, \{Y_i\}_{i=1}^n, \lambda\bigr) - H\bigl(\widetilde{u}, \{Y_i\}_{i=1}^n, \lambda\bigr) \\
		& = -\lambda + \sum_{i\in I_1}(Y_i - \overline Y_{I_1})^2 - \sum_{i \in J_1}(Y_i - \overline Y_{J_1})^2 - \sum_{i \in J_2}(Y_i - \overline Y_{I_2})^2 \\
		& = -\lambda + \frac{|J_1| |J_2|}{|I_1|}(\overline Y_{J_1} -\overline Y_{J_2})^2\\
		& = -\lambda + \frac{|J_1| |J_2|}{|I_1|} \bigl\{(\overline Y_{J_1} - f_{\eta_k}) - (\overline Y_{J_2} - f_{\eta_k} - \kappa_k) -\kappa_k \bigr\}^2 \\
		& \geq -\lambda + \frac{|J_1| |J_2|}{2|I_1|} \bigl\{\kappa_k^2 - 2(\overline Y_{J_1} - f_{\eta_k} - \overline Y_{J_2} + f_{\eta_{k+1}})^2 \bigr\} \\
		& \geq -2\lambda + \frac{|J_1| |J_2|}{2|I_1|}\kappa_k^2,
	\end{align*}
	where the second identity follows from \eqref{eq-lemma2-1}, the second inequality from the fact that $(x-y)^2 \geq y^2/2 - x^2$ with $x = \kappa_k$ and $y = (\overline Y_{J_1} - f_{\eta_k}) - (\overline Y_{J_2} - f_{\eta_k} - \kappa_k)$, and the last inequality from the definitions of the event $\mathcal{B}$ and the choice of $\lambda$.  Therefore,	
	\[
		\min\{|J_1|, \, |J_2|\}\kappa^2_k/8 \leq \frac{|J_1| |J_2|}{|I_1|} \kappa_k^2/4 \leq \lambda.
	\]
\end{proof}

\begin{lemma}\label{lemma:one change then no change}
Let $\{Y_i\}_{i=1}^n$ satisfy Assumptions~\ref{assume:change} and \ref{assum-phase} and set $\lambda = C_{\lambda}\sigma^2\log(n)$, with $C_{\lambda} > 85$.  Assume that $I_1 = (s, e_1] \in \widehat{\mathcal{P}}$ contains exactly one change point namely $\eta_k$.  Denote $\delta = e_1 - \eta_{k}$ and $\epsilon = \eta_{k} - (s-1)$.  Assume that $\epsilon \le \delta$.    In the event $\mathcal{B}$, if $I_2 = (e_1, e_2] \in \widehat{\mathcal{P}}$, then it must hold that
	\[
		e_1 \leq \eta_{k+1} < \eta_{k+1} + 1 \leq e_2.
	\] 
\end{lemma}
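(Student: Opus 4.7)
The plan is to argue by contradiction: suppose $e_2 \le \eta_{k+1}$, so that $I_2 = (e_1, e_2] \subset (\eta_k, \eta_{k+1}]$ lies entirely within the constant region where $f$ takes the value $f_{\eta_k + 1}$. The key observation is that, once all breakpoints of $\widehat{\mathcal P}$ outside $(s, e_2]$ are held fixed, the breakpoint $e_1$ must be the interior split of $(s, e_2]$ that minimizes the sum of squares among single-split partitions; equivalently, $e_1$ maximizes the CUSUM-style quantity
\[
\mathrm{CUSUM}(c) := \frac{(c-s)(e_2-c)}{e_2-s}\bigl(\overline Y_{(s,c]} - \overline Y_{(c, e_2]}\bigr)^2
\]
over $c \in \{s+1, \ldots, e_2-1\}$.

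Comparing to the alternative $c = \eta_k$ (a valid candidate since $\epsilon \ge 1$ and $\eta_k \le e_1 - 1 \le e_2 - 2$), the optimality yields $\mathrm{CUSUM}(e_1) \ge \mathrm{CUSUM}(\eta_k)$. Since $\overline Y_{(s,\eta_k]}$ has mean $f_{\eta_k}$ while $\overline Y_{(\eta_k, e_2]}$ lies in the constant region with mean $f_{\eta_k+1}$, the signal magnitude of the mean difference at $c = \eta_k$ is the full jump $\kappa_k$; in contrast, $\overline Y_{I_1}$ has averaged mean $\overline f_{I_1}$ differing from $f_{\eta_k+1} = \mathbb{E}[\overline Y_{I_2}]$ only by $\epsilon \kappa_k/(\epsilon+\delta)$. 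Consequently the signal components of the two CUSUM values are
\[
\mathrm{CUSUM}(e_1)_{\mathrm{sig}} = \frac{|I_2|\epsilon^2\kappa_k^2}{(\epsilon+\delta)(\epsilon+\delta+|I_2|)}, \qquad \mathrm{CUSUM}(\eta_k)_{\mathrm{sig}} = \frac{\epsilon(\delta+|I_2|)\kappa_k^2}{\epsilon+\delta+|I_2|},
\]
whose ratio
\[
\frac{\mathrm{CUSUM}(\eta_k)_{\mathrm{sig}}}{\mathrm{CUSUM}(e_1)_{\mathrm{sig}}} = \frac{(\delta+|I_2|)(\epsilon+\delta)}{\epsilon |I_2|} = 1 + \frac{\delta}{\epsilon} + \frac{\delta}{|I_2|} + \frac{\delta^2}{\epsilon|I_2|}
\]
strictly exceeds $2$ under the hypothesis $\epsilon \le \delta$ together with $\epsilon, \delta \ge 1$. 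This is the source of the contradiction.

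To convert this deterministic gap into a rigorous contradiction, I would apply $(x+y)^2 \ge x^2/2 - y^2$ to lower-bound $\mathrm{CUSUM}(\eta_k)$ and $(x+y)^2 \le 2x^2 + 2y^2$ to upper-bound $\mathrm{CUSUM}(e_1)$ after decomposing each mean difference into deterministic signal plus centered noise, controlling the noise-squared portion by $\lambda$ via event $\mathcal B$. Combined with the upper bound $\epsilon\kappa_k^2 \le 8\lambda$ supplied by \Cref{lemma:one side of one change point} (applicable since $\epsilon \le \delta$), the chain collapses to a relation of the form $(\epsilon+\delta)(\epsilon+\delta+|I_2|) \le 2\epsilon|I_2|$; expanding and using $\epsilon \le \delta$ yields $(\epsilon+\delta)^2 \le (\epsilon - \delta)|I_2| \le 0$, which is impossible since $\epsilon, \delta \ge 1$ forces $(\epsilon+\delta)^2 \ge 4$.

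The principal technical obstacle is careful bookkeeping of constants through the signal--noise decomposition so that the deterministic factor-of-$2$ gap
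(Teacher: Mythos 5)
Your reformulation of $H(\widehat u)\le H(\widetilde u_2)$ as ``CUSUM$(\eta_k) \le $ CUSUM$(e_1)$'' is valid (two single-split refinements with the same penalty count reduce to a residual-sum-of-squares comparison), your signal calculations are correct, and the deterministic ratio $\mathrm{sig}(\eta_k)/\mathrm{sig}(e_1) = (\delta+|I_2|)(\epsilon+\delta)/(\epsilon|I_2|) > 2$ when $\epsilon\le\delta$ is the same algebraic heart that drives the paper's proof. But there is a genuine gap in how you try to convert this ratio into a contradiction.

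The single comparison CUSUM$(\eta_k)\le$CUSUM$(e_1)$, after the $(x+y)^2 \ge x^2/2 - y^2$ and $(x+y)^2\le 2x^2+2y^2$ decomposition and the $\mathcal B$-event bound of $\sigma^2\log n$ on each noise-squared term, only yields $\tfrac12\mathrm{sig}(\eta_k) - 2\,\mathrm{sig}(e_1) \le 3\sigma^2\log n$, i.e. $(R/2 - 2)\mathrm{sig}(e_1) \le 3\sigma^2\log n$ where $R>2$ is your ratio. This is \emph{not} a contradiction: $R$ can be arbitrarily close to $2$ (take $\epsilon=\delta$ and $|I_2|$ large, so that $\delta/|I_2|$ and $\delta^2/(\epsilon|I_2|)$ vanish), and the $\sigma^2\log n$ noise floor on the right can absorb the slack whenever $\mathrm{sig}(e_1)$ itself is only of order $\sigma^2\log n$. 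Your invocation of Lemma~\ref{lemma:one side of one change point}, which gives $\epsilon\kappa_k^2 \le 8\lambda = 8C_\lambda\sigma^2\log n$, is an \emph{upper} bound on the signal and so cuts in the wrong direction; it cannot rule out the small-signal regime. The claimed collapse to $(\epsilon+\delta)(\epsilon+\delta+|I_2|)\le 2\epsilon|I_2|$ therefore does not follow from the ingredients you have assembled.

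What is missing is the \emph{merge comparison} $H(\widehat u)\le H(\widetilde u_1)$ with $\widetilde{\mathcal P}_1 = \widehat{\mathcal P}\cup\{I_1\cup I_2\}\setminus\{I_1,I_2\}$. This gives $\lambda \le \mathrm{CUSUM}(e_1)$, hence a \emph{lower} bound $\mathrm{sig}(e_1) \gtrsim \lambda - \mathrm{const}\cdot\sigma^2\log n$ which forces the signal to be of the same order as $\lambda$ and makes the subsequent slack unwinnable. The paper then chains this with the split-relocation comparison (your CUSUM step) to rederive, \emph{inside the proof}, the much tighter bound $\epsilon\kappa_k^2 \le 64\,\sigma^2\log n$ — sharper than the $8\lambda$ that Lemma~\ref{lemma:one side of one change point} hands you — and only then obtains $\lambda \le 85\,\sigma^2\log n$, the contradiction. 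So the fix is to add the comparison against the merged partition and feed the resulting lower bound on $\mathrm{sig}(e_1)$ back into your ratio inequality; once you do, your argument becomes essentially the paper's.
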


\begin{proof}
Let $I_1' = (s,  \eta_{k}]$ and $I_2' = (\eta_{k}, e_2]$.  Then $I_1 \cup I_2 = I_1'\cup I_2'$.   Let $\widetilde{\mathcal{P}}_1$ and $\widetilde{\mathcal{P}}_2$ be 
	\[
		\widetilde{\mathcal{P}}_1 = \widehat{\mathcal{P}} \cup \{I_1 \cup I_2\} \setminus \{I_1, I_2\} 
	\]
	and 
	\[
		\widetilde{\mathcal{P}}_2 = \widehat{\mathcal{P}} \cup \{I_1', I_2'\} \setminus \{I_1, I_2\},
	\]
	respectively.  Let $\widetilde{u}_1$ and $\widetilde{u}_2$ be the piecewise-constant vectors induced by $\widetilde{\mathcal{P}}_1$ and $\widetilde{\mathcal{P}}_2$, respectively.

We proceed by contradiction.  We assume that $e_2 \leq \eta_{k+1}$.  Without loss of generality, assume $f_{\eta_{k+1}} = f_{\eta_k} + \kappa_k$.  Due to \Cref{assume:change}, it holds that $\mathbb{E}(\overline Y_{I_1'}) = f_{\eta_k}$, $\mathbb{E}(\overline{Y}_{I_2'}) = f_{\eta_{k+1}} = f_{\eta_k} + \kappa_k$, $\mathbb{E}(\overline{Y}_{I_1}) = f_{\eta_k} + \delta \kappa_k/|I_1|$ and $\mathbb{E}(\overline{Y}_{I_2}) = f_{\eta_{k+1}} = f_{\eta_k} + \kappa_k$.  Then, 
	\begin{align}
		0 & \leq H\bigl(\widetilde{u}_1, \{Y_i\}_{i=1}^n, \lambda\bigr) -H\bigl(\widehat{u}, \{Y_i\}_{i=1}^n, \lambda\bigr) \nonumber \\
		&  = -\lambda+ \frac{|I_1| |I_2|}{|I_1| +|I_2|}(\overline Y_{I_1} -\overline Y_{I_2})^2 \nonumber \\
		& = -\lambda + \frac{|I_1| |I_2|}{|I_1| + |I_2|} \bigl\{\overline Y_{I_1} - \mathbb{E}(\overline Y_{I_1}) - \overline Y_{I_2} + f_{\eta_{k+1}} + (\delta/|I_1| - 1)\kappa_k\bigr\}^2 \nonumber \\
		& \leq - \lambda + \frac{|I_1| |I_2|}{|I_1| +|I_2|}\bigl\{5(\overline Y_{I_1} - \mathbb{E}(\overline Y_{I_1}) - \overline Y_{I_2} + f_{\eta_{k+1}})^2+ \frac{5}{4}(\delta/|I_1| - 1)^2\kappa^2_k\bigr\} \nonumber \\
		& \leq -\lambda + 5\sigma^2\log(n) + \frac{5}{4}\frac{| I_1 | | I_2| }{|I_1| +|I_2|}\frac{\epsilon^2 \kappa_k^2}{|I_1|^2}\label{eq:case 4 combine intervals}
	\end{align}
	where the second inequity follows form the fact that $(x + y)^2 \leq 5 x^2 + (5/4) y^2$ and the last inequality follows from the definition of the event $\mathcal{B}$.  
	
In addition, we have
	\begin{align}
		& H\bigl(\widetilde{u}_2, \{Y_i\}_{i=1}^n, \lambda\bigr) -  H\bigl(\widetilde{u}_1, \{Y_i\}_{i=1}^n, \lambda\bigr) \nonumber\\
		& = \lambda - \frac{|I_1'| |I_2'|}{|I_1'| + |I_2'|}(\overline Y_{I_1'} -\overline Y_{I_2'})^2 \nonumber \\
		& = \lambda - \frac{|I_1'| |I_2'|}{|I_1'| + |I_2'|}\bigl\{\overline Y_{I_1'} - f_{\eta_k} - \overline Y_{I_2'} + f_{\eta_k} + \kappa_k  - \kappa_k\bigr\}^2 \nonumber \\
		& \leq \lambda - \frac{|I_1'| |I_2'|}{|I_1'| + |I_2'|} \left\{\frac{3}{4}\kappa^2_k - 3(\overline Y_{I_1'} - f_{\eta_k} - \overline Y_{I_2'} + f_{\eta_k} + \kappa_k)^2 \right\} \nonumber \\
		& \leq \lambda - \frac{3}{4}\frac{|I_1'| |I_2'|}{|I_1'| + |I_2'|}\kappa^2_k + 3\sigma^2\log(n), \nonumber &\label{eq:case 4 combine intervals-2}
	\end{align}
	where the first inequality follows from the fact that $(x-y)^2 \geq (3/4) y^2 - 4x^2$, and the last inequality follows from the definition of the event $\mathcal{B}$.
	
Then	
	\begin{align*}
		0 & \leq H\bigl(\widetilde{u}_2, \{Y_i\}_{i=1}^n, \lambda\bigr) -H\bigl(\widehat{u}, \{Y_i\}_{i=1}^n, \lambda\bigr) \le  8\sigma^2 \log(n)  +\frac{5}{4}\frac{| I_1 | | I_2| }{|I_1| +|I_2|}\frac{\epsilon^2 \kappa_k^2}{|I_1|^2}- \frac{3}{4}\frac{| I_1' | | I_2'| }{|I_1'| +|I_2'|}\kappa^2_k \\
		& = 8\sigma^2 \log(n) + \frac{\kappa_k^2 \epsilon}{4(|I_1| +|I_2|)} \left\{\frac{5|I_2| \epsilon}{\epsilon + \delta} -3(\delta + |I_2|) \right\} \\
		& \leq 8\sigma^2 \log(n) - \frac{\kappa_k^2 \epsilon}{4(|I_1| +|I_2|)} (3\delta + |I_2|/2),
	\end{align*}
	therefore
	\[
		\kappa_k^2\epsilon \leq 64 \sigma^2 \log(n). 
	\]
Combined  with \eqref{eq:case 4 combine intervals}, this implies that 
	\begin{align*}
		\lambda \leq 5 \sigma^2 \log(n) + 64 \sigma^2\log(n) \frac{5|I_2| \epsilon}{4(|I_1|+|I_2|)|I_1|} \leq 85 \sigma^2 \log(n),
	\end{align*}
	which contradicts with the assumption that $\lambda > 85\sigma^2\log(n)$. 
\end{proof}

\begin{lemma}\label{lemma-eps-delta}	
Let $\{Y_i\}_{i=1}^n$ satisfy Assumptions~\ref{assume:change} and \ref{assum-phase} and set $\lambda = C_{\lambda}\sigma^2\log(n)$ with a sufficiently large $C_{\lambda} > 0$.  Assume that there exists an interval partition $\mathcal{P}$ with induced piecewise constant vector $u$ such that $I_1 = (s, e_1] \in \mathcal{P}$ and $I_2 = (e_1, e_2] \in \mathcal{P}$, where $I_1$ and $I_2$ satisfy \eqref{eq-one-change-se1} and \eqref{eq-one-change-e1e2}.  Let $\epsilon = \eta_k - s + 1$, $\delta = e_1 - \eta_k +1$, $\epsilon_1 = \eta_{k+1} - e + 1$ and $\delta_1 = e_2 - \eta_{k+1} +1$.  Assume $\epsilon < \delta$ and $\epsilon_1 > \delta_1$.  Then in the event $\mathcal{B}$, $u$ is not a minimizer of \eqref{eq-H}.
\end{lemma}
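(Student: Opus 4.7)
The plan is to proceed by contradiction. Assume $u$ (hence the associated partition $\mathcal P$) minimizes \eqref{eq-H}; I will exhibit an alternative partition with strictly smaller objective. I consider two candidate modifications of $\mathcal P$ that preserve the number of intervals, so that the $\ell_0$-penalty is unchanged, obtained by shifting the internal boundary from $e_1$ to one of the true change points:
\[
\mathcal P_L = (\mathcal P \setminus \{I_1, I_2\}) \cup \{(s, \eta_k],\, (\eta_k, e_2]\}, \qquad \mathcal P_R = (\mathcal P \setminus \{I_1, I_2\}) \cup \{(s, \eta_{k+1}],\, (\eta_{k+1}, e_2]\}.
\]
Writing $J = (s, e_2]$, \Cref{lemma-pre} yields $\sum_{i \in (s,t]}(Y_i - \overline Y_{(s,t]})^2 + \sum_{i \in (t, e_2]}(Y_i - \overline Y_{(t, e_2]})^2 = \sum_{i \in J}(Y_i - \overline Y_J)^2 - (\widetilde Y^{s,e_2}_t)^2$ for any split point $t$, so
\[
G(\mathcal P) - G(\mathcal P_L) = (\widetilde Y^{s,e_2}_{\eta_k})^2 - (\widetilde Y^{s,e_2}_{e_1})^2, \qquad G(\mathcal P) - G(\mathcal P_R) = (\widetilde Y^{s,e_2}_{\eta_{k+1}})^2 - (\widetilde Y^{s,e_2}_{e_1})^2,
\]
and the task reduces to showing that at least one of these CUSUM-difference quantities is strictly positive.

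Decompose $\widetilde Y^{s,e_2}_t = \widetilde f^{s,e_2}_t + \widetilde W^{s,e_2}_t$ with $W = Y - f$; on $\mathcal{B}$, $|\widetilde W^{s,e_2}_t| \leq \sigma \sqrt{C_\lambda \log n}$ uniformly in $t$. For the signal, since $f$ is piecewise constant on $J$ with internal change points $\eta_k, \eta_{k+1}$, direct algebra produces closed-form expressions for $(\widetilde f^{s,e_2}_t)^2$ at $t \in \{e_1, \eta_k, \eta_{k+1}\}$; differentiating twice in $t$ over the flat region $(\eta_k, \eta_{k+1})$ shows that $(\widetilde f^{s,e_2}_t)^2$ is strictly convex there, so its maximum over $[\eta_k, \eta_{k+1}]$ is attained at an endpoint and not at the interior point $e_1$. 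An elementary manipulation exploiting both $\epsilon < \delta$ and $\delta_1 < \epsilon_1$ then yields the quantitative signal lower bound
\[
[(\widetilde f^{s,e_2}_{\eta_k})^2 - (\widetilde f^{s,e_2}_{e_1})^2] + [(\widetilde f^{s,e_2}_{\eta_{k+1}})^2 - (\widetilde f^{s,e_2}_{e_1})^2] \geq \frac{\epsilon \delta}{\epsilon + \delta + \epsilon_1} \kappa_k^2 + \frac{\epsilon_1 \delta_1}{\delta + \epsilon_1 + \delta_1} \kappa_{k+1}^2,
\]
so at least one of the two individual signal gaps is bounded below by half the right-hand side.

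The main obstacle is propagating this population gap through the noise. Expanding $(\widetilde Y)^2 = (\widetilde f + \widetilde W)^2$ introduces a cross-term of the form $2(\widetilde f_t \widetilde W_t - \widetilde f_{e_1} \widetilde W_{e_1})$ along with a pure noise difference; the latter is $O(\lambda)$ on $\mathcal{B}$, while the former is controlled by bounding $|\widetilde f^{s,e_2}_t|$ at the three candidate locations. For this I invoke \Cref{lemma:one side of one change point} applied to $I_1$ and $I_2$ individually---permissible because $u$ is the assumed minimizer and the hypotheses $\epsilon < \delta$ and $\delta_1 < \epsilon_1$ identify $\min(\epsilon,\delta) = \epsilon$ and $\min(\epsilon_1,\delta_1) = \delta_1$---obtaining $\epsilon \kappa_k^2,\, \delta_1 \kappa_{k+1}^2 \leq 8\lambda$. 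Feeding these bounds into the closed-form formulas gives $|\widetilde f^{s,e_2}_t| \lesssim \sqrt{\lambda}$ for each $t \in \{e_1, \eta_k, \eta_{k+1}\}$, so the noise correction is at most $O(\lambda)$. With $\lambda = C_\lambda \sigma^2 \log n$ for $C_\lambda$ sufficiently large and \Cref{assum-phase} in force, the signal advantage strictly dominates the noise correction, producing the desired contradiction.
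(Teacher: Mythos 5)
Your approach has a genuine gap, and the missing idea is precisely the one that makes the paper's proof work: the comparison with the \emph{merged} partition $\widehat{\mathcal P}\cup\{I_1\cup I_2\}\setminus\{I_1,I_2\}$, which has one fewer interval and so explicitly brings $\lambda$ into the inequality.

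By restricting to $\mathcal P_L$ and $\mathcal P_R$, both of which have the same number of intervals as $\mathcal P$, you cancel the $\ell_0$-penalty entirely and reduce the problem to showing that the \emph{data} CUSUM $(\widetilde Y^{s,e_2}_t)^2$ is strictly larger at one of the true change points than at $e_1$. But the hypotheses of the lemma give you no lower bound on the signal gap: invoking \Cref{lemma:one side of one change point} yields only the \emph{upper} bounds $\epsilon\kappa_k^2\le 8\lambda$ and $\delta_1\kappa_{k+1}^2\le 8\lambda$, and the right-hand side of your claimed population inequality, $\frac{\epsilon\delta}{\epsilon+\delta+\epsilon_1}\kappa_k^2 + \frac{\epsilon_1\delta_1}{\delta+\epsilon_1+\delta_1}\kappa_{k+1}^2$, can be as small as order $\kappa_k^2+\kappa_{k+1}^2$ when $\epsilon=\delta_1=1$. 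Under \Cref{assum-phase} with large $\Delta$ (the assumption only controls $\kappa\sqrt\Delta/\sigma$), one can have $\kappa_k^2,\kappa_{k+1}^2 \ll \sigma^2\log(n)$. In that regime the pure-noise difference $\widetilde W_{\eta_k}^2 - \widetilde W_{e_1}^2$, which on $\mathcal B$ is only bounded in absolute value by something of order $\sigma^2\log(n)$ and can take either sign, dominates the population gap, so neither $(\widetilde Y^{s,e_2}_{\eta_k})^2 > (\widetilde Y^{s,e_2}_{e_1})^2$ nor $(\widetilde Y^{s,e_2}_{\eta_{k+1}})^2 > (\widetilde Y^{s,e_2}_{e_1})^2$ is guaranteed. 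Making $C_\lambda$ large does not rescue this: the signal advantage scales like $\kappa^2$, not like $\lambda$, so ``$\lambda$ sufficiently large'' buys you nothing once the penalty has been cancelled out of the comparison.

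The paper's proof handles exactly this degenerate situation. It first compares $\mathcal P$ with the merged $\widetilde{\mathcal P}_1$, producing an inequality of the form $\lambda \le O(\sigma^2\log n) + O\bigl((\mathbb E\overline Y_{I_1}-\mathbb E\overline Y_{I_2})^2\cdot\tfrac{|I_1||I_2|}{|I_1|+|I_2|}\bigr)$, and then compares $\widetilde{\mathcal P}_1$ with the split $\widetilde{\mathcal P}_2$ (or $\widetilde{\mathcal P}_3$) to bound the second term by $O(\sigma^2\log n)$. The combination gives $\lambda\le C\sigma^2\log n$, which is the contradiction. In the regime where the two interval means $\mathbb E\overline Y_{I_1}$ and $\mathbb E\overline Y_{I_2}$ are nearly equal (e.g.\ $\epsilon=\delta_1$, $\delta=\epsilon_1$, $\omega_1=\omega_2$), the first comparison alone already gives the contradiction, and the same-size comparison you use gives essentially nothing. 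To repair your argument you would need to add the merge comparison as a separate step, at which point you are reproducing the paper's structure rather than simplifying it.
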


\begin{proof}
For notational simplicity, let
	\[
		f_{\eta_k} = \mu + \omega_1, \quad f_{\eta_{k+1}} = \mu \quad \text{and}  \quad f_{\eta_{k+2}} = \mu + \omega_2.
	\]
	Let $I_1' = (s,  \eta_{k}]$, $I_2' = (\eta_{k}, e_2]$, $I_1'' = (s, \eta_{k+1}]$ and $I_2'' = (\eta_{k+1}, e_2]$.  Then $I_1 \cup I_2 = I_1' \cup I_2' = I_1'' \cup I_2''$.  Let $\widetilde{\mathcal{P}}_1$, $\widetilde{\mathcal{P}}_2$ and $\widetilde{\mathcal{P}}_3$ be such that
	\begin{align*}
		\widetilde{\mathcal{P}}_1 = \mathcal{P} \cup \{I_1 \cup I_2\} \setminus \{I_1, \,I_2\},\, \widetilde{\mathcal{P}}_2 = \mathcal{P} \cup \{I_1', \, I_2'\} \setminus \{I_1, \, I_2\}, \mbox{ and }\widetilde{\mathcal{P}}_3 =  \mathcal{P} \cup \{I_1'',\, I_2''\} \setminus \{I_1,\, I_2\}.
	\end{align*}
	Let $\widetilde{u}_1$, $\widetilde{u}_2$ and $\widetilde{u}_3$ be the piecewise constant vectors induced by $\widetilde{\mathcal{P}}_1$, $\widetilde{\mathcal{P}}_2$ and $\widetilde{\mathcal{P}}_3$.  The population means are
	\[
		\mathbb{E}(\overline{Y}_{I_1}) = \mu + \frac{\epsilon \omega_1}{\epsilon + \delta} \mbox{ and } \mathbb{E}(\overline{Y}_{I_2}) = \mu + \frac{\delta_1 \omega_2}{\epsilon_1 + \delta_1}.
	\]

Let $0<\alpha<1$ be a fixed constant to be specified later.  We have the following:
	\begin{align*}
		& H\bigl(\widetilde{u}_1, \{Y_i\}_{i=1}^n, \lambda\bigr) - H\bigl(u, \{Y_i\}_{i=1}^n, \lambda\bigr) = -\lambda + \frac{|I_1| |I_2|}{|I_1| + |I_2|} (\overline Y_{I_1} - \overline Y_{I_2})^2 \\
		= & -\lambda+ \frac{| I_1 | | I_2| }{|I_1| +|I_2|} \bigl\{\overline Y_{I_1} - \mathbb{E}(\overline Y_{I_1}) - \overline Y_{I_2} + \mathbb{E}(\overline Y_{I_2}) + \mathbb{E}(\overline Y_{I_1}) - \mathbb{E}(\overline Y_{I_2})\bigr\}^2 \\
		\leq & -\lambda + 2(1+\alpha)\alpha^{-1}\sigma^2 \log(n) + (1+\alpha)\frac{|I_1| |I_2|}{|I_1| +|I_2|} \left(\frac{\epsilon \omega_1}{\epsilon + \delta} - \frac{\delta_1\omega_2}{\epsilon_1 + \delta_1}\right)^2, \\
		& H\bigl(\widetilde{u}_2, \{Y_i\}_{i=1}^n, \lambda\bigr) - H\bigl(\widetilde{u}_1, \{Y_i\}_{i=1}^n, \lambda\bigr) = \lambda - \frac{|I_1'| |I_2'|}{|I_1'| + |I_2'|}(\overline Y_{I_1'} - \overline Y_{I_2'})^2 \\
		= & \lambda - \frac{|I_1'| |I_2'|}{|I_1'| + |I_2'|}\bigl\{\overline Y_{I_1'} - \mathbb{E}(\overline Y_{I_1'}) -\overline Y_{I_2'} + \mathbb{E}(\overline Y_{I_2'}) + \mathbb{E}(\overline Y_{I_1'}) - \mathbb{E}(\overline Y_{I_2'}) \bigr\}^2 \\
		\leq & \lambda - (1-\alpha)\frac{|I_1'| |I_2'|}{|I_1'| + |I_2'|} \left(\omega_1 - \frac{\omega_2 \delta_1}{\delta + \epsilon_1 + \delta_1}\right)^2 + \frac{2(1-\alpha)}{\alpha}\sigma^2\log(n),
	\end{align*}
	and
	\begin{align*}
		& H\bigl(\widetilde{u}_3, \{Y_i\}_{i=1}^n, \lambda\bigr) - H\bigl(\widetilde{u}_1, \{Y_i\}_{i=1}^n, \lambda\bigr)  = \lambda - \frac{|I_1''| |I_2''|}{|I_1''| + |I_2''|}(\overline Y_{I_1''} - \overline Y_{I_2''})^2 \\
		= & \lambda - \frac{|I_1''| |I_2''|}{|I_1''| + |I_2''|}\bigl\{\overline Y_{I_1''} - \mathbb{E}(\overline Y_{I_1''}) -\overline Y_{I_2''} + \mathbb{E}(\overline Y_{I_2''}) + \mathbb{E}(\overline Y_{I_1''}) - \mathbb{E}(\overline Y_{I_2''}) \bigr\}^2 \\
		\leq & \lambda - (1-\alpha)\frac{|I_1''| |I_2''|}{|I_1''| + |I_2''|} \left(\omega_2 - \frac{\omega_1 \epsilon}{\epsilon + \delta + \epsilon_1}\right)^2 + \frac{2(1-\alpha)}{\alpha}\sigma^2\log(n).
	\end{align*}

For the rest of the proof, we proceed by contradiction by assuming that $u$ is the minimizer of \eqref{eq-H}.  We will consider the cases $\omega_1\omega_2 > 0$ and $\omega_1\omega_2 < 0$ separately in {\bf Steps 1} and {\bf 2}, respectively.

\vskip 3mm
\noindent {\bf Step 1.} Suppose $\omega_1\omega_2 > 0$.  Without loss of generality, assume $\omega_1, \omega_2 > 0$ and for some $0 < \beta \leq 1$, it holds that
	\[
		\frac{\delta_1\omega_2}{\epsilon_1 + \delta_1} = \beta \frac{\epsilon \omega_1}{\epsilon + \delta}.
	\]

We have
    \begin{align}\label{eq:one change point  case 3 step 1 eq 1}
    	0 & \leq  H\bigl(\widetilde{u}_1, \{Y_i\}_{i=1}^n, \lambda\bigr) - H\bigl(u, \{Y_i\}_{i=1}^n, \lambda\bigr) \nonumber\\
 	   & \leq -\lambda + 2(1+\alpha)\alpha^{-1}\sigma^2 \log(n) + (1+\alpha)(1 - \beta)^2\frac{|I_1| |I_2|}{|I_1| +|I_2|} \left(\frac{\epsilon \omega_1}{\epsilon + \delta}\right)^2,
	\end{align}
	and
	\begin{align}\label{eq:one change point  case 3 step 1 eq 2}
    	& H\bigl(\widetilde{u}_2, \{Y_i\}_{i=1}^n, \lambda\bigr) - H\bigl(\widetilde{u}_1, \{Y_i\}_{i=1}^n, \lambda\bigr)    \nonumber \\
		\leq & \lambda - (1-\alpha)\frac{|I_1'| |I_2'|}{|I_1'| + |I_2'|} \left(\omega_1 - \frac{\omega_2 \delta_1}{\delta + \epsilon_1 + \delta_1}\right)^2 + \frac{2(1-\alpha)}{\alpha}\sigma^2\log(n) \nonumber \\
		\leq & \lambda + \frac{2(1-\alpha)}{\alpha}\sigma^2\log(n)  - (1-\alpha)(1 - \beta)^2 \frac{|I_1'| |I_2'|}{|I_1'| + |I_2'|} \omega_1^2,
	\end{align}
	where the last inequality of \eqref{eq:one change point  case 3 step 1 eq 2} follow from the observation that
	\begin{align*}
		 \frac{\omega_2 \delta_1}{\delta + \epsilon_1 + \delta_1} = \frac{\omega_2 \delta_1}{\epsilon_1 + \delta_1}\frac{\epsilon_1 + \delta_1}{\delta + \epsilon_1 + \delta_1} = \beta \omega_1 	\frac{\epsilon}{\epsilon + \delta}\frac{\epsilon_1 + \delta_1}{\delta + \epsilon_1 + \delta_1} \leq \beta\omega_1/2.
	\end{align*}

Equations~\eqref{eq:one change point  case 3 step 1 eq 1} and \eqref{eq:one change point  case 3 step 1 eq 2} lead to that
    \begin{align}
		0 & \leq H\bigl(\widetilde{u}_2, \{Y_i\}_{i=1}^n, \lambda\bigr) - H\bigl(u, \{Y_i\}_{i=1}^n, \lambda\bigr) \nonumber \\
		\leq & \frac{4}{\alpha}\sigma^2\log(n) + \frac{\omega_1^2 \epsilon (1 - \beta)^2\bigl\{(1+\alpha)\epsilon(\epsilon_1 + \delta_1) - (1 - \alpha)(\epsilon + \delta)(\delta + \epsilon_1 + \delta_1)\bigr\}}{(\epsilon + \delta + \epsilon_1 + \delta_1)(\epsilon + \delta)} \nonumber \\
		\leq & \frac{4}{\alpha}\sigma^2\log(n) - \frac{\omega_1^2(1 - \beta)^2 \epsilon}{4}, \label{eq-compare-u2-u}.
    \end{align}
  Plugging in \eqref{eq-compare-u2-u} into \eqref{eq:one change point  case 3 step 1 eq 1} with a choice of $\alpha = 1/4$ yields that
	\[
		\lambda \leq 50\sigma^2 \log(n),
	\]
	which is a contradiction.

\vskip 3mm
\noindent {\bf Step 2.}  Suppose $\omega_1\omega_2 < 0$.  Without loss of generality assume that with $\gamma \geq 1$ it holds that
	\[
		\left|\frac{\epsilon \omega_1}{\epsilon + \delta} \right| = \gamma \left|\frac{\delta_1\omega_2}{\epsilon_1 + \delta_1}\right|.
	\]

Since $\delta + \epsilon_1 = \eta_{k+1} - \eta_k + 1 > \Delta$, we have $\max\{\delta, \, \epsilon_1\} > \Delta/2$.

\vskip 1.5mm
\noindent {\bf case 1.} Suppose $\epsilon_1 > \Delta/2$.  It follows from \Cref{lemma:one side of one change point} that $\delta_1 < 8\lambda/\kappa_{k+1}^2$.  Then,
	\begin{align}\label{eq:one change point step 2 case 1}
		0 & \leq H\bigl(\widetilde{u}_1, \{Y_i\}_{i=1}^n, \lambda\bigr) - H\bigl(u, \{Y_i\}_{i=1}^n, \lambda\bigr) \nonumber \\
		& \leq  -\lambda + 2(1+\alpha)\alpha^{-1}\sigma^2 \log(n) + (1+\alpha)\frac{|I_1| |I_2|}{|I_1| +|I_2|} \left(\frac{\epsilon \omega_1}{\epsilon + \delta} - \frac{\delta_1\omega_2}{\epsilon_1 + \delta_1}\right)^2 \nonumber \\
		& \leq -\lambda +  2(1+\alpha)\alpha^{-1}\sigma^2 \log(n) + (\gamma + 1)^2(1+\alpha)\frac{|I_1| |I_2|}{|I_1| +|I_2|} \left(\frac{\delta_1\omega_2}{\epsilon_1 + \delta_1}\right)^2
	\end{align}
	and 
	\begin{align}\label{eq:one change point step 2 case 1-2}
		& H\bigl(\widetilde{u}_3, \{Y_i\}_{i=1}^n, \lambda\bigr) - H\bigl(\widetilde{u}_1, \{Y_i\}_{i=1}^n, \lambda\bigr) \nonumber \\
		& \leq \lambda - (1-\alpha)\frac{|I_1''| |I_2''|}{|I_1''| + |I_2''|} \left(\omega_2 - \frac{\omega_1 \epsilon}{\epsilon + \delta + \epsilon_1}\right)^2 + \frac{2(1-\alpha)}{\alpha}\sigma^2\log(n) \nonumber \\
		& \leq  \lambda - (1-\alpha)\frac{|I_1''| |I_2''|}{|I_1''| + |I_2''|} \left(1 - \gamma\frac{\delta_1}{\epsilon_1 + \delta_1}\frac{\epsilon + \delta}{\epsilon + \delta + \epsilon_1}\right)^2\omega_2^2+ \frac{2(1-\alpha)}{\alpha}\sigma^2\log(n).
	\end{align}

Equations~\eqref{eq:one change point step 2 case 1} and 	\eqref{eq:one change point step 2 case 1-2} lead to	
	\begin{align}\label{eq:one change point step 2 case 1-3}
		0 & \leq H\bigl(\widetilde{u}_3, \{Y_i\}_{i=1}^n, \lambda\bigr) - H\bigl(u, \{Y_i\}_{i=1}^n, \lambda\bigr) \nonumber \\
		& \leq \frac{4\sigma^2 \log(n)}{\alpha} + (\gamma + 1)^2(1+\alpha)\frac{|I_1| |I_2|}{|I_1| +|I_2|} \left(\frac{\delta_1\omega_2}{\epsilon_1 + \delta_1}\right)^2  \nonumber \\
		& \hspace{3cm} - (1-\alpha)\frac{|I_1''| |I_2''|}{|I_1''| + |I_2''|} \left(1 - \gamma\frac{\delta_1}{\epsilon_1 + \delta_1}\frac{\epsilon + \delta}{\epsilon + \delta + \epsilon_1}\right)^2\omega_2^2.
	\end{align}

Then there exists a sufficiently small $c > 0$ such that \eqref{eq:one change point step 2 case 1-3} yields
	\begin{align*} 
		c\omega_2^2 \delta_1 < \sigma^2\log(n),
	\end{align*}
	which can be plugged into \eqref{eq:one change point step 2 case 1} and shows that for a sufficiently large $C_1 > 0$, 
	\[
		\lambda \leq C_1\sigma^2\log(n).
	\]
	This contradicts the assumed condition on $\lambda$.

\vskip 1.5mm
\noindent {\bf case 2.} Suppose $\epsilon_1 \leq \Delta/2$.  It follows from \Cref{lemma:one side of one change point} that $\epsilon_1 < 8\lambda/\kappa_{k+1}^2$.  Then,
	\begin{align}
		& H\bigl(\widetilde{u}_2, \{Y_i\}_{i=1}^n, \lambda\bigr) - H\bigl(\widetilde{u}_1, \{Y_i\}_{i=1}^n, \lambda\bigr) \nonumber \\
		\leq & \lambda - (1-\alpha)\frac{|I_1'| |I_2'|}{|I_1'| + |I_2'|} \left(\omega_1 - \frac{\omega_2 \delta_1}{\delta + \epsilon_1 + \delta_1}\right)^2 + \frac{2(1-\alpha)}{\alpha}\sigma^2\log(n) \nonumber \\
		\leq & \lambda - (1- \alpha)\frac{|I_1'| |I_2'|}{|I_1'| + |I_2'|}\left(1 - \frac{1}{\gamma}\frac{\epsilon}{\epsilon + \delta} \frac{\epsilon_1 + \delta_1}{\delta + \epsilon_1 + \delta_1}\right)^2\omega_1^2 + \frac{2(1-\alpha)}{\alpha}\sigma^2\log(n). \label{eq-one-change-case2-1}
	\end{align}
	Equations~\eqref{eq:one change point step 2 case 1} and \eqref{eq-one-change-case2-1} lead to that there exists a sufficiently small $c > 0$ such that \eqref{eq:one change point step 2 case 1-3} yields
	\begin{align*} 
		c\omega_2^2 \epsilon_1 < \sigma^2\log(n),
	\end{align*}
	which can be plugged into \eqref{eq:one change point step 2 case 1} and shows that for a sufficiently large $C_1 > 0$, 
	\[
		\lambda \leq C_1\sigma^2\log(n).
	\]
	This again contradicts the assumed condition on $\lambda$.
\end{proof}

\subsection{Step 4: no changes}
\label{section:no change}

Suppose $I = (s_1, e] \in \widehat{\mathcal{P}}$ contains no true change point.  By symmetry, it suffices to show that there exists a large enough constant $C > 0$ such that
	\begin{equation}\label{eq-no-change-1}
		s_1 - \eta_k + 1 \leq C\lambda/\kappa_k^2.
	\end{equation}
	Assume $I_0 = (s_0, s_1] \in \widehat{\mathcal{P}}$.  We are to show the following.
	\begin{itemize}
		\item [(i)]	It is impossible that there is no true change point in $I_0 \cup I$.  This will be shown in \Cref{lemma:no change point consequtive}.
		\item [(ii)] If there exist exactly two true change points in $I_0$, then \eqref{eq-no-change-1} follows from \Cref{lemma:two change localization}.
		\item [(iii)] If there exists one and only one change point $\eta_k \in I_0$ and $s_1 - \eta_k < \eta_k - s_0$, then \eqref{eq-no-change-1} follows from \Cref{lemma:one side of one change point}.
		\item [(iv)] If there exists one and only one change point $\eta_k \in I_0$ and $s_1 - \eta_k \geq \eta_k - s_0$, it follows from \Cref{lemma:one change then no change} that this is impossible in the event of $\mathcal{B}$.
	\end{itemize}

\begin{lemma}\label{lemma:no change point consequtive}
Assume the inputs $\{Y_i\}_{i=1}^n$ satisfying Assumptions~\ref{assume:change} and \ref{assum-phase} and $\lambda = C_{\lambda}\sigma^2\log(n)$ with a sufficiently large $C_{\lambda} > 0$.  Assume that $I = (s_1, e] \in \widehat{\mathcal{P}}$ contains no change point.  Assume that $I_0 = (s_0, s_1] \in \widehat{\mathcal{P}}$.  Then in the event $\mathcal{B}$, there must exist a change point in $I_0$.
\end{lemma}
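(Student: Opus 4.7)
The plan is to argue by contradiction. I would suppose that $I_0 \cup I = (s_0, e]$ contains no true change point, so that $f_i$ is constant throughout this union and in particular $\overline{f}_{I_0} = \overline{f}_I$, and then show that this contradicts the optimality of $\widehat{u}$ once $\lambda = C_\lambda \sigma^2 \log(n)$ with $C_\lambda$ sufficiently large.

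The key construction is to coarsen $\widehat{\mathcal{P}}$ by merging the two adjacent intervals: set $\widetilde{\mathcal{P}} = (\widehat{\mathcal{P}} \setminus \{I_0, I\}) \cup \{I_0 \cup I\}$ and let $\widetilde{u}$ be its induced piecewise-constant vector. Applying \Cref{lemma-pre} to the pair $(I_0, I)$, and observing that the merge removes exactly one induced break so that the penalty drops by $\lambda$, I obtain the identity
$$H\bigl(\widetilde{u}, \{Y_i\}_{i=1}^n, \lambda\bigr) - H\bigl(\widehat{u}, \{Y_i\}_{i=1}^n, \lambda\bigr) = \frac{|I_0|\, |I|}{|I_0| + |I|}\bigl(\overline{Y}_{I_0} - \overline{Y}_I\bigr)^2 - \lambda.$$
Since $\widehat{u}$ minimizes the objective, the right-hand side must be non-negative, yielding $\frac{|I_0||I|}{|I_0|+|I|}(\overline{Y}_{I_0} - \overline{Y}_I)^2 \geq \lambda$. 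On the other hand, since $\overline{f}_{I_0} = \overline{f}_I$ under the contradiction hypothesis, writing $\overline{Y}_{I_0} - \overline{Y}_I = (\overline{Y}_{I_0} - \overline{f}_{I_0}) - (\overline{Y}_I - \overline{f}_I)$ and invoking the event $\mathcal{B}$ with $(a, b, c) = (s_0, s_1, e)$ yields the opposing bound $\frac{|I_0||I|}{|I_0|+|I|}(\overline{Y}_{I_0} - \overline{Y}_I)^2 \leq \sigma^2\log(n)$. Combining, I would deduce $\lambda \leq \sigma^2 \log(n)$, contradicting $\lambda = C_\lambda \sigma^2 \log(n)$ as soon as $C_\lambda > 1$.

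The one point that deserves care is the convention introduced at the start of \Cref{sec:app-1}: a change point $\eta_k$ is declared to lie in an interval $J$ only when $\{\eta_k, \eta_k + 1\} \subset J$. Under this convention a change point situated exactly at the boundary $s_1$ belongs neither to $I_0$ nor to $I$ but does belong to $I_0 \cup I$. This is why the contradiction is really being drawn for the statement \emph{no true change point in $I_0 \cup I$}, matching item (i) in the outline of \Cref{section:no change}. Beyond this bookkeeping point I do not foresee a serious obstacle: the argument is essentially a one-line application of the splitting identity in \Cref{lemma-pre} together with the concentration bound encoded in $\mathcal{B}$, with the only minor subtlety being to align the sign convention of $\mathcal{B}$ with the noise decomposition $(\overline{Y}_{I_0} - \overline{f}_{I_0}) - (\overline{Y}_I - \overline{f}_I)$ that naturally arises here.
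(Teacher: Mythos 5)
Your proposal matches the paper's proof essentially line for line: merge $I_0$ and $I$ into $J$, apply \Cref{lemma-pre} to express the change in the objective $H$ as $-\lambda + \frac{|I_0||I|}{|I_0|+|I|}(\overline Y_{I_0}-\overline Y_I)^2$, use optimality of $\widehat u$ to get the left inequality $\geq 0$, use that $f$ is constant on $J$ so $\overline Y_{I_0}-\overline Y_I$ is a pure noise CUSUM-type quantity, bound it via $\mathcal B$ (with $C_{\mathcal B}$ normalized to $1$), and conclude $\lambda\le\sigma^2\log n$, a contradiction. Your remark about the boundary convention -- that the contradiction hypothesis is really ``no change point in $I_0\cup I$'' rather than ``no change point in $I_0$'', since $\eta_k=s_1$ belongs to neither $I_0$ nor $I$ under the paper's convention but does belong to $J$ -- is in fact a point the paper glosses over (its proof simply says ``assume $J$ contains no change points''), so your bookkeeping is an improvement rather than a deviation.
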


\begin{proof}
Let $J = I_0 \cup I$, $\widetilde{\mathcal{P}}$ be the interval partition such that 
	\[
		\widetilde{\mathcal{P}} = \widehat{\mathcal{P}} \cup \{J\} \setminus \{I_0, \, I\},
	\]
	and $\widetilde{u}$ be the piecewise-constant vector induced by $\widetilde{\mathcal{P}}$.  

Prove by contradiction, assuming that $J$ contains no change points.  Denote $\mu = \mathbb{E}(\overline Y_{I_0}) = \mathbb{E}(\overline Y_{I})$.  Then
	\begin{align*}
		0 & \leq H\bigl(\widetilde{u}, \{Y_i\}_{i=1}^n, \lambda\bigr) - H\bigl(\widehat{u}, \{Y_i\}_{i=1}^n, \lambda\bigr) = -\lambda + \frac{|I_0| |I|}{|I_0| + |I|} (\overline Y_{I_0} -\overline Y_{I})^2 \\
		& \leq -\lambda + \frac{|I_0| |I|}{|I_0| + |I|} (\overline Y_{I_0} - \mu - \overline Y_{I} + \mu)^2 \\
		& \leq -\lambda + \sigma^2 \log (n),
	\end{align*}
	where the last inequality follows from the definition of the event $\mathcal{B}$, and results in a contradiction with the condition on $\lambda$.
\end{proof}

\section{Proofs of the Results in  \Cref{section:bs}}
\label{sec:app-3}

\subsection{Large probability events}

Define the events
	\begin{align}
		& \mathcal A_1 (\gamma) = \left\{ \sup_{0\leq s < t < e \leq n}\bigl|\widetilde{Y}^{s, e}_t - \widetilde{f}^{s, e}_t \bigr|\leq \gamma\right\}, \label{eq-event-A1} \\
		& \mathcal{A}_2(\gamma) =  \left\{ \sup_{0\le s < e \le n} \frac{\left|\sum_{i=s+1}^e (Y_i - f_i)\right|}{\sqrt{e-s}} \le \gamma\right\}, \label{eq-event-A2}
	\end{align}
	and
	\begin{equation}\label{eq-event-M}
	\mathcal{M} = \bigcap_{k = 1}^K \bigl\{s_m \in \mathcal{S}_k, e_m \in \mathcal{E}_k, \, \mbox{for some }m \in \{1, \ldots, M\}\bigr\}, 
	\end{equation}
	where $\{s_m\}_{m=1}^M$ and $\{e_m\}_{m=1}^M$ are two sequences independently selected at random in $(s, e)$, $\mathcal S_{k}= [\eta_k-3\Delta/4, \eta_k-\Delta/2 ]$ and $\mathcal E_{k}= [\eta_k+\Delta/2, \eta_k+3\Delta/4 ]$, $k = 1, \ldots, K$.

\begin{lemma}\label{lem:A1A2M}
For $\{Y_i\}_{i=1}^n$ satisfying \Cref{assume:change}, it holds that
	\begin{align*}
		& \mathbb{P}\bigl\{\mathcal{A}_1(\gamma)\bigr\} \geq 1 - e\cdot n^3 \exp\left(-c\gamma^2/\sigma^2\right), \\
		& \mathbb{P}\bigl\{\mathcal{A}_2(\gamma)\bigr\} \geq 1 - e\cdot n^2 \exp\left(-c\gamma^2/\sigma^2\right)
	\end{align*}
	and
	\[
		\mathbb{P}\bigl\{\mathcal{M}\bigr\} \geq 1 - \exp\left\{\log\left(\frac{n}{\Delta}\right) - \frac{M\Delta^2}{16n^2}\right\}.
	\]
\end{lemma}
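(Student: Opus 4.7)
The plan is to handle the three events separately, since they are essentially independent statements, and in each case reduce to a standard concentration plus union bound argument.

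For $\mathcal{A}_1(\gamma)$, I would first check that for fixed $0 \leq s < t < e \leq n$ the centered CUSUM statistic $\widetilde{Y}^{s,e}_t - \widetilde{f}^{s,e}_t$ is a linear combination of the independent centered sub-Gaussians $Y_i - f_i$, with coefficients $\sqrt{(e-t)/((e-s)(t-s))}$ on the first block and $-\sqrt{(t-s)/((e-s)(e-t))}$ on the second. A direct calculation shows that the sum of squared coefficients equals $1$, so the linear combination has $\psi_2$-norm bounded by a universal multiple of $\sigma$. A standard sub-Gaussian tail bound (Hoeffding-type, as invoked in \Cref{lem:max}) then gives $\mathbb{P}\{|\widetilde{Y}^{s,e}_t - \widetilde{f}^{s,e}_t| > \gamma\} \leq e \exp(-c\gamma^2/\sigma^2)$ for a universal $c > 0$. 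Since there are at most $n^3$ admissible triples $(s,t,e)$, a union bound yields the stated inequality for $\mathcal{A}_1(\gamma)$.

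For $\mathcal{A}_2(\gamma)$, the same idea is even simpler: for fixed $0 \leq s < e \leq n$, the normalized partial sum $(e-s)^{-1/2}\sum_{i=s+1}^{e}(Y_i - f_i)$ is a sum of independent centered sub-Gaussians with coefficients whose squares sum to $1$, hence it is sub-Gaussian with parameter of order $\sigma$. The same tail estimate, combined with a union bound over the at most $n^2$ pairs $(s,e)$, gives the bound on $\mathbb{P}\{\mathcal{A}_2(\gamma)\}$.

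For $\mathcal{M}$, the argument is combinatorial. Fix a change point $\eta_k$ and note $|\mathcal{S}_k| \geq \Delta/4$ and $|\mathcal{E}_k| \geq \Delta/4$. Since the endpoints $(\alpha_m,\beta_m)$ are chosen independently and uniformly in $\{1,\ldots,n\}$, for each $m$ the probability that $\alpha_m \in \mathcal{S}_k$ and $\beta_m \in \mathcal{E}_k$ is at least $\Delta^2/(16 n^2)$; by independence across $m \in \{1,\ldots,M\}$, the probability that no drawn interval is good for $\eta_k$ is at most $(1 - \Delta^2/(16n^2))^M \leq \exp(-M\Delta^2/(16n^2))$. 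Finally, since $K \leq n/\Delta$, a union bound over the $K$ change points gives $\mathbb{P}\{\mathcal{M}^{c}\} \leq (n/\Delta)\exp(-M\Delta^2/(16n^2)) = \exp\{\log(n/\Delta) - M\Delta^2/(16n^2)\}$, which is exactly the stated bound.

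None of the three pieces looks like a serious obstacle; the only minor care needed is in the coefficient calculation for the CUSUM, to make sure the $\psi_2$ norm of $\widetilde Y^{s,e}_t - \widetilde f^{s,e}_t$ is uniformly bounded by a constant times $\sigma$ regardless of $(s,t,e)$, so that the union bound absorbs all the $n^3$ triples into the single $\exp(-c\gamma^2/\sigma^2)$ factor. The rest is union bounds and an elementary product-measure computation.
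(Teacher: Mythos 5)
Your proposal is correct and follows the same strategy as the paper's proof: for $\mathcal{A}_1$ and $\mathcal{A}_2$, express the relevant statistic as a unit-norm linear combination of centered sub-Gaussians, apply Hoeffding, and union-bound over $n^3$ (resp.\ $n^2$) index tuples; for $\mathcal{M}$, compute the per-interval hit probability $\geq \Delta^2/(16n^2)$, use independence of the $M$ draws, and union-bound over the at most $n/\Delta$ change points. The only difference is cosmetic: you work directly with the coefficients of the CUSUM linear combination, while the paper simply asserts the representation $|\sum_i w_i X_i|$ with $\sum_i w_i^2 = 1$, but the content is identical.
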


\begin{proof}
Since for any suitable triples $(s, t, e)$, both $\bigl|\widetilde{Y}^{s, e}_t - \widetilde{f}^{s, e}_t\bigr|$ and $(e - s)^{-1/2}\left|\sum_{i=s+1}^e (Y_i - f_i)\right|$ can be written in the form $\left|\sum_{i=s+1}^e w_i X_i \right|$, where $X_i$'s are centred sub-Gaussian random variables and $w_i$'s satisfy $\sum_{i = s+1}^e w_i^2 = 1$.  
	
It follows from Hoeffding inequality that there exists an absolute constant $c > 0$ only depending on $\sigma$ such that
	\[
	\mathbb{P}\bigl\{\mathcal{A}^c_1(\gamma)\bigr\} \leq  e\cdot n^3 \exp\left(-c\gamma^2/\sigma^2\right) \mbox{ and } \mathbb{P}\bigl\{\mathcal{A}^c_2(\gamma)\bigr\} \leq  e\cdot n^2 \exp\left(-c\gamma^2/\sigma^2\right).
	\]

Since  the number of change points are bounded by $n/\Delta $, it holds that
	\begin{align*}
		\mathbb{P}\bigl\{\mathcal{M}^c\bigr\} & \leq \sum_{k=1}^K \prod_{m =1}^M \bigl\{1 - \mathbb{P}\bigl(s_m \in \mathcal{S}_k, e_m \in \mathcal{E}_k\bigr)\bigr\}\leq K \{1-\Delta^2/(16n^2)\}^M \leq n/\Delta (1 - \Delta^2/(16n^2))^M \\
		& \leq \exp\left\{\log\left(\frac{n}{\Delta}\right) - \frac{M\Delta^2}{16n^2}\right\}.
	\end{align*}
\end{proof}

\subsection{Technical Details for Step 1}

\begin{lemma}\label{lemma-step1-1}
Under \Cref{assume:change}, let $0 \leq s < \eta_k < e \leq n$ be any interval satisfying 
	\[
		\min\{\eta_k - s, e - \eta_k\} \ge c_1\Delta,
	\] 
	with $c_1 > 0$.  Then,
	\[
	\max_{s < t < e} \bigl|\widetilde{f}_{t}^{s,e}\bigr| \geq (c_1/2) \kappa \Delta (e-s)^{-1/2}.
	\]
\end{lemma}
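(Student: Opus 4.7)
The plan is to obtain the lower bound by direct evaluation of the CUSUM of the signal at $t = \eta_k$. Write $a = \eta_k - s$, $b = e - \eta_k$, and $L = e - s = a + b$, so that the hypothesis becomes $\min\{a, b\} \geq c_1 \Delta$.

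In the regime relevant to every application of this lemma in the paper (in particular Step 1 of the proof of \Cref{thm-wbs}, where $(s_m,e_m) \subset [\eta_k - 3\Delta/4,\eta_k + 3\Delta/4]$ is forced to contain no other change point by the minimal spacing $\Delta$), the interval $(s,e]$ contains only the change point $\eta_k$. Thus $f_i \equiv f_{\eta_k}$ for $i \in (s, \eta_k]$ and $f_i \equiv f_{\eta_k+1}$ for $i \in (\eta_k, e]$. Substituting these constant values into \Cref{def-cusum} yields
\[
\widetilde{f}^{s,e}_{\eta_k}
\;=\; \sqrt{\tfrac{b}{La}}\, a\, f_{\eta_k} - \sqrt{\tfrac{a}{Lb}}\, b\, f_{\eta_k+1}
\;=\; \sqrt{\tfrac{ab}{L}}\,\bigl(f_{\eta_k} - f_{\eta_k+1}\bigr),
\]
so $|\widetilde{f}^{s,e}_{\eta_k}| = \sqrt{ab/L}\,\kappa_k$. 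Since $\min\{a, b\} \geq c_1 \Delta$ gives $ab \geq c_1^2 \Delta^2$, and $\kappa_k \geq \kappa$, one obtains
\[
\max_{s < t < e}\bigl|\widetilde{f}^{s,e}_t\bigr| \;\geq\; \bigl|\widetilde{f}^{s,e}_{\eta_k}\bigr| \;\geq\; \frac{c_1 \kappa \Delta}{\sqrt{e-s}},
\]
which is in fact a factor of two stronger than the advertised bound; the extra $1/2$ in the lemma statement is pure slack. The argument is essentially a one-line CUSUM computation.

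The only genuinely subtle point is what to do if one allows $(s,e]$ to contain additional change points besides $\eta_k$, which the hypotheses technically permit when $c_1 < 1$ (though the paper never invokes the lemma in this situation). Then the two intervals $(s,\eta_k]$ and $(\eta_k, e]$ are no longer constant, $\bar f_{(s,\eta_k]}$ and $\bar f_{(\eta_k,e]}$ are weighted averages over piecewise-constant pieces, and the identity above can even vanish under a symmetric cancellation. I view this as the only place where real work would be needed: one would appeal to the classical observation that the maximum of $t \mapsto |\widetilde{f}^{s,e}_t|$ over a piecewise-constant $f$ is attained at some true change point and then evaluate at that change point, using the minimal-spacing assumption $\Delta$ to ensure that the jump at $\eta_k$ cannot be fully cancelled by the remaining pieces. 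The factor-of-two slack in the stated bound is comfortably sufficient to absorb the extra bookkeeping that this case would introduce.
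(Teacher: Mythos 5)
Your direct computation for the single-change-point case is correct and in fact yields the sharper constant $c_1\kappa\Delta(e-s)^{-1/2}$, as you note. The paper, by contrast, gives no computation at all: its proof of \Cref{lemma-step1-1} is a bare citation to Lemma~2.4 of \cite{venkatraman1992consistency}, which is the general multi-change-point statement. So you and the paper take genuinely different routes — you substitute an elementary one-line CUSUM evaluation for the paper's appeal to Venkatraman's structure theory, which is pleasant and self-contained precisely in the regime (at most one change point in $(s,e)$) that Step~1 of the proof of \Cref{thm-wbs} actually invokes.

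That said, the lemma as stated in the paper places no restriction on the number of change points in $(s,e)$, and your treatment of the general case is only a sketch, not a proof. The difficulty is exactly where you point: evaluating $\widetilde f^{s,e}_{\eta_k}$ when $(s,\eta_k]$ or $(\eta_k,e]$ contains further change points gives $\sqrt{ab/L}\,\bigl(\overline f_{(s,\eta_k]}-\overline f_{(\eta_k,e]}\bigr)$, which can vanish, so a lower bound at $\eta_k$ itself is unavailable. The fix you gesture at — that the CUSUM maximum is attained at some change point, and the $\Delta$-spacing prevents full cancellation — is the right intuition, but "evaluate at that change point" is circular as written: you do not know a priori at which change point the maximum sits, nor that the CUSUM there is large; that is precisely what needs proving, and Venkatraman's argument (Lemmas~2.2–2.4 there, partially restated as \Cref{lem-cov-7} here) does nontrivial work to establish it. If you want your proof to stand on its own without the Venkatraman citation, either restrict the lemma statement to intervals with a single change point (which suffices for every application in this paper), or carry out the general argument in full.
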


\begin{proof}  
See Lemma~2.4 in \cite{venkatraman1992consistency}.
\end{proof}

\begin{lemma}\label{lemma:cusum two detected change point bound}
	Let $[s,e]$ contain only two  change points $\eta_{k}, \eta_{k+1}$.
		Then
		$$\sup_{s\le t\le e}  | \widetilde f^{s,e}_{t} |  \le  \sqrt {e-\eta_{k+1} }\kappa_{k+1}  + \sqrt{\eta_k-s} \kappa_k. $$
\end{lemma}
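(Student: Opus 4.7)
The plan is to exploit the linearity of the CUSUM operator in $f$ and decompose the piecewise-constant mean on $[s,e]$ into a constant background plus two indicator bumps, one associated with each change point. First I would write $f_i = a$ for $i \in (s,\eta_k]$, $f_i = b$ for $i \in (\eta_k,\eta_{k+1}]$, $f_i = c$ for $i \in (\eta_{k+1},e]$, so that $\kappa_k = |a-b|$ and $\kappa_{k+1} = |c-b|$. Then I would rewrite
\[
f_i = b + (a-b)\mathbf{1}_{\{s < i \le \eta_k\}} + (c-b)\mathbf{1}_{\{\eta_{k+1} < i \le e\}}, \qquad i \in (s,e].
\]
Setting $u_i = \mathbf{1}_{\{s < i \le \eta_k\}}$ and $v_i = \mathbf{1}_{\{\eta_{k+1} < i \le e\}}$, since the CUSUM of the constant signal vanishes and the map $f \mapsto \widetilde f^{s,e}_t$ is linear, we get the identity
\[
\widetilde f^{s,e}_t = (a-b)\,\widetilde u^{s,e}_t + (c-b)\,\widetilde v^{s,e}_t.
\]

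Next I would bound the CUSUM of each indicator. Using the equivalent representation $\widetilde u^{s,e}_t = \sqrt{(t-s)(e-t)/(e-s)}\,(\overline{u}_{(s,t]} - \overline{u}_{(t,e]})$, a short direct calculation (splitting into the cases $t \le \eta_k$ and $t > \eta_k$) shows that $|\widetilde u^{s,e}_t|$ is unimodal in $t$, with maximum at $t = \eta_k$ equal to
\[
\sqrt{\frac{(\eta_k - s)(e - \eta_k)}{e-s}} \le \sqrt{\eta_k - s}.
\]
The analogous argument, or the observation that the problem is symmetric under $t \mapsto s+e-t$, gives $\sup_{s \le t \le e} |\widetilde v^{s,e}_t| \le \sqrt{e - \eta_{k+1}}$.

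Finally, applying the triangle inequality pointwise and then taking the supremum yields
\[
\sup_{s \le t \le e} |\widetilde f^{s,e}_t| \le \kappa_k \sup_{s \le t \le e} |\widetilde u^{s,e}_t| + \kappa_{k+1} \sup_{s \le t \le e} |\widetilde v^{s,e}_t| \le \kappa_k\sqrt{\eta_k - s} + \kappa_{k+1}\sqrt{e - \eta_{k+1}},
\]
which is the claimed inequality. There is no real obstacle here; the only step that needs any care is the unimodality / maximization calculation for the indicator CUSUM, and even that reduces to checking monotonicity of an elementary one-variable function on each side of $\eta_k$.
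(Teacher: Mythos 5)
Your proof is correct, and it takes a genuinely different, somewhat cleaner route than the paper's. The paper's proof defines an auxiliary piecewise-constant sequence $g$ that agrees with $f$ on $(\eta_k, e]$ but is constant on $(s,\eta_k]$ (so $g$ has only one change point, at $\eta_{k+1}$), bounds $|\widetilde f^{s,e}_t-\widetilde g^{s,e}_t|\le \sqrt{\eta_k-s}\,\kappa_k$ \emph{only for $t\ge \eta_k$}, and then closes the argument by appealing to the fact (\Cref{lem-cov-7}) that $\sup_t |\widetilde f^{s,e}_t|$ is attained at one of the change points $\eta_k,\eta_{k+1}$, both of which lie in $[\eta_k,e]$; the one-change-point bound of \Cref{lem-cov-17} then controls $\sup_t|\widetilde g^{s,e}_t|$. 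Your version writes the same underlying decomposition more symmetrically --- $f = b\,\mathbf 1 + (a-b)u + (c-b)v$ with $g$ corresponding to $b\,\mathbf 1 + (c-b)v$ and $f-g$ to $(a-b)u$ --- and then applies the triangle inequality to the suprema directly. This dispenses entirely with the ``maximum occurs at a change point'' lemma, at the small price of verifying the unimodality/maximum of the indicator CUSUM on both sides of $\eta_k$ rather than on one side only; that verification is exactly the content of \Cref{lem-cov-17}, so nothing new is needed. Both proofs reduce to the same single-change-point computation; yours just packages it via linearity rather than via a perturbation of $g$, which makes it a bit more self-contained and easier to generalize to more change points.
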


\begin{proof}
	Consider the sequence $\{g_t\}_{t=s+1}^e $ be such that 
		\[
		g_t=
		\begin{cases}
		f_{\eta_{k}}, & \text{if} \quad s +1\le  t< \eta_{k},
		\\
		f_t, & \text{if} \quad \eta_{k}  \le t \le  e. 
		\end{cases}
		\]

	For any  $t\ge \eta_k  $,
		\begin{align*}
		&\widetilde f^{s,e}_{t} - \widetilde g^{s,e}_{t} 
		\\
		=&
	\sqrt { \frac{e-t}{(e-s)(t-s)} }  \left(	\sum_{i=s+1}^{t} f_{i}  
	- \sum_{i=s+1}^{\eta_k} f _{\eta_k}  - \sum_{i=\eta_k +1}^{t  }f_i  
	 \right) \\
	- 	 &\sqrt { \frac{t-s}{(e-s)(e-t) } }  \left(	\sum_{i=t+1}^{e} f_{i} - \sum_{i=t+1}^{e} f_{i}
	 \right)
	 \\
	 =	&\sqrt { \frac{e-t}{(e-s)(t-s)} } (\eta_k-s) (f_{\eta_{k }} -f_{\eta_{k-1}})  . 
		\end{align*}
		So for $t \ge \eta_k$, $   |\widetilde f^{s,e}_{t} - \widetilde g^{s,e}_{t} |  \le \sqrt{\eta_k-s} \kappa_k$.
		Since $\sup_{s\le t\le e}    |\widetilde f^{s,e}_{t} |= \max\{  |\widetilde f^{s,e}_{\eta_k} | ,  |\widetilde f^{s,e}_{\eta_{k+1}}| \},
		$ and that 
		\begin{align*}
		 \max\{  |\widetilde f^{s,e}_{\eta_k}| , | \widetilde f^{s,e}_{\eta_{k+1}}|\}&
		\le \sup_{s\le t \le e} | \widetilde g^{s,e}_{t}   | +  \sqrt{\eta_k-s} \kappa_k
		\\
		&\le   \sqrt {e-\eta_{k+1} }\kappa_{k+1}  + \sqrt{\eta_r-s} \kappa_k
		\end{align*}
		where the last inequality follows form the fact that $g_t $ has only one change point in $[s,e]$.
			\end{proof}

\subsection{Technical details for Step 2}

In this section, eight results will be provided.  Before we go into details, we show the road map leading to complete the proof of \Cref{thm-wbs} in \Cref{fig-rm}.

\vskip 3mm
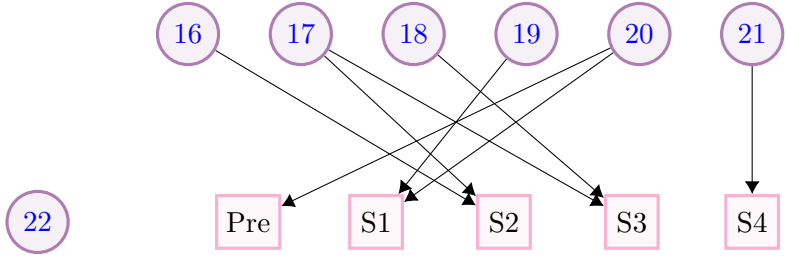
\begin{figure}
\centering
\begin{tikzpicture}[roundnode/.style={circle, draw=Fuchsia!60, fill=Fuchsia!5, very thick, minimum size=7mm}, squarednode/.style={rectangle, draw=CarnationPink!60, fill=CarnationPink!5, very thick, minimum size=7mm}]
\node[roundnode] (7) {\ref{lem-cov-7}};
\node[roundnode] (8) [right of= 7, xshift = 0.5cm] {\ref{lem-cov-8}};
\node[roundnode] (16) [right of= 8, xshift = 0.5cm] {\ref{lem-cov-16}};
\node[roundnode] (17) [right of= 16, xshift = 0.5cm] {\ref{lem-cov-17}};
\node[roundnode] (18) [right of= 17, xshift = 0.5cm] {\ref{lem-cov-18}};
\node[roundnode] (11) [right of= 18, xshift = 0.5cm] {\ref{lem-cov-11}};
\node[roundnode] (12) [below of= 7, yshift = -1.5cm, xshift = -2cm] {\ref{lem-cov-12}};

\node[squarednode] (pre) [below of= 8, yshift = -1.5cm, xshift = -0.7cm] {Pre};
\node[squarednode] (S1) [below of= 16, yshift = -1.5cm, xshift = -0.5cm] {S1};
\node[squarednode] (S2) [below of= 17, yshift = -1.5cm, xshift = -0.3cm] {S2};
\node[squarednode] (S3) [below of= 18, yshift = -1.5cm, xshift = -0.1cm] {S3};
\node[squarednode] (S4) [below of= 11, yshift = -1.5cm, xshift = 0cm] {S4};
 
\draw[-{Latex[width=2mm]}] (18) -- (pre);
\draw[-{Latex[width=2mm]}] (17) -- (S1);
\draw[-{Latex[width=2mm]}] (18) -- (S1);
\draw[-{Latex[width=2mm]}] (7) -- (S2);
\draw[-{Latex[width=2mm]}] (8) -- (S2);
\draw[-{Latex[width=2mm]}] (16) -- (S3);
\draw[-{Latex[width=2mm]}] (8) -- (S3);
\draw[-{Latex[width=2mm]}] (11) -- (S4);

\end{tikzpicture}
\caption{Road map to complete the Step 2 in the proof of \Cref{thm-wbs}.  The circles are lemmas, and the squares are the steps in the proof of \Cref{lem-cov-12}.  The directed edges mean the heads of the edges are used in the tails of the edges. \label{fig-rm}}
\end{figure}

\vskip 3mm

\begin{lemma}\label{lem-cov-7}
	Suppose $(s, e) \subset (0, n)$ is a generic interval satisfying
		\begin{equation*}
			\eta_{k-1} \le s \le \eta_k \le \ldots \le \eta_{k+q} \le e \le \eta_{k+q+1}, \quad q\ge 0.
		\end{equation*} 
	Then there exists a continuous function $\widetilde F_{t}^{s,e}: \, [s, e]\to \mathbb R$ such that $\widetilde F_{t}^{s,e} = \widetilde f_{t}^{s, e}$ for every $t \in [s, e] \cap \mathbb Z$ with the following additional properties.
	\begin{itemize}
	\item [(i)]	$|\widetilde F_t^{s,e}|$ is maximized at the change points within $[s, e]$.  In other words,
		\[
			\argmax_{s\le t\le e}  |\widetilde F_t^{s,e}| \cap \bigl\{\eta_{k},\ldots, \eta_{k+q}\bigr\} \neq \emptyset.
		\]
	\item [(ii)] If $\widetilde F_t^{s,e} > 0$ for some $ t \in (s, e)$, then $\widetilde F_t^{s,e}$ is either monotonic or decreases and then increases within each of the interval $(s, \eta_k), \ldots, (\eta_{k+q}, e)$. 
	\end{itemize}
\end{lemma}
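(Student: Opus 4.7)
The plan is to construct $\widetilde F_t^{s,e}$ as the natural continuous extension of the CUSUM of means: extend $f$ to a step function on $[s,e]$ by setting $f(u)=f_{\lceil u \rceil}$ for $u\in(s,e]$, and define, for $t\in(s,e)$,
$$\widetilde F_t^{s,e} \;=\; \sqrt{\frac{e-t}{(e-s)(t-s)}}\int_s^t f(u)\,du \;-\; \sqrt{\frac{t-s}{(e-s)(e-t)}}\int_t^e f(u)\,du,$$
together with $\widetilde F_s^{s,e}=\widetilde F_e^{s,e}=0$. At integer $t$ the integrals collapse to the sums in \Cref{def-cusum}, so $\widetilde F_t^{s,e}=\widetilde f_t^{s,e}$; continuity on $[s,e]$ is immediate from the fact that the two prefactors stay bounded as $t$ approaches the endpoints and both integrals vanish in the appropriate limit.

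The real work is in (ii). The intervals $(s,\eta_k),(\eta_k,\eta_{k+1}),\dots,(\eta_{k+q},e)$ are precisely the maximal subintervals of $[s,e]$ on which $f$ is constant, so I would fix one such $(a,b)$ with $f\equiv \mu$ and analyse $\widetilde F_t^{s,e}$ there. Using the equivalent representation
$$\widetilde F_t^{s,e} \;=\; \sqrt{\tfrac{(e-t)(t-s)}{e-s}}\bigl(\overline f_{(s,t]}-\overline f_{(t,e]}\bigr),$$
on $(a,b)$ both averages are smooth in $t$ with
$$\tfrac{d}{dt}\overline f_{(s,t]} \;=\; \tfrac{\mu-\overline f_{(s,t]}}{t-s},\qquad \tfrac{d}{dt}\overline f_{(t,e]} \;=\; \tfrac{\mu-\overline f_{(t,e]}}{e-t}.$$
Plugging these into the product rule and clearing the positive factor $\sqrt{(e-t)(t-s)/(e-s)}$, the sign of $(d/dt)\widetilde F_t^{s,e}$ on $(a,b)$ equals the sign of a rational expression whose numerator (after multiplying by the positive quantity $(t-s)(e-t)$) is a polynomial of degree at most two in $t$. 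The critical algebraic step is to show this polynomial has at most one root in $(a,b)$ and that, whenever $\widetilde F_t^{s,e}>0$ at some point of $(s,e)$, any sign change on $(a,b)$ is from negative to positive, thereby ruling out the ``up-then-down'' pattern and giving the monotone-or-V-shaped dichotomy.

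Once (ii) is established, (i) is a short consequence: on each interval between consecutive change points $|\widetilde F_t^{s,e}|$ is maximised at an endpoint (no interior maximum is available when the piece is monotone or V-shaped), so the global maximum over $[s,e]$ is attained at some point of $\{s,\eta_k,\dots,\eta_{k+q},e\}$; since $\widetilde F_s^{s,e}=\widetilde F_e^{s,e}=0$ while there is at least one change point inside (otherwise $\widetilde F^{s,e}\equiv 0$ and there is nothing to prove), the maximum must be realised at some $\eta_{k+j}$. I expect the main obstacle to be the bookkeeping in step (ii): although the derivative of $\widetilde F_t^{s,e}$ on each smooth piece has a transparent closed form, tracking the signs of the numerator polynomial carefully enough to exclude an interior local maximum when $\widetilde F^{s,e}$ is positive is the delicate part of the argument.
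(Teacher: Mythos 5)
The paper does not give a proof of this lemma; it only cites Lemmas 2.2 and 2.3 of \cite{venkatraman1992consistency} together with a symmetry remark. Your continuous interpolation of $\widetilde f^{s,e}$ and the piece-by-piece derivative analysis on the intervals of constancy are exactly the Venkatraman skeleton, so the route you chose matches the cited source.

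There are, however, two genuine gaps. First, the proof is not actually carried out: you name the ``critical algebraic step'' and acknowledge it as the delicate part, which leaves the argument as a plan rather than a proof. In fact the algebra is cleaner than you anticipate. Writing $\widetilde F_t^{s,e}=\phi(t)\big/\sqrt{(e-s)(t-s)(e-t)}$ with $\phi(t)=(e-s)\int_s^t f - (t-s)\int_s^e f$, the function $\phi$ is piecewise linear, and a direct computation shows that the expression controlling $\sign\bigl(\tfrac{d}{dt}\widetilde F_t^{s,e}\bigr)$ on a constancy interval is \emph{linear} in $t$ (the $t^2$ terms cancel), so ``at most one sign change'' is automatic, not something to be shown; the part still unaddressed is why that single sign change, when it exists, is $-\to+$ rather than $+\to-$, and how that connects to the sign of $\widetilde F$. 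Second, your deduction of (i) from (ii) is incorrect as written: (ii) constrains the shape of $\widetilde F$, not of $|\widetilde F|$, and a V-shaped piece on which $\widetilde F$ dips far below zero has $|\widetilde F|$ strictly maximized at an interior point, so the parenthetical ``no interior maximum is available when the piece is monotone or V-shaped'' is false for $|\widetilde F|$. The correct argument is to suppose $|\widetilde F|$ attains its global maximum at an interior $t^*$ of a constancy interval; then $\widetilde F$ itself has a local extremum at $t^*$, and if $\widetilde F(t^*)>0$ this would be a local maximum, contradicting (ii), while if $\widetilde F(t^*)<0$ one applies the mirrored statement to $\{-f_i\}$ (which is exactly the paper's remark following the lemma). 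Finally, a small slip: $\tfrac{d}{dt}\overline f_{(t,e]}=(\overline f_{(t,e]}-\mu)/(e-t)$, with the opposite sign from what you wrote.
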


The proof of \Cref{lem-cov-7} can be found in Lemmas 2.2 and 2.3 of  \cite{venkatraman1992consistency}.  We remark that if $\widetilde F_t^{s,e}\le 0 $ for all $t \in (s, e)$, then it suffices to consider the time series $\{-f_i\}_{i=1}^n$ and a similar result as in the second part of   \Cref{lem-cov-7} still holds.

Our next lemma is an adaptation of a result first obtained by \cite{venkatraman1992consistency}, which quantifies how fast the CUSUM statistics decays around a good change point.

\begin{lemma}[\cite{venkatraman1992consistency} Lemma 2.6]\label{lem-cov-8}
Let $[s, e]\subset [1,n]$ be any generic interval.  For some $c_1, c_2 >0$ and $\gamma > 0$ such that
	\begin{align}
		& \min \{\eta_k-s , e-\eta_k\} \ge c_1\Delta, \label{eq:ven 0} \\
		& \widetilde{f}_{\eta_k} \ge c_2\kappa  \Delta (e-s)^{-1/2}, \label{eq:ven 1}
	\end{align}
	and suppose there exists a sufficiently small constant $c_3 > 0$ such that 
	\begin{align}
		\max_{s\le t\le e} |\tf_t | - \tf_{\eta_k} \le 2\gamma  \le  c_3 \kappa \Delta^3 (e-s)^{-5/2}. \label{eq:ven 2}
	\end{align}
	Then there exists an absolute constant $c>0$ such that if the point $d\in [s,e]$ is such that $|d-\eta_k| \le  c_1\Delta/16$, then
	\[
		\widetilde f_{\eta_k}^{s,e} - \widetilde f_{d}^{s,e}   > c \widetilde f^{s,e}_{\eta_k} |\eta_k-d | \Delta(e-s)^{-2}. 
	\]
\end{lemma}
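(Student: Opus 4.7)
The plan is to work with the continuous interpolant $\widetilde F_t^{s,e}$ of $\tf_t$ supplied by \Cref{lem-cov-7}, establish a uniform lower bound on $|\widetilde F'(t)|$ on a neighbourhood of $\eta_k$, and then integrate. Without loss of generality assume $\tf_{\eta_k}>0$ (else replace $f$ with $-f$) and $d\ge \eta_k$ (the other case being symmetric). Writing $a=e-s$, $T=\sum_{i=s+1}^e f_i$, and letting $S(t)$ be the piecewise-linear extension of the partial sum $\sum_{i=s+1}^{\lfloor t\rfloor}f_i$, one has the closed form
\[
\widetilde F(t) \;=\; \frac{a\,S(t)-(t-s)\,T}{\sqrt{a\,(t-s)\,(e-t)}}.
\]

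Since consecutive change points are at least $\Delta$ apart and $c_1\Delta/16<\Delta$, the interval $[\eta_k-c_1\Delta/16,\eta_k+c_1\Delta/16]$ contains no change point other than $\eta_k$. In particular, on $(\eta_k,\eta_k+c_1\Delta/16]$ the signal $f$ is a constant $\mu_+$, so $S(t)$ is affine and $\widetilde F(t)$ is smooth with an explicit derivative. By \Cref{lem-cov-7}(ii), $\widetilde F$ is either monotonic or V-shaped on $(\eta_k,\eta_{k+1})$; the hypothesis $\max_t|\tf_t|-\tf_{\eta_k}\le 2\gamma$ caps any upward "recovery" past the minimum of a V-shape by $2\gamma$, and the further bound $2\gamma\le c_3\kappa\Delta^3(e-s)^{-5/2}$ with $c_3$ small forces the decreasing branch of $\widetilde F$ to persist throughout $[\eta_k,\eta_k+c_1\Delta/16]$.

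Combining this monotonicity with the explicit form of $\widetilde F'(t)$ and the lower bound $\tf_{\eta_k}\ge c_2\kappa\Delta(e-s)^{-1/2}$ from \eqref{eq:ven 1}, the algebra yields an absolute constant $c>0$ such that
\[
-\widetilde F'(t) \;\ge\; c\,\tf_{\eta_k}\,\Delta\,(e-s)^{-2}, \qquad t\in[\eta_k,\eta_k+c_1\Delta/16].
\]
Integrating from $\eta_k$ to $d$ then produces the desired inequality, since $\tf_{\eta_k}-\tf_d=\widetilde F(\eta_k)-\widetilde F(d)=-\int_{\eta_k}^d \widetilde F'(u)\,du$.

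The hard part is precisely the derivative bound above. Differentiating $N/D$ with $N$ affine and $D=\sqrt{a(t-s)(e-t)}$ produces two terms of comparable magnitude which can partially cancel, and the smallness of $c_3$ in \eqref{eq:ven 2} (combined with \eqref{eq:ven 1}) is exactly what excludes such cancellation on the relevant window: any near cancellation would force $\widetilde F$ to climb back within $[\eta_k,\eta_k+c_1\Delta/16]$ by an amount exceeding $2\gamma$, contradicting the hypothesis. Carefully tracking the geometric and algebraic constants through this calculation is where the bulk of \cite{venkatraman1992consistency}'s original argument is spent, and I would follow that computation closely.
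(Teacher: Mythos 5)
The paper's proof of this lemma (following Venkatraman's Lemma~2.6) does \emph{not} proceed via a pointwise derivative bound; it computes the finite difference $\widetilde f^{s,e}_{\eta_k}-\widetilde f^{s,e}_{d}$ directly and, in the multi-change-point case, decomposes it as $E_{1l}(1+E_{2l})+E_{3l}$. The hypothesis $\max_t|\tf_t|-\tf_{\eta_k}\le 2\gamma$ is used only to control the \emph{finite-difference} term $\tf_{\eta_k+h}-\tf_{\eta_k}$ (entering $E_{3l}$), not to establish monotonicity of $\widetilde F$ or a uniform slope. Your proposal replaces this by the claim that $-\widetilde F'(t)\ge c\,\tf_{\eta_k}\Delta(e-s)^{-2}$ uniformly on $[\eta_k,\eta_k+c_1\Delta/16]$, followed by integration.

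That uniform derivative bound is the gap. First, \Cref{lem-cov-7} only guarantees $\widetilde F$ is monotone or V-shaped on $(\eta_k,\eta_{k+1})$, and nothing prevents the V-shape minimum $t^*$ from sitting inside, or just beyond, the window $[\eta_k,\eta_k+c_1\Delta/16]$; there $\widetilde F'(t)\to 0$, so no constant $c>0$ can bound $-\widetilde F'$ from below uniformly on the window. Your suggested remedy — ``any near cancellation would force $\widetilde F$ to climb back by more than $2\gamma$'' — does not follow: the hypothesis caps $\max_t|\tf_t|-\tf_{\eta_k}$, i.e.\ the excursion \emph{above} $\tf_{\eta_k}$, but places no lower bound on the rise of $\widetilde F$ past its V-shape bottom, which can be arbitrarily gentle. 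Second, even granting monotonicity on the window, passing from a finite-difference bound (which is what actually holds, and is what the lemma asserts) to a pointwise derivative bound is the wrong direction; the finite difference can satisfy the required linear-in-$|d-\eta_k|$ lower bound even while the instantaneous slope degenerates near the end of the window. Finally, the proposal defers the ``algebra'' to Venkatraman, but that source proves the finite-difference decomposition, not your derivative inequality, so the deferral does not fill the hole. To repair the argument you would need to reproduce the $E_{1l}$, $E_{2l}$, $E_{3l}$ decomposition (or an equivalent), bounding $E_{3l}$ via the $2\gamma$ hypothesis and showing $|E_{2l}|\le 1/2$, exactly as in the paper.
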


\begin{proof} 
Without loss of generality, assume that $d \ge \eta_k$.  Following the argument of \cite{venkatraman1992consistency} Lemma~2.6, it suffices to consider two cases: (1) $\eta_{k+1} > e$, and (2) $\eta_{k+1} \le e$.

\vskip 3mm
\noindent \textbf{Case 1.} Let $E_l$ be defined as in the case 1 in \cite{venkatraman1992consistency} Lemma~2.6.  There exists a $c' > 0$ such that, for every $d\in [\eta_{k},\eta_k+ c_1\Delta/16]$, $\widetilde f_{\eta_k}^{s,e} - \widetilde f_{d}^{s,e}$ (which in the notation of \cite{venkatraman1992consistency} is the term $E_l$) can be written as 
	\begin{align*}
		& \tf_{\eta_k} | d-\eta_k|\frac{e-s }{\sqrt {e-\eta_k} \sqrt {\eta_k-s  +(d-\eta_k)} } \\
		& \hspace{3cm}\times  \frac{1}{\sqrt { (\eta_k-s  +(d-\eta_k) ) (e-\eta_k)} + \sqrt { (\eta_k -s ) (e-\eta_k -(d-\eta_k))}}.
    \end{align*}
	Using the inequality $(e-s)\ge 2c_1\Delta $, the previous expression is lower bounded by
	\[
		c'|d-\eta_k| \tf_{\eta_k}\Delta (e-s)^{-2}.
    \]

\vskip 3mm
\noindent  \textbf{Case 2.} Let $h = c_1\Delta/8 $  and  $l=d-\eta_k \le h/2$. Then, following closely the initial calculations for case 2 of Lemma 2.6 of \cite{venkatraman1992consistency}, we obtain that
	\[
		\widetilde f_{\eta_k}^{s,e}   -\widetilde f_{d}^{s,e}  \ge E_{1l} (1+E_{2l}) +E_{3l},
	\]
	where
	\begin{align*}
		E_{1l} &=\frac{ \tf_{\eta_k} l(h-l) } {\sqrt{(\eta_k-s+l )(e-\eta_k -l)   }   \left( \sqrt{ (\eta_k-s+l )(e-\eta_k -l)} +\sqrt{ (\eta_k-s)(e-\eta_k)} \right)}, \\
        E_{2l} &= \frac{((e-\eta_k-h )- (\eta_k-s))((e-\eta_k-h )- (\eta_k-s) -l)}{\sqrt{ (\eta_k-s+l )(e-\eta_k -l)} + \sqrt{ (\eta_k-s+h)(e-\eta_k-h)}} \\
        & \hspace{3cm} \times \frac{1}{\sqrt{ (\eta_k- s)(e-\eta_k)} +\sqrt{ (\eta_k-s+h ) (e-\eta_k-h)}},\\
	\end{align*}
	and
	\begin{align*}
		E_{3l} = -\frac{(\tf_{\eta_k+h}  -\tf_{\eta_k} )l }{h}  \sqrt{\frac{ (\eta_k -s+h)(e-\eta_k-h) }{  (\eta_k-s+l )(e-\eta_k-l) }}.
	\end{align*}
	Since  $h= c_1\Delta/8$ and $l\le h/2$, it holds that
	\begin{align*}
		E_{1l} \ge (c_1/16)\tf_{\eta_k} | d-\eta| \Delta (e-s)^{-2}.
	\end{align*}
	Observe that 
	\begin{equation}\label{eq:l and h-1}
		\eta_k-s\le  \eta_k-s +l \le \eta_k-s +h \le 9(\eta_k-s)/8
	\end{equation}
	and
	\begin{equation}\label{eq:l and h-2}
		 e-\eta_k \ge e-\eta_k-l \ge e-\eta_k -h \ge 7(e-\eta_k)/8. 
	\end{equation}
	Thus
	\begin{align*}
		E_{2l } & = \frac{((e-\eta_k-h )- (\eta_k-s))^2 + l(h+ \eta_k-s) -l (e-\eta_k) }{\left( \sqrt{ (\eta_k-s+l )(e-\eta_k -l)} + \sqrt{ (\eta_k-s+h)(e-\eta_k-h)} \right)} \\
		& \hspace{3cm} \times  \frac{1}{\left(\sqrt{ (\eta_k- s)(e-\eta_k)} +\sqrt{ (\eta_k-s+h ) (e-\eta_k-h)} \right) } \\
		\ge & \frac{- l(e-\eta_k) }{ (\eta_k-s+h)(e-\eta_k-h) } \ge  \frac{- l(e-\eta_k) }{ (\eta_k-s)(7/8)(e-\eta_k) } \ge -1/2,
	\end{align*}
	where \eqref{eq:l and h-1} and \eqref{eq:l and h-2} are used in the second inequality and the fact that $l\le h/2 \le c_1\Delta/16 \le (\eta_k-s )/16 $ is used in the last inequality.
	
For $E_{3l}$, observe that 
	\[
		\tf_{\eta_k+h}  -\tf_{\eta_k} \le  | \tf_{\eta_k+h} | -\tf_{\eta_k} \le \max_{s\le t\le e} |   \tf_{t}  |-\tf_{\eta_k} \le 2\gamma.
	\]
	This combines with \eqref{eq:ven 0} and that $l/2 \le h= c_1\Delta/8$, implying that 
	\[
		\eta_k-s\le  \eta_k-s +l \le \eta_k-s +h \le 9(\eta_k-s)/8 \mbox{ and } e-\eta_k \ge e-\eta_k-l \ge e-\eta_k -h \ge 7(e-\eta_k)/8.
	\]
	Therefore, with a sufficiently small constant $c'' > 0$, it holds that
	\begin{align*}
		E_{3l} & \ge - \frac{ 2(d-\eta_k) \gamma}{c_1\Delta/8}  \sqrt { \frac{(9/8)(\eta_k-s) (e- \eta_k) }{ (\eta_k-s)(7/8) (e-\eta_k )} }  \ge - \frac{ 32(d-\eta_k) \gamma}{c_1\Delta}  \\
		& \ge -(c''/4) \tf_{\eta_k}   (d-\eta_k) \Delta (e-s)^{-2},
	\end{align*}
	where the first inequality follows from \eqref{eq:l and h-1} and \eqref{eq:l and h-1}, and the last inequality follows from \eqref{eq:ven 1} and \eqref{eq:ven 2}.  Thus,
	\[
		\widetilde f_{\eta_k}^{s,e}   -\widetilde f_{d}^{s,e}  \ge E_{1l} (1+E_{2l}) +E_{3l} \ge (c''/4) \tf_{\eta_k}   | \eta_k-d| \Delta (e-s)^{-2}.
	\]
\end{proof}

\begin{lemma}\label{lem-cov-16}
Suppose $[s, e] \subset [1, n]$ such that $e-s\le C_R\Delta$, and that 
	\[
		\eta_{k-1} \le s\le \eta_k \le \ldots\le \eta_{k+q} \le e \le \eta_{k+q+1}, \quad q\ge 0.
	\]
	Denote
	\[
		\kappa_{\max}^{s,e} =\max \{\eta_{p} - \eta_{p-1}:\, k\le p \le k+q\}.
	\]
	Then for any $k-1 \le p \le k+q$, it holds that 
	\[
		\left|\frac{1}{e-s}\sum_{i=s}^e f_i - f_{\eta_p} \right| \le C_R\kse.
	\]
\end{lemma}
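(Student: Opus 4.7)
The plan is to bound the deviation of the piecewise-constant function $f$ from one of its values on a short interval by combining the minimal spacing condition (which caps the number of pieces $f$ can have on $(s,e]$) with a telescoping estimate on jump sizes. I read $\kappa_{\max}^{s,e}$ as the largest jump size $\kappa_p = |f_{\eta_p+1} - f_{\eta_p}|$ for $k \le p \le k+q$, which is the only dimensionally consistent reading of the lemma statement.

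First, I would exhibit the average as a convex combination of the values $f$ takes on the pieces contained in or overlapping $(s,e]$. Under Assumption~\ref{assume:change}, $f$ equals $f_{\eta_k}$ on $(s,\eta_k]$, equals $f_{\eta_{j+1}}$ on $(\eta_j,\eta_{j+1}]$ for $j=k,\ldots,k+q-1$, and equals $f_{\eta_{k+q+1}}$ on $(\eta_{k+q},e]$. Hence the mean $\overline f_{(s,e]}$ can be written as $\sum_{j=k}^{k+q+1} w_j f_{\eta_j}$, where the weights $w_j$ are the corresponding segment lengths divided by $e-s$ and satisfy $w_j\ge 0$, $\sum_j w_j=1$. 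Subtracting $f_{\eta_p}$ and applying the triangle inequality reduces the task to controlling $\max_{j} |f_{\eta_j} - f_{\eta_p}|$ uniformly over the indices $j$ appearing in the combination.

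Second, a simple telescoping identity gives $f_{\eta_j} - f_{\eta_p} = \sum_{l=p+1}^{j} (f_{\eta_l} - f_{\eta_{l-1}})$ when $j>p$ (and symmetrically for $j<p$), so by definition of $\kappa_l$ and of $\kappa_{\max}^{s,e}$ we obtain $|f_{\eta_j} - f_{\eta_p}| \le |j-p|\, \kappa_{\max}^{s,e}$. Third, the spacing lower bound $\eta_l - \eta_{l-1} \ge \Delta$ combined with $\eta_{k+q} - \eta_k \le e - s \le C_R \Delta$ forces $q \le C_R$. Consequently, for any $p \in \{k-1,\ldots,k+q\}$ and $j \in \{k,\ldots,k+q+1\}$, we have $|j-p| \le q+2$, which is of order $C_R$. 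Assembling the pieces yields $|\overline f_{(s,e]} - f_{\eta_p}| \le (q+2)\, \kappa_{\max}^{s,e}$, and the claimed bound follows after absorbing the additive $2$ into the constant $C_R$ (since $C_R > 1$).

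This is essentially a bookkeeping argument. The only points requiring care are the correct identification of which piece each summand $f_i$ belongs to (in particular the two boundary pieces $(s,\eta_k]$ and $(\eta_{k+q},e]$, which contribute the values $f_{\eta_k}$ and $f_{\eta_{k+q+1}}$, respectively) and the telescoping bound on successive jumps. I do not anticipate any substantive obstacle; the estimate is purely deterministic and uses no probabilistic input.
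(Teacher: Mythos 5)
Your proof is correct and follows essentially the same route as the paper's: split the average $\frac{1}{e-s}\sum_{i=s}^{e} f_i$ segment-by-segment over the pieces on which $f$ is constant, telescope the difference $f_{\eta_j}-f_{\eta_p}$ through consecutive jumps to get the bound $|j-p|\,\kappa_{\max}^{s,e}$, and use $e-s \le C_R\Delta$ together with the minimal spacing to cap the number of segments (and hence $|j-p|$) by a constant multiple of $C_R$. You were also right to read $\kappa_{\max}^{s,e}$ as the maximum \emph{jump size} $\max_{k\le p\le k+q}\kappa_p$ rather than the maximum spacing as written in the lemma statement; that is a typo in the paper (the same symbol is later defined as a maximum over $\kappa_p$ in Lemma~\ref{lem-cov-12}, and only the jump-size reading is dimensionally consistent with the claimed bound). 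The small slack in your final constant, $q+2$ versus $C_R$, matches the paper's own proof, which likewise obtains $C_R+1$ and silently absorbs it.
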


\begin{proof}
Since $e-s\le C_R\Delta$, the interval $[s,e]$ contains at most $C_R+1$ change points.  Observe that 
	\begin{align*}
 		& \left|\frac{1}{e-s}\sum_{i=s}^e f_i - f_{\eta_p} \right| \\
		= & \frac{1}{e-s} \left|  \sum_{i=s}^{\eta_k} (f_{\eta_{k-1}} - f_{\eta_p}) +  \sum_{i={\eta_k +1}}^{\eta_{k+1}}(f_{\eta_{k}} - f_{\eta_p}) + \ldots + \sum_{i={\eta_{k+q}+1}}^{e} (f_{\eta_{k+q}} - f_{\eta_p}) \right| \\
		\le & \frac{1}{e-s}  \sum_{i=s}^{\eta_k} |p-k|\kse +  \sum_{i={\eta_k +1}}^{\eta_{k+1}} |p-k-1|\kse  + \ldots + \sum_{i={\eta_{k+q}+1}}^{e} |p-k-q-1|\kse \\
		\le & \frac{1}{e-s} \sum_{i=s}^{e}  (C_R+1)\kse,
	\end{align*}
	where $|p_1 -p_2| \le C_R +1 $ for any $\eta_{p_1}, \eta_{p_2} \in [s,e]$ is used in the last inequality.
\end{proof}

\begin{lemma}\label{lem-cov-17}
If $\eta_k$ is the only change point in $(s, e)$, then
	\[
		|\widetilde f^{s,e}_{\eta_k} | = \sqrt { \frac{(\eta_k-s)(e-\eta_k)}{e-s}  } \kappa_k \le \sqrt { \min \{\eta_k -s , e -\eta_k\} } \kappa_k.
	\]
\end{lemma}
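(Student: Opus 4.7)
The plan is to compute $\widetilde f^{s,e}_{\eta_k}$ directly from the definition of the CUSUM statistic, exploiting the fact that $f$ is piecewise constant on $(s,\eta_k]$ and $(\eta_k,e]$ with a single jump of size $\kappa_k$ at $\eta_k$. Since $\eta_k$ is the unique change point in $(s,e)$, Assumption~\ref{assume:change} gives $f_i = a$ for $i \in (s,\eta_k]$ and $f_i = a \pm \kappa_k$ for $i \in (\eta_k,e]$, where $a = f_{\eta_k}$.

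First I would substitute $t=\eta_k$ in \Cref{def-cusum} applied to the deterministic sequence $\{f_i\}$, so that
\[
\widetilde f^{s,e}_{\eta_k} = \sqrt{\tfrac{e-\eta_k}{(e-s)(\eta_k-s)}}\,(\eta_k-s)\,a - \sqrt{\tfrac{\eta_k-s}{(e-s)(e-\eta_k)}}\,(e-\eta_k)\,(a\pm\kappa_k).
\]
Both terms contain a common factor of $\sqrt{(\eta_k-s)(e-\eta_k)/(e-s)}$; the contributions from $a$ cancel exactly, leaving
\[
\widetilde f^{s,e}_{\eta_k} = \mp\,\kappa_k\sqrt{\tfrac{(\eta_k-s)(e-\eta_k)}{e-s}},
\]
which gives the claimed equality after taking absolute values.

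For the inequality, I would observe that $e-s = (\eta_k-s)+(e-\eta_k)$, so
\[
\frac{(\eta_k-s)(e-\eta_k)}{e-s} \le \min\{\eta_k-s,\,e-\eta_k\},
\]
because, assuming without loss of generality that $\eta_k-s \le e-\eta_k$, the left-hand side equals $(\eta_k-s)\cdot\frac{e-\eta_k}{e-s} \le \eta_k-s$. Taking square roots and multiplying by $\kappa_k$ completes the bound.

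There is no real obstacle here: the statement is an elementary algebraic identity plus a one-line estimate, so the only care needed is to keep track of which of the two means is larger (so that the sign of the jump becomes irrelevant after taking $|\cdot|$) and to note that the identity $e-s=(\eta_k-s)+(e-\eta_k)$ is what makes the cancellation clean.
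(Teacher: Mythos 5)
Your proof is correct. The paper states \Cref{lem-cov-17} without proof, and your argument is the natural direct computation: substituting $t=\eta_k$ into \Cref{def-cusum}, using the piecewise-constant structure so that each sum collapses to a single value times the interval length, factoring out the common $\sqrt{(\eta_k-s)(e-\eta_k)/(e-s)}$ so the baseline $a$ cancels and only $\kappa_k$ survives, and then bounding $\frac{(\eta_k-s)(e-\eta_k)}{e-s}\le\min\{\eta_k-s,e-\eta_k\}$ via $e-s=(\eta_k-s)+(e-\eta_k)$. Nothing is missing.
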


\begin{lemma}\label{lem-cov-18}
Let $(s, e) \subset (0, n)$ contains two or more change points such that 
	\[
		\eta_{k-1} \le s\le \eta_k \le \ldots\le \eta_{k+q} \le e \le \eta_{k+q+1}, \quad q\ge 1.
	\]
	If $\eta_{k}-s \le  c_1\Delta$, for $c_1 > 0$, then
	\[
		|\widetilde f^{s,e}_{\eta_k}| \le \sqrt{c_1} |  \widetilde f^{s,e}_{\eta_{k+1}}| +2\kappa_k  \sqrt {\eta_k -s}. 
	\]	
\end{lemma}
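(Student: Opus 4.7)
The strategy is to eliminate the change point at $\eta_k$ by comparing $\widetilde f^{s,e}$ to the CUSUM of an auxiliary signal $g$ defined by $g_i = f_{\eta_{k+1}}$ for $i \le \eta_k$ and $g_i = f_i$ for $i > \eta_k$. This $g$ has two convenient features: (a) $f - g$ is supported on $(s, \eta_k]$ and takes the single value $f_{\eta_k} - f_{\eta_{k+1}}$ of magnitude $\kappa_k$ there, and (b) $g$ is constant, equal to $f_{\eta_{k+1}}$, on the whole interval $(s, \eta_{k+1}]$. These two features are what make the comparison with $\widetilde f^{s,e}$ tractable.

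Using (a), substituting into \Cref{def-cusum} gives in closed form, for every $t \ge \eta_k$,
\[
\widetilde f^{s,e}_t - \widetilde g^{s,e}_t = \sqrt{\frac{e-t}{(e-s)(t-s)}}\,(\eta_k - s)(f_{\eta_k} - f_{\eta_{k+1}}).
\]
Taking absolute values at $t = \eta_k$ and bounding $(e-\eta_k)/(e-s) \le 1$ yields $|\widetilde f^{s,e}_{\eta_k} - \widetilde g^{s,e}_{\eta_k}| \le \sqrt{\eta_k - s}\,\kappa_k$. Taking instead $t = \eta_{k+1}$ and using $\eta_k - s \le c_1 \Delta \le c_1 (\eta_{k+1} - s)$ produces $|\widetilde f^{s,e}_{\eta_{k+1}} - \widetilde g^{s,e}_{\eta_{k+1}}| \le \sqrt{c_1}\sqrt{\eta_k - s}\,\kappa_k$.

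Using (b), the two terms in the CUSUM definition that contain $f_{\eta_{k+1}}$ can be combined, collapsing $\widetilde g^{s,e}_t$ into a one-parameter family on $(s, \eta_{k+1}]$:
\[
\widetilde g^{s,e}_t = A\sqrt{\frac{t-s}{(e-s)(e-t)}}, \qquad A := f_{\eta_{k+1}}(e - \eta_{k+1}) - \sum_{i=\eta_{k+1}+1}^e f_i.
\]
Since the prefactor is monotone in $t$, the ratio at the two change points satisfies
\[
\frac{|\widetilde g^{s,e}_{\eta_k}|}{|\widetilde g^{s,e}_{\eta_{k+1}}|} = \sqrt{\frac{(\eta_k - s)(e - \eta_{k+1})}{(\eta_{k+1} - s)(e - \eta_k)}} \le \sqrt{\frac{\eta_k - s}{\eta_{k+1} - s}} \le \sqrt{c_1},
\]
the final step using $\eta_{k+1} - s \ge \eta_{k+1} - \eta_k \ge \Delta$.

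Chaining the above by triangle inequalities produces
\[
|\widetilde f^{s,e}_{\eta_k}| \le |\widetilde g^{s,e}_{\eta_k}| + \sqrt{\eta_k-s}\,\kappa_k \le \sqrt{c_1}\,|\widetilde g^{s,e}_{\eta_{k+1}}| + \sqrt{\eta_k-s}\,\kappa_k \le \sqrt{c_1}\,|\widetilde f^{s,e}_{\eta_{k+1}}| + (1 + c_1)\sqrt{\eta_k-s}\,\kappa_k,
\]
which yields the advertised bound in the regime $c_1 \le 1$ in which the lemma will be applied. No step is genuinely hard; the only point requiring mild care is the algebraic collapse of $\widetilde g^{s,e}_t$ on $(s, \eta_{k+1}]$, which is where the factor $\sqrt{c_1}$ in front of $|\widetilde f^{s,e}_{\eta_{k+1}}|$ originates.
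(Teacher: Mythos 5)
Your proof is correct and follows essentially the same route as the paper's: both introduce the auxiliary signal $g$ that sets $g_i = f_{\eta_{k+1}}$ on $(s,\eta_k]$ so as to erase the change at $\eta_k$, compute the CUSUM difference $\widetilde f^{s,e}_t - \widetilde g^{s,e}_t$ in closed form for $t\ge\eta_k$, observe that $\widetilde g^{s,e}_t$ has no change point before $\eta_{k+1}$ so that $|\widetilde g^{s,e}_{\eta_k}| \le \sqrt{(\eta_k-s)(e-\eta_{k+1})/((\eta_{k+1}-s)(e-\eta_k))}\,|\widetilde g^{s,e}_{\eta_{k+1}}| \le \sqrt{c_1}\,|\widetilde g^{s,e}_{\eta_{k+1}}|$, and then close with two triangle inequalities. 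You make one step more explicit than the paper does: you derive the one-parameter form $\widetilde g^{s,e}_t = A\sqrt{(t-s)/((e-s)(e-t))}$ on $(s,\eta_{k+1}]$, whereas the paper simply invokes \Cref{lem-cov-7} (monotonicity before the first change point of $g$) and leaves the ratio computation implicit; your version is slightly cleaner. You also correctly flag that the final constant is $1+c_1$ rather than $2$, which agrees with the stated bound precisely in the regime $c_1\le 1$; the paper's displayed chain of inequalities implicitly needs the same restriction, and indeed in the only place the lemma is invoked (Step~1 of \Cref{lem-cov-12}) it is applied with a constant of the form $\min\{1,c_1^2\}/16 \le 1$, so nothing is lost.
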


\begin{proof}
Consider the sequence $\{g_t\}_{t=s+1}^e $ be such that 
	\[
		g_t = \begin{cases}
			f_{\eta_{r+1}}, & s +1\le  t\le \eta_{k}, \\
			f_t, & \eta_{k}+1 \le t \le e. 
		\end{cases}
	\]
	For any  $t\ge \eta_r$, it holds that
	\begin{align*}
		\widetilde f^{s,e}_{\eta_k} - \widetilde g^{s,e}_{\eta_k} = \sqrt { \frac{(e-s)-t}{(e-s)(t-s)} } (\eta_k-s) (f_{\eta_{k+1}} -f_{\eta_{k}})  \le \sqrt{\eta_k-s} \kappa_k.
	\end{align*}
	Thus, 
	\begin{align*}
		|\widetilde f^{s,e}_{\eta_k} | & \le |\widetilde g^{s,e}_{\eta_k}  |+  \sqrt{\eta_k-s} \kappa_k \le \sqrt { \frac{(\eta_k-s)  (e-\eta_{k+1})  }{   ( \eta_{k+1}-s)  (e-\eta_k)    } }|\widetilde g^{s,e}_{\eta_{k+1}} |+  \sqrt{\eta_k-s} \kappa_k \\
		& \le \sqrt { \frac{c_1\Delta }{ \Delta}}|\widetilde g^{s,e}_{\eta_{k+1}} |  +  \sqrt{\eta_k-s} \kappa_k \le \sqrt{c_1} |\widetilde f^{s,e}_{\eta_{k+1}} |  + 2\sqrt{\eta_k-s} \kappa_k,
	\end{align*}
	where the first inequality follows from the observation that the first change point of $g_t$ in $(s, e)$ is at $\eta_{k+1}$.	
\end{proof}

For a pair $(s,e)$ of positive integers with $s < e$, let $\mathcal{W}_d^{s,e}$ be the two dimensional linear subspace of $\mathbb{R}^{(e-s)}$ spanned by the vectors 
	\[
		u_1 = (\underbrace{1, \ldots, 1}_{d-s}, \underbrace{0, \ldots, 0}_{e-d})^{\top} \mbox{ and } u_2 = (\underbrace{0, \ldots, 0}_{d-s}, \underbrace{1, \ldots, 1}_{e-d})^{\top}.
	\]
	For clarity, in the lemma below, we will use $\langle \cdot ,\, \cdot \rangle $ to denote the inner product of two vectors in the Euclidean space.

\begin{lemma}\label{lem-cov-11}
For $x = (x_{s+1}, \ldots, x_e)^{\top} \in \mathbb{R}^{(e-s)}$, let $\p^{s,e}_d(x)$ be the projection of $x$ onto $\mathcal{W}^{s,e}_d$.
	\begin{enumerate}
		\item[(i)] The projection $\mathcal{P}^{s,e}_d(x)$ satisfies
		\[
			\p^{s,e}_d (x) = \frac{1}{e-s}\sum_{i=s+1}^e x_i+\langle x,\psi^{s,e}_d\rangle \psi^{s,e}_d, 
		\]
		where $\langle \cdot, \cdot \rangle$ is the inner product in Euclidean space, and $\psi^{s,e}_d = ( (\psi^{s,e}_d)_s, \ldots, (\psi^{s,e}_d)_{e-s})^{\top}$ with
		\[
			(\psi^{s,e}_d)_i=  \begin{cases}
				\sqrt\frac{e-d}{(e-s)(d-s)}, &  i = s+1, \ldots, d, \\
				-\sqrt\frac{d-s}{(e-s)(e-d)}, &  i = d+1, \ldots, e,
			\end{cases}
		\]
		i.e. the $i$-th entry of $\p^{s,e}_d (x)$ satisfies 
		\[
			\p^{s,e}_d (x)_i= \begin{cases}
				\frac{1}{d-s}\sum_{j=s+1}^d x_j, &  i = s+1, \ldots, d, \\
				\frac{1}{e-d}\sum_{j=d+1}^e x_j, & i = d+1, \ldots, e.
			\end{cases}  
		\]
	
	\item [(ii)] Let $\bar x =\frac{1}{e-s}\sum_{i=s+1}^e x_i$. Since $\langle \bar x,\psi^{s,e}_d \rangle=0$, it holds that
		\begin{equation} \label{eq:anova}
			\| x - \p^{s,e}_d(x) \|^2 = \| x-\bar x \|^2 - \langle x, \psi^{s,e}_d\rangle ^2.
		\end{equation}
	\end{enumerate}	
\end{lemma}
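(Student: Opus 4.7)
The plan is to establish the projection formula in (i) by exhibiting a convenient orthonormal basis of $\mathcal{W}^{s,e}_d$, and then to deduce (ii) by Pythagoras.

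First I would observe that $\mathcal{W}^{s,e}_d$ is two dimensional and contains the constant vector $\mathbf{1}=u_1+u_2$. Hence $\{(e-s)^{-1/2}\mathbf{1},\,\psi^{s,e}_d\}$ is a candidate orthonormal basis, provided that $\psi^{s,e}_d$ lies in $\mathcal{W}^{s,e}_d$, is a unit vector, and is orthogonal to $\mathbf{1}$. Membership in $\mathcal{W}^{s,e}_d$ is immediate since $\psi^{s,e}_d$ is a linear combination of $u_1$ and $u_2$. Unit norm follows from
\[
\|\psi^{s,e}_d\|^2=(d-s)\cdot\frac{e-d}{(e-s)(d-s)}+(e-d)\cdot\frac{d-s}{(e-s)(e-d)}=\frac{e-d}{e-s}+\frac{d-s}{e-s}=1,
\]
and orthogonality to $\mathbf{1}$ follows from
\[
\langle \psi^{s,e}_d,\mathbf{1}\rangle=(d-s)\sqrt{\tfrac{e-d}{(e-s)(d-s)}}-(e-d)\sqrt{\tfrac{d-s}{(e-s)(e-d)}}=\sqrt{\tfrac{(d-s)(e-d)}{e-s}}-\sqrt{\tfrac{(d-s)(e-d)}{e-s}}=0.
\]

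Having established the orthonormal basis, the standard projection formula yields
\[
\p^{s,e}_d(x)=\bigl\langle x,(e-s)^{-1/2}\mathbf{1}\bigr\rangle(e-s)^{-1/2}\mathbf{1}+\langle x,\psi^{s,e}_d\rangle\psi^{s,e}_d=\bar x\,\mathbf{1}+\langle x,\psi^{s,e}_d\rangle\psi^{s,e}_d,
\]
which is the identity in (i). The claim that the $i$-th entry of $\p^{s,e}_d(x)$ equals the local mean on the corresponding side of $d$ is then a direct computation: for $i\in\{s+1,\ldots,d\}$, the entry is $\bar x+\langle x,\psi^{s,e}_d\rangle\sqrt{(e-d)/((e-s)(d-s))}$; substituting the definition of $\langle x,\psi^{s,e}_d\rangle$ and simplifying the telescoping combination of $(d-s)^{-1}\sum_{j=s+1}^d x_j$ and $(e-d)^{-1}\sum_{j=d+1}^e x_j$ collapses to $(d-s)^{-1}\sum_{j=s+1}^d x_j$, and the analogous simplification handles the range $\{d+1,\ldots,e\}$.

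For (ii), I would decompose $x$ orthogonally as $x=\bar x\,\mathbf{1}+(x-\bar x\,\mathbf{1})$, where orthogonality holds because $\langle x-\bar x\,\mathbf{1},\mathbf{1}\rangle=0$ by definition of $\bar x$. Using the orthonormality of $(e-s)^{-1/2}\mathbf{1}$ and $\psi^{s,e}_d$ together with $\langle \bar x\,\mathbf{1},\psi^{s,e}_d\rangle=0$, Pythagoras gives $\|\p^{s,e}_d(x)\|^2=(e-s)\bar x^2+\langle x,\psi^{s,e}_d\rangle^2$, and applying Pythagoras once more to $x=\p^{s,e}_d(x)+(x-\p^{s,e}_d(x))$ yields
\[
\|x-\p^{s,e}_d(x)\|^2=\|x\|^2-(e-s)\bar x^2-\langle x,\psi^{s,e}_d\rangle^2=\|x-\bar x\,\mathbf{1}\|^2-\langle x,\psi^{s,e}_d\rangle^2,
\]
which is \eqref{eq:anova}. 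There is no real obstacle here; the content is essentially a change of basis, and the only mild care needed is in checking the algebraic identity that turns the orthonormal-basis expansion into the simple local-mean formula on each side of $d$.
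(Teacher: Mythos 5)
Your proof is correct and takes a genuinely different route from the paper's. The paper's proof is a one-liner: it displays the block-structured projection matrix for $\mathcal{W}^{s,e}_d$ (with $1/(d-s)$ in the top-left block and $1/(e-d)$ in the bottom-right block) and asserts that the results follow, leaving the reader to derive both the $\psi^{s,e}_d$-expansion in (i) and the identity \eqref{eq:anova} from that entrywise description. You instead construct the orthonormal basis $\{(e-s)^{-1/2}\mathbf{1},\,\psi^{s,e}_d\}$ of $\mathcal{W}^{s,e}_d$, verify membership, unit norm, and orthogonality by short computations, and then read off (i) directly from the orthonormal-basis expansion; (ii) then drops out of two applications of Pythagoras. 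Your route has the advantage of producing the inner-product form $\bar x\,\mathbf{1}+\langle x,\psi^{s,e}_d\rangle\psi^{s,e}_d$ as the primary object rather than as a consequence of the entrywise block formula, and it makes transparent that \eqref{eq:anova} is nothing but the Pythagorean decomposition through the nested subspaces $\mathrm{span}(\mathbf{1})\subset\mathcal{W}^{s,e}_d$; the paper's route is shorter to state but implicitly asks the reader to reproduce exactly the computations you carried out. Both are valid, and neither requires any ideas beyond elementary linear algebra.
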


\begin{proof}
The results hold following the fact that the projection matrix of subspace $\mathcal{W}^{s,e}_d$ is
	\[
		P^{s,e}_{\mathcal{W}^{s,e}_d} = \left(\begin{array}{cccccc}
			1/(d-s) & \cdots & 1/(d-s)& 0 & \cdots & 0 \\
			\vdots & \vdots & \vdots & \vdots & \vdots & \vdots \\
			1/(d-s) & \cdots & 1/(d-s) & 0 & \cdots & 0 \\
			0 & \cdots & 0 & 1/(e-d) & \cdots & 1/(e-d) \\
			\vdots & \vdots & \vdots & \vdots & \vdots & \vdots \\
			0 & \cdots & 0 & 1/(e-d)& \cdots & 1/(e-d)
		\end{array}\right).
	\]
\end{proof}

\begin{lemma}\label{lem-cov-12}
Under \Cref{assume:change}, let $(s_0, e_0)$ be an interval with $e_0 - s_0 \le C_R\Delta$ and contain at lest one change point $\eta_k$ such that 
	\[
		\eta_{k-1} \le s_0 \le \eta_k \le \ldots \le \eta_{k+q} \le e_0 \le \eta_{k+q+1}, \quad q\ge 0.
	\]
	Suppose that there exists $k'$ such that $\min\{\eta_{k'} - s_0,\, e_0 -\eta_{k'} \}\ge \Delta/16$.  Let $\kse= \max\{\kappa_p: \min\{ \eta_p -s_0 , e_0 -\eta_p \} \ge \Delta /16\}$.   Consider any generic $[s, e] \subset [s_0,e_0]$, satisfying
	\[
		\min\{ \eta_{k} -s_0 , e_0 -\eta_{k} \} \ge \Delta /16 \quad \text{for all } \eta_k \in[s,e].
	\]

Let $b \in \arg \max_{s < t < e}|\widetilde Y_{t}^{s,e}|$.  For some $c_1>0$ and $\gamma>0$ , suppose that
	\begin{align}
		& |\widetilde Y_{b}^{s,e}  |  \ge c_1 \kse \sqrt{\Delta},  \label{eq:wbs size of sample} \\
		& \sup_{s < t < e} |\widetilde Y_{t}^{s,e}   - \widetilde f_{t}^{s,e} | \le \gamma, \label{eq-wbs-lambda}
	\end{align}
	and
	\begin{equation}\label{eq:wbs noise 2}
		\sup_{s_1 < t < e_1} \frac{1}{\sqrt{e_1-s_1}}\left| \sum_{t=s_1+1}^{e_1} (Y_t - f_t )\right| \le  \gamma. 
	\end{equation}
	If there exists a sufficiently small $0 < c_2 < c_1/2$ such that
	\begin{equation}\label{eq:wbs noise}
		\gamma\le c_2\kse\sqrt \Delta,
	\end{equation}
	then there exists a change point $\eta_{k} \in (s, e)$  such that 
	\[
    	\min \{e-\eta_k,\eta_k-s\}  >  \Delta /4  \quad \text{and} \quad |\eta_{k} -b |\le C_3\gamma^2\kappa_k^{-2}.
	\]	
\end{lemma}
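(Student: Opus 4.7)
The plan is first to identify a well-separated true change point $\eta_k\in(s,e)$ carrying most of the CUSUM signal, and then to control $|b-\eta_k|$ via an ANOVA-based comparison of $(\widetilde Y^{s,e}_b)^2$ with $(\widetilde Y^{s,e}_{\eta_k})^2$. By \Cref{lem-cov-7}, $\sup_{s<t<e}|\widetilde f^{s,e}_t|$ is attained at some change point $\eta_k\in(s,e)$. The detection condition \eqref{eq:wbs size of sample} together with the uniform noise bound \eqref{eq-wbs-lambda} and the smallness of $\gamma$ from \eqref{eq:wbs noise} yield
\[
|\widetilde f^{s,e}_{\eta_k}|\;\ge\;|\widetilde Y^{s,e}_b|-\gamma\;\ge\;c_1\kse\sqrt{\Delta}-\gamma\;\ge\;(c_1/2)\kse\sqrt{\Delta}.
\]
If $\eta_k$ were within $\Delta/4$ of an endpoint of $(s,e)$, applying \Cref{lem-cov-18} (possibly iteratively, since $e-s\le C_R\Delta$ limits the number of change points in $[s,e]$ to a constant) together with \Cref{lem-cov-17} to the last change point in the chain would upper-bound $|\widetilde f^{s,e}_{\eta_k}|$ by a multiple of $\kse\sqrt{\Delta/4}$ plus corrections controlled via \Cref{lem-cov-16}; choosing $c_2$ in \eqref{eq:wbs noise} small enough produces a contradiction, hence $\min(\eta_k-s,\,e-\eta_k)>\Delta/4$.

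For the localisation, WLOG take $b=\eta_k+D$ with $D>0$, and set $I_1=(s,\eta_k]$, $I_2=(\eta_k,b]$, $I_3=(b,e]$. The optimality $(\widetilde Y^{s,e}_b)^2\ge(\widetilde Y^{s,e}_{\eta_k})^2$, expanded through two applications of the ANOVA identity of \Cref{lemma-pre}, collapses to
\[
\frac{|I_2|\,|I_3|}{|I_2|+|I_3|}\bigl(\bar Y_{I_2}-\bar Y_{I_3}\bigr)^2\;\ge\;\frac{|I_1|\,|I_2|}{|I_1|+|I_2|}\bigl(\bar Y_{I_1}-\bar Y_{I_2}\bigr)^2.
\]
Writing $\bar Y_I=\bar f_I+\bar\epsilon_I$ and using $|\bar\epsilon_I|\le\gamma/\sqrt{|I|}$ from \eqref{eq:wbs noise 2}, in the clean case where $I_2,I_3$ contain no further change points, $\bar f_{I_2}=\bar f_{I_3}$, so the left-hand side is at most $2\gamma^2$. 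On the right, $(x+y)^2\ge x^2/2-y^2$ with $x=\kappa_k$ and the bound $|I_1||I_2|/(|I_1|+|I_2|)\ge D/2$ (from $|I_1|\ge\Delta/4\ge D$) give a lower bound of $(D/2)(\kappa_k^2/2-2\gamma^2/D-2\gamma^2/|I_1|)$. Splitting on whether $D\ge 16\gamma^2/\kappa_k^2$ — in which case $2\gamma^2/D\le\kappa_k^2/8$ — the inequality reduces to $c\kappa_k^2 D\le 2\gamma^2$, giving $D\le C_3\gamma^2/\kappa_k^2$; the complementary case is immediate.

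The main obstacle will be the case where $I_2$ or $I_3$ contains additional change points, so that $\bar f_{I_2}\ne\bar f_{I_3}$ and the left-hand side is contaminated by signal rather than being pure noise. Here \Cref{lem-cov-16} is key: it bounds $|\bar f_I-f_{\eta_p}|$ by $C_R\kse$ on any sub-interval of length $\le C_R\Delta$, so that any additional signal contribution to the left-hand side is at most a constant multiple of $\kse^2$, which under the SNR assumption and \eqref{eq:wbs noise} remains asymptotically negligible compared with the leading $\kappa_k^2 D$ term on the right. The overall bookkeeping across the $O(C_R)$ potential intrusions, distributed over the auxiliary lemmas in the road map of \Cref{fig-rm}, completes the argument.
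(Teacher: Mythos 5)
The high-level strategy you describe for Step 1 (using Lemmas~\ref{lem-cov-17}, \ref{lem-cov-18}, \ref{lem-cov-16} to force the maximizing change point away from the endpoints of $(s,e)$) tracks the paper. The ANOVA reformulation you derive for the localisation step is also correct: starting from $(\widetilde Y^{s,e}_b)^2\ge(\widetilde Y^{s,e}_{\eta_k})^2$ and applying \eqref{eq-lemma2-1} twice does give
\[
\frac{|I_2||I_3|}{|I_2|+|I_3|}\bigl(\overline Y_{I_2}-\overline Y_{I_3}\bigr)^2\ \ge\ \frac{|I_1||I_2|}{|I_1|+|I_2|}\bigl(\overline Y_{I_1}-\overline Y_{I_2}\bigr)^2,
\]
and in the fully clean case your subsequent algebra is right. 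However, there are two genuine gaps in the argument as it stands.

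First, you never localize $b$ to within $o(\Delta)$ of $\eta_k$ before launching the ANOVA comparison, so you are not entitled to the assumption that $I_2=(\eta_k,b]$ contains no change points. This is exactly what the paper's Step~2 accomplishes via \Cref{lem-cov-8} (Venkatraman's quantitative decay estimate): it shows $b\in[\eta_k,\eta_k+\gamma\sqrt\Delta/\kse]$, which under \eqref{eq:wbs noise} is an interval of length $\le c_2\Delta$, so $I_2$ is guaranteed clean. Without this coarse step, $b$ could a priori lie past $\eta_{k+1}$ (the function $\widetilde f^{s,e}_t$ can be nearly flat between change points), and the entire case analysis that follows has no footing.

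Second, and more seriously, the fallback you propose for the case that $I_3$ contains further change points does not close the argument. By \Cref{lem-cov-16} the mean shift $\alpha=\overline f_{I_2}-\overline f_{I_3}$ satisfies $|\alpha|\le(C_R+1)\kse$, so the extra signal on the left-hand side is at most $\frac{|I_2||I_3|}{|I_2|+|I_3|}\,\alpha^2 \asymp D\,\kse^2$ (since $|I_2|=D\ll|I_3|$ after the coarse step). This is \emph{the same order in $D$} as the leading right-hand-side term $D\,\kappa_k^2$ (indeed $\kappa_k\le\kse$), so it is not ``asymptotically negligible'': the comparison reduces to $D\kse^2 + O(\gamma^2) \gtrsim D\kappa_k^2 - O(\gamma^2)$, which yields nothing. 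The paper avoids this by switching to the projection comparison $\|Y^{s,e}-\mathcal P^{s,e}_b(Y^{s,e})\|^2 \le \|Y^{s,e}-\mathcal P^{s,e}_{\eta_k}(f^{s,e})\|^2$ and decomposing the cross term $\langle\varepsilon^{s,e},\mathcal P_b(f^{s,e})-\mathcal P_{\eta_k}(f^{s,e})\rangle$ into three pieces $I$, $II$, $III$; each of those is a product of a noise average $\le\gamma$ and a \emph{difference} of mean-of-$f$ quantities that is $O(\frac{b-\eta_k}{|I|}\kse)$ or $O(\kse)$ on a window of length $b-\eta_k$, yielding bounds like $|b-\eta_k|\Delta^{-1/2}\gamma\kse$ and $\sqrt{b-\eta_k}\,\gamma\kse$. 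Crucially, these cross terms are all order $\gamma$ (not order $\kse$) and hence genuinely small relative to $|b-\eta_k|(\kse)^2$ once $|b-\eta_k|\gtrsim\gamma^2\kappa_k^{-2}$. Your symmetric ANOVA form instead pits a squared signal against a squared signal, and that contest is lost in general. To repair this you would need to restructure the comparison so that all the contaminating terms carry a factor of $\gamma$ rather than a factor of $\kse$.

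Minor point: even in your clean case, the $\Delta/4$ separation bound you assert in the first half does not follow directly with that constant from Lemmas~\ref{lem-cov-17} and~\ref{lem-cov-18}; the paper's own Step~1 only achieves $\min\{1,c_1^2\}\Delta/16$, so numerical bookkeeping on the constants is required.
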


\begin{proof}

Without loss of generality, assume that $\widetilde f_{b}^{s,e}>0$ and that $\widetilde f_t^{s,e} $ is locally decreasing at $b$.  Observe that there has to be a change point $\eta_k \in [s,b]$, or otherwise $\widetilde f_b^{s,e} >0 $  implies that  $\widetilde f_t^{s,e} $ is decreasing, as a consequence of  \Cref{lem-cov-18}.  Thus, if $s\le \eta_k\le b \le  e $, then 
	\begin{align}
		\widetilde f_{\eta_k}^{s,e}\ge \widetilde f_{b}^{s,e} \ge |\widetilde Y^{s,e}_b  | -\gamma  \ge c_1 \kse \sqrt{\Delta} -c_2 \kse \sqrt \Delta  \ge  (c_1/2) \kse \sqrt {\Delta}. \label{eq:wbs size of change point}
	\end{align}
	Observe that $e-s\le e_0-s_0\le C_R\Delta $ and that $(s, e)$ has to contain at least one change point or otherwise $ |\widetilde f^{s,e}_{\eta_k} |  =0 $ which contradicts  \eqref{eq:wbs size of change point}.
	
We decompose the rest of the proof in four steps.  {\bf Step 1} shows that $\eta_k$ is far enough away from end points $s$ and $e$.  {\bf Step 2} utilizes \Cref{lem-cov-8} -- the machinery originally developed for BS in \cite{venkatraman1992consistency} -- to show that $b$ is not far away from $\eta_k$.  This is actually a consistent estimator, but not optimal.  {\bf Step 3}	brings in the WBS techniques to refine the error bound, which is {\it de facto} optimal.  The proof is completed in {\bf Step 4}.

\vskip 3mm
\noindent {\bf Step 1.} 
In this step, we are to show that $\min\{ \eta_k -s , e -\eta_k \} \ge \min\{1, c_1^2 \}\Delta /16$. 

Suppose $\eta_k$ is the only change point in $(s, e)$.  So $\min\{ \eta_k -s , e -\eta_k \} \ge   \min\{1, c_1^2\}\Delta /16$ must hold or otherwise it follows from \Cref{lem-cov-17}, we have
	\[
	    |\widetilde f^{s,e}_{\eta_k} | < \frac{c_1 }{4} \kappa_k
	    \sqrt{\Delta} \le \frac{c_1}{2}  \kse \sqrt{\Delta},
	\]
	which contradicts \eqref{eq:wbs size of change point}.

Suppose $(s, e)$ contains at least two change points. Then $ \eta_k -s \le   \min\{1,c_1^2  \}\Delta /16 $ implies that $\eta_k$ is the first change point in $[s,e]$.   Therefore,
	\begin{align*}
		|\widetilde f^{s,e}_{\eta_k}| \le \frac{1}{4}  |  \widetilde f^{s,e}_{\eta_{k+1}}| +2\kappa_k  \sqrt {\eta_k -s}   \le \frac{1}{4}\max_{s < t < e}|  \widetilde f^{s,e}_{t}|+\frac{c_1}{2}\kappa_k \sqrt {\Delta}  \\
		\le \frac{1}{4}|  \widetilde Y^{s,e}_{b}| +\gamma +\frac{c_1}{2}\kse  \sqrt {\Delta}  \le\frac{3}{4}|  \widetilde Y^{s,e}_{b}| +\gamma< |  \widetilde Y^{s,e}_{b}|  -\gamma,
	\end{align*}
	where the first inequality follows from \Cref{lem-cov-18}, the fourth inequality follows from \eqref{eq:wbs size of sample}, and the last inequality holds when $c_2$ is sufficiently small.  This contradicts with \eqref{eq:wbs size of change point}.

\vskip 3mm
\noindent{\bf Step 2.} By \Cref{lem-cov-8} there exists $d$ such that $d\in[\eta_{k},\eta_{k} + \gamma \sqrt \Delta (\kse)^{-1}]$ and that $\widetilde f_{\eta_{k}}^{s,e} -\widetilde f_{d}^{s,e}> 2\gamma$.  For the sake of contradiction, suppose $b \ge d$.  Then 
	\[
		\widetilde f_{b}^{s,e} \le  \widetilde f_{d}^{s,e}  <  \widetilde f_{\eta_{k}}^{s,e} -2\gamma  \le \max_{s < t < e}|\widetilde f_{t}^{s,e} | -2\gamma \le \max_{s < t < e} |\widetilde Y^{s,e}_t| +\gamma-2 \gamma  = |\widetilde Y^{s,e}_b| -\gamma,
	\]
	where the first inequality follows from \Cref{lem-cov-7}, which ensures that  $\widetilde f_{t}^{s,e}$ is decreasing on $[\eta_{k},b]$ and $d \in [\eta_{k}, b]$.  This is a contradiction to \eqref{eq:wbs size of change point}.  Thus $ b\in [\eta_{k},\eta_{k} + \gamma \sqrt \Delta (\kse)^{-1} ]$. 

\vskip 3mm
\noindent {\bf Step 3.}  Let $f^{s,e} =(f_{s+1},\ldots, f_e)^{\top} \in \mathbb{R}^{(e-s)} $ and $Y^{s,e}=(Y_{s+1}, \ldots, Y_e)^{\top} \in \mathbb{R}^{(e-s)}$.  By the definition of $b$, it holds that
	\[
		\bigl\|Y^{s,e} - \mathcal{P}^{s,e}_{b}(Y^{s,e})\bigr\|^2 \leq \bigl \|Y^{s,e} - \mathcal{P}^{s,e}_{\eta_k}(Y^{s,e})\bigr\|^2  \leq \bigl\|Y^{s,e} - \mathcal{P}_{\eta_k}^{s,e}(f^{s,e})\bigr\|^2.
	\]
	For the sake of contradiction, throughout the rest of this argument suppose that, for some sufficiently large constant $C_3 > 0$ to be specified,
	\begin{align}\label{eq:wbs contradict assume}
		\eta_k + \max\{C_3\gamma^2\kappa_k^{-2},\delta\}< b.
	\end{align}
	(This will of course imply that $\eta_k + \max\{C_3\gamma^2 (\kse)^{-2},\delta\}< b$).  We will show that this leads to the bound
	\begin{align}\label{eq:WBS sufficient}
		\bigl\|Y^{s,e} - \mathcal{P}_{b}^{s,e} (Y^{s,e})\bigr\|^2 > \bigl\|Y^{s,e} - \mathcal{P}^{s,e}_{\eta_k}(f^{s,e})\bigr\|^2,	
    \end{align}
	which is a contradiction. 

To derive \eqref{eq:WBS sufficient} from \eqref{eq:wbs contradict assume}, we note that  $\min\{ e-\eta_k,\eta_k-s\}\ge  \min\{1, c_1^2 \}\Delta/16$ and that $| b- \eta_k| \le  \gamma \sqrt \Delta (\kse)^{-1}$ implies that 
	\begin{align}\label{eq:wbs size of intervals}
		\min\{ e-b, b-s\} \ge   \min\{1, c_1^2 \}\Delta /16 -\gamma \sqrt \Delta (\kse)^{-1} \ge \min\{1, c_1^2 \}\Delta /32 ,
	\end{align}
	where the last inequality follows from \eqref{eq:wbs noise} and holds for an appropriately small $c_2>0$.

\Cref{eq:WBS sufficient} is in turn implied by 
	\begin{equation} \label{eq:WBS sufficient 2}
		2\langle \varepsilon^{s,e} ,\p_b(Y^{s,e}) - \p _{\eta_k}(f^{(s,e)})\rangle < \|f^{s,e}-\p_b(f^{s,e}) \|^2 -\|f^{s,e}-\p_{\eta_k}(f^{s,e}) \|^2,
	\end{equation}
	where $\varepsilon^{s,e}= Y^{s,e}-f^{s,e}$.  By \eqref{eq:anova}, the right hand side of \eqref{eq:WBS sufficient 2} satisfies the relationship with sufficiently small absolute constants $c, c' > 0$,
	\begin{align*}
		& \|f^{s,e}-\p_b(f^{s,e}) \|^2 -\|f^{s,e}-\p_{\eta_k}(f^{s,e}) \|^2 = \langle f^{s,e} , \psi_{\eta_k} \rangle^2 -\langle f^{s,e} , \psi_{b} \rangle^2  \\
		= &  (\tf_{\eta_k})^2 -(\tf_{b})^2  \ge ( \tf_{\eta_k} - \tf_{b} ) | \tf_{\eta_k}| \ge  c |d-\eta_k|  (\tf_{\eta_k})^2 \Delta^{-1}\\
		\ge &  c' |d-\eta_k |(\kse)^2,
	\end{align*}
	where \Cref{lem-cov-8} and \eqref{eq:wbs size of change point} are used in the second and third inequalities.  The left hand side of \eqref{eq:WBS sufficient 2} can  in turn be rewritten as  
	\begin{equation} \label{eq:perturbations}
		2\langle \varepsilon^{s,e} ,\p_b(X^{s,e}) - \p _{\eta_k}(f^{s,e})\rangle = 2 \langle \varepsilon^{s,e}, \p_b(X^{s,e})-\p_b(f^{s,e})\rangle + 2  \langle \varepsilon^{s,e}, \p_b(f^{s,e})-\p_{\eta_k} (f^{s,e})\rangle .
	\end{equation}

The second term on the right hand side  of the previous display can be decomposed as
	\begin{align*}
		\langle \varepsilon^{s,e}  , \p_b (f ^{s,e})-\p_{\eta_k} (f^{s,e} )\rangle &  =  \left( \sum_{i=s+1}^{\eta_k} +\sum_{i={\eta_k}+1}^b +\sum_{i=b+1}^e\right) \varepsilon^{s,e}_i \left( \p_b(f^{s,e})_i  - \p_{\eta_k}(f^{s,e})_i  \right)\\
 		&= I +II +III.
	\end{align*}
	In order to bound the terms $I$, $II$ and $III$, observe that, since $e-s\le e_0-s_0\le  C_R\Delta$, the interval $[s,e]$ must contain at most $C_R+1$ change points. 

\vskip 3mm
\noindent {\bf Step 3.1.} We can write 
	\[
	I = \sqrt{{\eta_k} -s}\left (\frac{1}{\sqrt{{\eta_k} -s}} \sum_{i=s+1}^{\eta_k} \varepsilon^{s,e}_i\right) \left( \frac{1}{b-s} \sum_{i=s+1}^b f_i-\frac{1}{{\eta_k}-s} \sum_{i=s+1}^{\eta_k} f_i\right). 
	\]
	Thus,
	\begin{align*} 
		& \left |\frac{1}{b-s} \sum_{i=s+1}^b f_i  -\frac{1}{{\eta_k}-s} \sum_{i=s+1}^{\eta_k} f_i \right | = \left|  \frac{ (\eta_k -s ) (\sum_{i=s+1}^{\eta_k}  f_i +\sum_{i=\eta_k+1}^{b}  f_i)  - (b-s) \sum_{i=s+1}^{\eta_k} f_i }{(b-s)(\eta_k -s)}   \right|\\
		=& \left|  \frac{ (\eta_k -b ) \sum_{i=s+1}^{\eta_k}  f_i + (\eta_k-s)\sum_{i=\eta_k+1}^{b}  f_i)  }{(b-s)(\eta_k -s)}   \right| =   \left|  \frac{ (\eta_k -b ) \sum_{i=s+1}^{\eta_k}  f_i + (\eta_k-s) (b-\eta_k) f_{\eta_k+1})  }{(b-s)(\eta_k -s)}   \right|\\
		=& \frac{b-\eta_k}{b-s}\left|  - \frac{1}{\eta_k -s } \sum_{i=s+1}^{\eta_k}  f_i+ f_{\eta_{k+1}}  \right| \le \frac{b-\eta_k}{b-s}  (C_R +1 )\kse
	\end{align*}
	where \Cref{lem-cov-16} is used in the last inequality.  It follows from \Cref{eq:wbs noise 2} that
	\begin{align*}
		| I|\le  \sqrt{\eta_k-s}\gamma  \frac{|b-\eta_k|}{b-s}(C_R+1) \kse \le \frac{4\sqrt{2}}{\min\{1, c_1\}}|b-\eta_k | \Delta^{-1/2} \gamma  (C_R+1) \kse,
	\end{align*}
	where \eqref{eq:wbs size of intervals} is used in the last inequality.

\vskip 3mm
\noindent {\bf Step 3.2.}  For the second term $II$, we have that 
	\begin{align*} 
		|II| =&\left| \sqrt{{b-\eta_k} }\left (\frac{1}{\sqrt{{b-\eta_k}}} \sum_{i={\eta_k+1}}^d \varepsilon^{s,e}_i\right) \left (\frac{1}{b-s} \sum_{i=s+1}^b f_i-\frac{1}{e-{\eta_k}} \sum_{i={\eta_{k}+1 } }^e f_i\right)  \right|\\
		\le&\sqrt {b-{\eta_k}}    \gamma \left( \left| f_{\eta_k} -f_{\eta_{k+1} }\right| +  \left|\frac{1}{b-s} \sum_{i=s+1 }^b f_i- f_{\eta_k}  \right| + \left |\frac{1}{e-{\eta_k}} \sum_{i={\eta_k+1}}^e f_i - f_{\eta_{k+1} } \right| \right)\\
		\le & \sqrt {b-{\eta_k}} (  \kse + (C_R +1)\kse +(C_R+1)\kse),
	\end{align*}
	where the first inequality follows from \eqref{eq:wbs size of intervals} and \eqref{eq:wbs noise 2}, and the second inequality from \Cref{lem-cov-16}.

\vskip 3mm
\noindent {\bf Step 3.3.}  Finally, we have that
	\begin{align*}
		III = \sqrt{e-b}\left(\frac{1}{e-b}\sum_{i = b+1}^e \varepsilon^{s, e}_i\right)\left(\frac{1}{e-\eta_k}\sum_{i=\eta_k + 1}^e f_i - \frac{1}{e-b}\sum_{i=b+1}^e f_i\right).	
	\end{align*}
	Therefore,
	\begin{align*}
		|III| \leq \sqrt{e-b}\gamma \frac{b-\eta_k}{e-b}(C_R+1)\kse	 \leq \frac{4\sqrt{2}}{\min\{1, c_1\}}|b-\eta_k | \Delta^{-1/2} \gamma  (C_R+1) \kse.
	\end{align*}

\vskip 3mm

\noindent {\bf Step 4.} Using the first part of \Cref{lem-cov-11}, the first term on the right hand side of \eqref{eq:perturbations} can be bounded as
	\[
		\langle \varepsilon^{s,e}, \p_d(X^{s,e})-\p_d(f^{s,e})\rangle\le \gamma^2.
	\]
	Thus \eqref{eq:WBS sufficient 2} holds if 
	\[
		|b-\eta_k|(\kse)^2 \ge  C  \max\left\{  |b-\eta_k| \Delta^{-1/2} \gamma \kse  ,\quad  \sqrt {b-{\eta_k}}    \gamma \kse  ,\quad \gamma^2 \right\}.
	\]
	Since $\gamma \le c_3\sqrt \Delta \kappa  $, the first inequality holds. The second inequality follows from $|b-\eta_k| \ge C_3\gamma^2 (\kappa_k)^{-2} \ge C_3\gamma^2 (\kse)^{-2}$, as assumed in \eqref{eq:wbs contradict assume}.  This completes the proof.
\end{proof}

\bibliographystyle{ims}
\bibliography{citations}

\begin{thebibliography}{46}
\expandafter\ifx\csname natexlab\endcsname\relax\def\natexlab#1{#1}\fi
\expandafter\ifx\csname url\endcsname\relax
  \def\url#1{\texttt{#1}}\fi
\expandafter\ifx\csname urlprefix\endcsname\relax\def\urlprefix{URL }\fi
\providecommand{\eprint}[2][]{\url{#2}}

\bibitem[{Aston and Kirch(2014)}]{AstonKirch2014}
\textsc{Aston, J. A.~D.} and \textsc{Kirch, C.} (2014).
\newblock Efficiency of change point tests in high dimensional settings.
\newblock \textit{arXiv preprint arXiv: 1409.1771}.

\bibitem[{Aue et~al.(2009)Aue, H\"{o}mann, Horv\'{a}th and
  Reimherr}]{AueEtal2009}
\textsc{Aue, A.}, \textsc{H\"{o}mann, S.}, \textsc{Horv\'{a}th, L.} and
  \textsc{Reimherr, M.} (2009).
\newblock Break detection in the covariance structure of multivariate nonlinear
  time series models.
\newblock \textit{The Annals of Statistics}, \textbf{37} 4046--4087.

\bibitem[{Avanesov and Buzun(2016)}]{avanesov2016change}
\textsc{Avanesov, V.} and \textsc{Buzun, N.} (2016).
\newblock Change-point detection in high-dimensional covariance structure.
\newblock \textit{arXiv preprint arXiv:1610.03783}.

\bibitem[{Baranowski et~al.(2016)Baranowski, Chen and
  Fryzlewicz}]{BaranowskiEtal2016}
\textsc{Baranowski, R.}, \textsc{Chen, Y.} and \textsc{Fryzlewicz, P.} (2016).
\newblock Narrowest-{O}ver-{T}hreshold detection of multiple change-points and
  change-point-like feature.
\newblock \textit{arXiv preprint arXiv: 1609.00293}.

\bibitem[{Boysen et~al.(2009)Boysen, Kempe, Liebscher, Munk and
  Wittich}]{BoysenEtal2009}
\textsc{Boysen, L.}, \textsc{Kempe, A.}, \textsc{Liebscher, V.}, \textsc{Munk,
  A.} and \textsc{Wittich, O.} (2009).
\newblock Consistencies and rates of convergence of jump-penalized least
  squares estimators.
\newblock \textit{The Annals of Statistics}, \textbf{37} 157--183.

\bibitem[{Chan and Walther(2013)}]{chan2013}
\textsc{Chan, H.~P.} and \textsc{Walther, G.} (2013).
\newblock Detection with the scan and the average likelihood ratio.
\newblock \textit{Statistica Sinica}, \textbf{1} 409--428.

\bibitem[{Cho(2015)}]{Cho2015}
\textsc{Cho, H.} (2015).
\newblock Change-point detection in panel data via double cusum statistic.
\newblock \textit{Electronic Journal of Statistics} in press.

\bibitem[{Cho and Fryzlewicz(2015)}]{ChoFryzlewicz2015}
\textsc{Cho, H.} and \textsc{Fryzlewicz, P.} (2015).
\newblock Multiple change-point detection for high-dimensional time series via
  {S}parsified {B}inary {S}egmentation.
\newblock \textit{Journal of the Royal Statistical Society: Series B
  (Statistical Methodology)}, \textbf{77} 475--507.

\bibitem[{Davis et~al.(2006)Davis, Lee and Rodriguez-Yam}]{DavisEtal2006}
\textsc{Davis, R.~A.}, \textsc{Lee, T. C.~M.} and \textsc{Rodriguez-Yam, G.~A.}
  (2006).
\newblock Structural break estimation for nonstationary time series models.
\newblock \textit{Journal of the American Statistical Association},
  \textbf{101} 223--239.

\bibitem[{D\"{u}mbgen and Spokoiny(2001)}]{dumbgen2001multiscale}
\textsc{D\"{u}mbgen, L.} and \textsc{Spokoiny, V.~G.} (2001).
\newblock Multiscale testing of qualitative hypotheses.
\newblock \textit{Annals of Statistics} 124--152.

\bibitem[{D{\"u}mbgen and Walther(2008)}]{dumbgen2008multiscale}
\textsc{D{\"u}mbgen, L.} and \textsc{Walther, G.} (2008).
\newblock Multiscale inference about a density.
\newblock \textit{The Annals of Statistics}, \textbf{36} 1758--1785.

\bibitem[{Eichinger and Kirch(2018)}]{EichingerKirch2018}
\textsc{Eichinger, B.} and \textsc{Kirch, C.} (2018).
\newblock A mosum procedure for the estimation of multiple random change
  points.
\newblock \textit{Bernoulli}, \textbf{24} 526--564.

\bibitem[{Enikeeva et~al.(2018)Enikeeva, Munk and Werner}]{enikeeva2018bump}
\textsc{Enikeeva, F.}, \textsc{Munk, A.} and \textsc{Werner, F.} (2018).
\newblock Bump detection in heterogeneous gaussian regression.
\newblock \textit{Bernoulli}, \textbf{24} 1266--1306.

\bibitem[{Fan and Guan(2017)}]{FanGuan2017}
\textsc{Fan, Z.} and \textsc{Guan, L.} (2017).
\newblock Approximate $ l_0 $-penalized estimation of piecewise-constant
  signals on graphs.
\newblock \textit{arXiv preprint}.

\bibitem[{Frick et~al.(2014)Frick, Munk and Sieling}]{FrickEtal2014}
\textsc{Frick, K.}, \textsc{Munk, A.} and \textsc{Sieling, H.} (2014).
\newblock Multiscale change point inference.
\newblock \textit{Journal of the Royal Statistical Society: Series B
  (Statistical Methodology)}, \textbf{76} 495--580.

\bibitem[{Friedrich et~al.(2008)Friedrich, Kempe, Liebscher and
  Winkler}]{FriedrichEtal2008}
\textsc{Friedrich, F.}, \textsc{Kempe, A.}, \textsc{Liebscher, V.} and
  \textsc{Winkler, G.} (2008).
\newblock Complexity penalized m-estimation: Fast computation.
\newblock \textit{Journal of Computational and Graphical Statistics},
  \textbf{17} 201--204.

\bibitem[{Fryzlewicz(2014)}]{fryzlewicz2014wild}
\textsc{Fryzlewicz, P.} (2014).
\newblock Wild binary segmentation for multiple change-point detection.
\newblock \textit{The Annals of Statistics}, \textbf{42} 2243--2281.

\bibitem[{Gao et~al.(2017)Gao, Han and Zhang}]{GaoEtal2017}
\textsc{Gao, C.}, \textsc{Han, F.} and \textsc{Zhang, C.~H.} (2017).
\newblock Minimax risk bounds for piecewise constant models.
\newblock \textit{arXiv preprint arXiv:1705.06386.}

\bibitem[{Gibberd and Roy(2017)}]{GibberdRoy2017}
\textsc{Gibberd, A.~J.} and \textsc{Roy, S.} (2017).
\newblock Multiple changepoint estimation in high-dimensional {G}aussian
  graphical models.
\newblock \textit{arXiv preprint}.

\bibitem[{James et~al.(1987)James, James and Siegmund}]{JamesEtal1987}
\textsc{James, B.}, \textsc{James, K.~L.} and \textsc{Siegmund, D.} (1987).
\newblock Tests for a change-point.
\newblock \textit{Biometrika}, \textbf{74} 71--83.

\bibitem[{Jeng et~al.(2012)Jeng, Cai and Li}]{jeng2012simultaneous}
\textsc{Jeng, X.~J.}, \textsc{Cai, T.~T.} and \textsc{Li, H.} (2012).
\newblock Simultaneous discovery of rare and common segment variants.
\newblock \textit{Biometrika}, \textbf{100} 157--172.

\bibitem[{Jirak(2015)}]{Jirak2015}
\textsc{Jirak, M.} (2015).
\newblock Uniform change point tests in high dimension.
\newblock \textit{The Annals of Statistics}, \textbf{43} 2451--2483.

\bibitem[{Killick et~al.(2012)Killick, Fearnhead and Eckley}]{KillickEtal2012}
\textsc{Killick, R.}, \textsc{Fearnhead, P.} and \textsc{Eckley, I.~A.} (2012).
\newblock Optimal detection of changepoints with a linear computational cost.
\newblock \textit{Journal of the American Statistical Association},
  \textbf{107} 1590--1598.

\bibitem[{Lavielle(1999)}]{Lavielle1999}
\textsc{Lavielle, M.} (1999).
\newblock Detection of multiple changes in a sequence of dependent variables.
\newblock \textit{Stochastic Processes and their Applications}, \textbf{83}
  79--102.

\bibitem[{Lavielle and Moulines(2000)}]{LavielleMoulines2000}
\textsc{Lavielle, M.} and \textsc{Moulines, E.} (2000).
\newblock Least-squares estimation of an unknown number of shifts in a time
  series.
\newblock \textit{Journal of Time Series Analysis}, \textbf{21} 33--59.

\bibitem[{Li et~al.(2017)Li, Guo and Munk}]{LiEtal2017}
\textsc{Li, H.}, \textsc{Guo, Q.} and \textsc{Munk, A.} (2017).
\newblock Multiscale change-point segmentation: Beyond step functions.
\newblock \textit{arXiv preprint arXiv: 1708.03942}.

\bibitem[{Liebscher and Winkler(1999)}]{LiebscherWinkler1999}
\textsc{Liebscher, V.} and \textsc{Winkler, G.} (1999).
\newblock A potts model for segmentation and jump-detection.
\newblock In \textit{Proceedings S4G International Conference on Stereology,
  Spatial Statistics and Stochastic Geometry, Prague}, vol.~21.

\bibitem[{Maidstone et~al.(2017)Maidstone, Hocking, Rigaill and
  Fearnhead}]{MaidstoneEtal2017}
\textsc{Maidstone, R.}, \textsc{Hocking, T.}, \textsc{Rigaill, G.} and
  \textsc{Fearnhead, P.} (2017).
\newblock On optimal multiple changepoint algorithms for large data.
\newblock \textit{Statistics and Computing}, \textbf{27} 519--533.

\bibitem[{Page(1954)}]{Page1954}
\textsc{Page, E.~S.} (1954).
\newblock Continuous inspection schemes.
\newblock \textit{Biometrika}, \textbf{41} 100--115.

\bibitem[{Rigaill(2010)}]{Rigaill2010}
\textsc{Rigaill, G.} (2010).
\newblock Pruned dynamic programming for optimal multiple change-point
  detection.
\newblock \textit{arXiv preprint arXiv:1004.0887}.

\bibitem[{Rinaldo(2009)}]{Rinaldo2009}
\textsc{Rinaldo, A.} (2009).
\newblock Properties and refinements of the fused lasso.
\newblock \textit{The Annals of Statistics}, \textbf{37} 2292--2952.

\bibitem[{Scott and Knott(1974)}]{ScottKnott1974}
\textsc{Scott, A.~J.} and \textsc{Knott, M.} (1974).
\newblock A cluster analysis method for grouping means in the analysis of
  variance.
\newblock \textit{Biometrics} 507--512.

\bibitem[{Tibshirani et~al.(2005)Tibshirani, Saunders, Rosset, Zhu and
  Knight}]{TibshiraniEtal2005}
\textsc{Tibshirani, R.}, \textsc{Saunders, M.}, \textsc{Rosset, S.},
  \textsc{Zhu, J.} and \textsc{Knight, K.} (2005).
\newblock Sparsity and smoothness via the fused lasso.
\newblock \textit{Journal of the Royal Statistical Society: Series B
  (Statistical Methodology)}, \textbf{67} 91--108.

\bibitem[{Tickle et~al.(2018)Tickle, Eckley, Fearnhead and
  Haynes}]{TickleEtal2018}
\textsc{Tickle, S.~O.}, \textsc{Eckley, I.~A.}, \textsc{Fearnhead, P.} and
  \textsc{Haynes, K.} (2018).
\newblock Parallelisation of a common changepoint detection method.
\newblock \textit{arXiv preprint arXiv:1810.03591}.

\bibitem[{Tsybakov(2009)}]{Tsybakov2009}
\textsc{Tsybakov, A.~B.} (2009).
\newblock \textit{Introduction to Nonparametric Estimation}.
\newblock Springer.

\bibitem[{Venkatraman(1992)}]{venkatraman1992consistency}
\textsc{Venkatraman, E.~S.} (1992).
\newblock \textit{Consistency results in multiple change-point problems}.
\newblock Ph.D. thesis, Stanford University.

\bibitem[{Vershynin(2010)}]{vershynin2010introduction}
\textsc{Vershynin, R.} (2010).
\newblock Introduction to the non-asymptotic analysis of random matrices.
\newblock \textit{arXiv preprint arXiv:1011.3027}.

\bibitem[{Vostrikova(1981)}]{vostrikova1981detection}
\textsc{Vostrikova, L.} (1981).
\newblock Detection of the disorder in multidimensional random-processes.
\newblock \textit{Doklady Akademii Nauk SSSR}, \textbf{259} 270--274.

\bibitem[{Wald(1945)}]{Wald1945}
\textsc{Wald, A.} (1945).
\newblock Sequential tests of statistical hypotheses.
\newblock \textit{The Annals of Mathematical Statistics}, \textbf{16} 117--186.

\bibitem[{Wang et~al.(2017)Wang, Yu and Rinaldo}]{WangEtal2017}
\textsc{Wang, D.}, \textsc{Yu, Y.} and \textsc{Rinaldo, A.} (2017).
\newblock Optimal covariance change point detection in high dimension.
\newblock \textit{arXiv preprint}.

\bibitem[{Wang et~al.(2018)Wang, Yu and Rinaldo}]{WangEtal2018}
\textsc{Wang, D.}, \textsc{Yu, Y.} and \textsc{Rinaldo, A.} (2018).
\newblock Optimal change point detection and localization in sparse dynamic
  networks.
\newblock \textit{arXiv preprint arXiv:1809.09602}.

\bibitem[{Wang and Samworth(2018)}]{wang2016high}
\textsc{Wang, T.} and \textsc{Samworth, R.~J.} (2018).
\newblock High-dimensional changepoint estimation via sparse projection.
\newblock \textit{Journal of the Royal Statistical Society: Series B
  (Statistical Methodology)}.

\bibitem[{Yao(1988)}]{Yao1988}
\textsc{Yao, Y.~C.} (1988).
\newblock Estimating the number of change-points via schwarz' criterion.
\newblock \textit{Statistics \& Probability Letters}, \textbf{6} 181--189.

\bibitem[{Yao and Au(1989)}]{YaoAu1989}
\textsc{Yao, Y.-C.} and \textsc{Au, S.-T.} (1989).
\newblock Least-squares estimation of a stop function.
\newblock \textit{Sankhy\={a}: The Indian Journal of Statistics, Series A}
  370--381.

\bibitem[{Yao and Davis(1986)}]{YaoDavis1986}
\textsc{Yao, Y.~C.} and \textsc{Davis, R.~A.} (1986).
\newblock The asymptotic behavior of the likelihood ratio statistic for testing
  a shift in mean in a sequence of independent normal variates.
\newblock \textit{Sankhy\={a}: The Indian Journal of Statistics, Series A}
  339--353.

\bibitem[{Yu(1997)}]{yu1997assouad}
\textsc{Yu, B.} (1997).
\newblock \textit{Festschrift for Lucien Le Cam}, vol. 423, chap. Assouad,
  {F}ano, and {L}e {C}am.
\newblock Springer Science \& Business Media, 435.

\end{thebibliography}

\end{document}